\DeclareMathAlphabet{\mathpzc}{OT1}{pzc}{m}{it}
\newtheorem{theorem}{Theorem}[section]
\newtheorem*{claim*}{Claim}
\newtheorem{lemma}[theorem]{Lemma}
\newtheorem{lem}[theorem]{Lemma}
\newtheorem{cor}[theorem]{Corollary}
\newtheorem{Thm}[theorem]{Theorem}
\newtheorem{thm}[theorem]{Theorem}
\theoremstyle{definition}
\newtheorem{Def}[theorem]{Definition}
\theoremstyle{remark}
\newtheorem{rmk}[theorem]{Remark}
\newtheorem{Rmk}[theorem]{Remark}
\numberwithin{equation}{section}
\newcommand{\bi}{\begin{itemize}}
\newcommand{\ei}{\end{itemize}}
\newcommand{\op}{\operatorname}
\newcommand{\Out}{\op{Out}}
\newcommand{\bb}{\mathbb}
\newcommand{\be}{\begin{equation}}
\newcommand{\ee}{\end{equation}}
\newcommand{\Ga}{\Gamma}
\newcommand{\R}{\mathbb R}
\newcommand{\N}{\mathbb N}
\newcommand{\ga}{\gamma}
\newcommand{\La}{\Lambda}
\newcommand{\inte}{\op{int}}
\newcommand{\ba}{\backslash}
\newcommand{\cal}{\mathcal}
\newcommand{\br}{\mathbb R}
\newcommand{\SO}{\op{SO}}
\newcommand{\Isom}{\op{Isom}}
\newcommand{\F}{\cal F}
\newcommand{\bH}{\mathbb H}
\newcommand{\G}{\Gamma}
\newcommand{\m}{\mathsf{m}}
\newcommand{\T}{\op{T}}
\newcommand{\e}{\varepsilon}
\newcommand{\BMS}{\op{BMS}}
\renewcommand{\L}{\mathcal L}
\newcommand{\fa}{\mathfrak a}
\renewcommand{\S}{\mathbb S}
\newcommand{\so}{\SO^\circ(n,1)}
\newcommand{\s}{\sigma}
\begin{document}

\title[Hausdorff dimension of Directional limit sets]{Hausdorff dimension of \\Directional limit sets for self-joinings of \\hyperbolic manifolds}

\author{Dongryul M. Kim}
\address{Department of Mathematics, Yale University, New Haven, CT 06511}
\email{dongryul.kim@yale.edu}

\author{Yair N. Minsky}
\address{Department of Mathematics, Yale University, New Haven, CT 06511}\email{yair.minsky@yale.edu}

\author{Hee Oh}
\address{Department of Mathematics, Yale University, New Haven, CT 06511}
\email{hee.oh@yale.edu}
\thanks{Minsky and Oh are partially supported by the NSF}

\begin{abstract}
The classical result of Patterson and Sullivan says that for a non-elementary convex cocompact subgroup $\Gamma<\operatorname{SO}^\circ (n,1)$, $n\ge 2$, the Hausdorff dimension of the limit set of $\Gamma$ is equal to the critical exponent of $\Gamma$. In this paper, we generalize this result for self-joinings of convex cocompact groups in two ways.

Let $\Delta$ be a finitely generated group and  $\rho_i:\Delta\to \operatorname{SO}^\circ(n_i,1)$ be
a convex cocompact faithful representation of $\Delta$ for $1\le i\le k$. Associated to $\rho=(\rho_1, \cdots, \rho_k)$,
 we consider the following self-joining subgroup of $\prod_{i=1}^k \operatorname{SO}(n_i,1)$: 
$$\Gamma=\left(\prod_{i=1}^k\rho_i\right)(\Delta)=\{(\rho_1(g), \cdots, \rho_k(g)):g\in \Delta\} .$$
\begin{enumerate}
 \item Denoting by $\Lambda\subset \prod_{i=1}^k \mathbb{S}^{n_i-1}$ the limit set
  of $\Gamma$, we first
  prove that 
$$\text{dim}_{\mathrm{H}} \Lambda=\max_{1\le i\le k} \delta_{\rho_i}$$ 
where $\delta_{\rho_i}$ is the critical exponent of the subgroup $\rho_{i}(\Delta)$.
\item Denoting by $\La_u\subset \La$ the $u$-directional  limit set for each $u=(u_1, \cdots, u_k)$ in the interior of the limit cone of $\Gamma$, we obtain that for $k\le 3$, 
$$ \frac{\psi_\Gamma(u)}{\max_i u_i }\le \text{dim}_{\mathrm{H}} \Lambda_u \le   \frac{\psi_\Gamma(u)}{\min_i u_i  }$$
where $\psi_\Gamma:\br^k\to \mathbb{R}\cup\{-\infty\}$ is the growth indicator function of $\Gamma$. 
\end{enumerate}

\end{abstract}

\maketitle
\tableofcontents
\section{Introduction}
Let $k \ge 1$, and $G = \prod_{i = 1}^k G_i$ where $G_i = \SO^{\circ}(n_i, 1)$ for $n_i \ge 2$. Consider the hyperbolic $n_i$-space $( X_i=\bH^{n_i}, d_i)$  of constant curvature $-1$.  The Lie group $G$ is the identity component of $\Isom (X)$, where $(X,d)$
is the Riemannian product  $X=\prod_{i=1}^k X_i$ with
\be \label{Riem}
d((x_i), (y_i)) = \sqrt{\sum_{i = 1}^k d_i(x_i, y_i)^2}.
\ee
 The Furstenberg boundary of $G$ is then the Riemannian product $\F = \prod_{i = 1}^k \S^{n_i - 1}$ of the geometric boundaries $\partial X_i\simeq \S^{n_i-1}$.
We consider a particular class of discrete subgroups of $G$, constructed as follows. 
Let $\Delta$ be a finitely generated group. For $1 \le i \le k$, let $\rho_i : \Delta \to G_i$ be a 
convex cocompact faithful representation of $\Delta$. 
Let 
$$\G = \left(\prod_{i = 1}^k \rho_i \right)(\Delta) =\{ (\rho_1(\sigma), \cdots, \rho_k(\sigma))\in G: \sigma\in \Delta\}.$$

We will assume that each $\rho_i(\Delta)$ is Zariski dense in $G_i$ and no two $\rho_i$'s are conjugate to each other; this implies that $\G$ is Zariski dense in $G$.
The  quotient
$\Gamma \backslash X$ is a locally symmetric Riemannian manifold of rank $k$, which we  call a self-joining of a hyperbolic manifold. Unless $k=1$ and $\rho_1(\Delta)$ is a cocompact lattice of $G_1$, $\Ga\ba X$ is of infinite volume.

For each $i$, we fix a basepoint $o_i \in X_i$.  Denote by $\La_{\rho_i}\subset\S^{n_i - 1} $
the limit set of $\rho_i(\Delta)$, which is the set of all accumulation points of the orbit $\rho_i(\Delta)o_i$ in the compactification $X_i\cup \S^{n_i - 1}$. We also denote by $\delta_{\rho_i}$ the critical exponent of
${\rho_i}(\Delta)$, which is 
 the abscissa of convergence of the Poincar\'e series $\sum_{\sigma\in \Delta}e^{-s d_i(\rho_i(\sigma) o_i, o_i)}$ (that is, the infimum of the set of $s$ for which the series converges). These two notions are independent of the choice of $o_i\in X_i$.
A well-known theorem of Patterson \cite{Pa} and Sullivan \cite{Su} says that $\delta_{\rho_i}$  is equal to the Hausdorff dimension
of the limit set  $\La_{\rho_i}$:
$$\delta_{\rho_i}=\dim \La_{\rho_i} .$$

The main aim of this paper is to investigate a higher rank analogue of this theorem. 
Let $o = (o_1, \cdots, o_k) \in X$. The limit set of $\Ga$ is the set of all accumulation points of an orbit $\Ga o $ in $\F$:
\be\label{dll} \La=\left\{ (\xi_1, \cdots, \xi_k)\in \F: \begin{matrix}
\text{$\exists$ a sequence $\sigma_\ell\in \Delta$ s.t. $\forall$ $1 \le i \le k$,} \\
\xi_i=\lim_{\ell\to \infty} \rho_i(\sigma_\ell)(o_i)
\end{matrix} \right\} .\ee

The {Hausdorff dimension} of a subset $S\subset \F$, which will be denoted by $\dim S$, is computed with respect to the Riemannian product metric of the spherical metrics on $\S^{n_i-1}$, $1\le i\le k$.
 
\subsection*{Hausdorff dimension of the limit set} 

	Our first result is the following (Theorem \ref{m0}):
\begin{thm}\label{main} We have
  \be\label{in2}  \dim{} \La =\max_{1\le i\le k}
  \delta_{\rho_i}.\ee
\end{thm}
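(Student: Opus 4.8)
The plan is to establish the two inequalities $\dim\Lambda\ge\max_i\delta_{\rho_i}$ and $\dim\Lambda\le\max_i\delta_{\rho_i}$ separately; the first is soft, and the second is a shadow‑covering argument. \emph{Lower bound.} For each $i$ the coordinate projection $\pi_i:\F\to\S^{n_i-1}$ is $1$‑Lipschitz for the product metric \eqref{Riem} restricted to $\F$, hence does not increase Hausdorff dimension. From the description \eqref{dll} one checks that $\pi_i(\Lambda)=\La_{\rho_i}$ exactly: any $\xi_i\in\La_{\rho_i}$ is $\lim_\ell\rho_i(\sigma_\ell)o_i$ for some $\sigma_\ell\in\Delta$, and since $\rho_i(\Delta)$ is discrete the $\sigma_\ell$ must leave every finite subset of $\Delta$, so by properness of each $\rho_j(\Delta)$‑action together with compactness of $\F$ a subsequence makes $\rho_j(\sigma_\ell)o_j$ converge in $\S^{n_j-1}$ for every $j$, producing a point of $\Lambda$ over $\xi_i$. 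Thus $\dim\Lambda\ge\dim\La_{\rho_i}=\delta_{\rho_i}$ by the Patterson--Sullivan theorem recalled above, and we take the maximum over $i$.

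\emph{Upper bound: the covering.} Convex cocompactness of each $\rho_i$ means $\Delta$ is word hyperbolic and every orbit map $\rho_i:(\Delta,|\cdot|_\Delta)\to(X_i,d_i)$ is a quasi‑isometric embedding; fix $R$ larger than all the resulting Morse constants (stability of quasigeodesics in $X_i$), and let $M$ bound the $d_i$‑displacement of a generator of $\Delta$ or its inverse, uniformly in $i$. Let $f_i:\partial\Delta\to\La_{\rho_i}$ be the induced boundary homeomorphism and $\mathrm{pt}_x:=(f_1(x),\dots,f_k(x))\in\Lambda$, so that $\Lambda=\{\mathrm{pt}_x:x\in\partial\Delta\}$. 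For $\sigma\in\Delta$ set $O_\sigma:=\prod_{i=1}^k O^i_R(\rho_i(\sigma)o_i)\subset\F$, where $O^i_R(y)\subset\S^{n_i-1}$ is the shadow from $o_i$ of the $R$‑ball about $y$ in $X_i$. I will use two standard rank‑one facts: (i) $\operatorname{diam}O^i_R(y)\ll_R e^{-d_i(o_i,y)}$ in the spherical metric, whence $\operatorname{diam}O_\sigma\ll e^{-\min_i d_i(o_i,\rho_i(\sigma)o_i)}$; and (ii) if $\sigma$ lies on a word geodesic ray from the identity to $x\in\partial\Delta$, then by the Morse lemma $\rho_i(\sigma)o_i$ lies within $R$ of the geodesic ray from $o_i$ to $f_i(x)$ for every $i$, so $f_i(x)\in O^i_R(\rho_i(\sigma)o_i)$ and hence $\mathrm{pt}_x\in O_\sigma$. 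Now for $T>0$ put $F_T:=\{\sigma\in\Delta:\min_i d_i(o_i,\rho_i(\sigma)o_i)\in[T,T+M]\}$. Along any word geodesic ray toward $x\in\partial\Delta$ the quantity $\min_i d_i(o_i,\rho_i(\sigma)o_i)$ equals $0$ at the identity, tends to $\infty$, and changes by at most $M$ per step, so at some vertex it lands in $[T,T+M]$; by (ii) the point $\mathrm{pt}_x$ then lies in $O_\sigma$ for that $\sigma\in F_T$. Hence $\{O_\sigma:\sigma\in F_T\}$ covers $\Lambda$, and by (i) every $O_\sigma$ with $\sigma\in F_T$ has diameter $\ll e^{-T}$.

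\emph{Upper bound: counting and conclusion.} The decisive estimate is that for every $\varepsilon>0$, $\#F_T\ll_\varepsilon e^{(\max_i\delta_{\rho_i}+\varepsilon)T}$. This holds because $F_T\subset\bigcup_{i=1}^k\{\sigma:d_i(o_i,\rho_i(\sigma)o_i)\le T+M\}$, while for each $i$ convergence of the Poincar\'e series $\sum_\sigma e^{-s\,d_i(o_i,\rho_i(\sigma)o_i)}$ at $s=\delta_{\rho_i}+\varepsilon$ forces $\#\{\sigma:d_i(o_i,\rho_i(\sigma)o_i)\le r\}\ll_\varepsilon e^{(\delta_{\rho_i}+\varepsilon)r}$. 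Therefore, for any $s>\max_i\delta_{\rho_i}$, choosing $\varepsilon$ with $\max_i\delta_{\rho_i}+\varepsilon<s$,
\[
\sum_{\sigma\in F_T}(\operatorname{diam}O_\sigma)^s\ \ll_\varepsilon\ e^{(\max_i\delta_{\rho_i}+\varepsilon)T}\,e^{-sT}\ \xrightarrow[\,T\to\infty\,]{}\ 0,
\]
while $\sup_{\sigma\in F_T}\operatorname{diam}O_\sigma\to0$. Hence $\mathcal H^s(\Lambda)=0$, so $\dim\Lambda\le s$, and letting $s\downarrow\max_i\delta_{\rho_i}$ gives $\dim\Lambda\le\max_i\delta_{\rho_i}$, which combined with the lower bound proves \eqref{in2}.

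\emph{Main obstacle.} The essential choice — and the source of the maximum in \eqref{in2} — is to organize the elements of $\Delta$ by the value of $\min_i d_i(o_i,\rho_i(\sigma)o_i)$ rather than by any single factor or by $\max_i$: this is exactly what makes all the covering boxes $O_\sigma$ simultaneously small (diameter $\ll e^{-T}$) while keeping their number bounded, via the union bound, by the largest critical exponent. The remaining technical points are the precise statements of facts (i) and (ii) in each $X_i$ — the exponential shadow‑diameter estimate and the Morse lemma for the quasigeodesics $n\mapsto\rho_i(\sigma_n)o_i$ — together with the (automatic) observation that the hypothesis ``$\sigma$ lies on a word geodesic to $x$'' is a condition on $\Delta$ alone and therefore holds in all $k$ factors at once.
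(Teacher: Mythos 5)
Your proposal is correct, and its skeleton is the same Sullivan-type argument the paper uses: the lower bound comes from the $1$-Lipschitz coordinate projections (onto $\La_{\rho_i}$) together with $\dim\La_{\rho_i}=\delta_{\rho_i}$, and the upper bound from covering $\La$ by product shadows about orbit points whose diameters are governed by $e^{-\min_i d_i(o_i,\rho_i(\sigma)o_i)}$, so that the surviving exponent is $\delta_{\min}\le\max_i\delta_{\rho_i}$. Where you genuinely diverge is in how the covering is justified and bookkept. The paper outsources conicality to the Anosov literature: it uses $\La=\La_{\mathsf c}$ (citing \cite[Prop.~7.4]{LO}), decomposes $\La=\bigcup_N\La_N$ into limsups of shadows $O_N(o,\ga o)$, and kills the $s$-dimensional Hausdorff measure of each $\La_N$ via the vanishing tail of the convergent series $\sum_\ga e^{-s\min\mu(\ga)}$ for $s>\delta_{\min}$. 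You instead re-derive the needed uniform conicality from first principles—word geodesics in the hyperbolic group $\Delta$, the boundary maps $f_i:\partial\Delta\to\La_{\rho_i}$, and the Morse lemma—and then cover scale by scale with the ``$\min_i d_i$-sphere'' $F_T$, replacing the tail-of-series step by an orbital counting bound obtained from a union bound over the factors and convergence of each Poincar\'e series at $\delta_{\rho_i}+\e$. The trade-off: the paper's version is shorter and uses one fixed series, while yours is more self-contained (a single shadow radius $R$, no countable union over $N$) at the cost of invoking the quasi-isometric-embedding and boundary-extension package for convex cocompact representations. The one step you assert rather than prove is $\La=\{(f_1(x),\dots,f_k(x)):x\in\partial\Delta\}$, in particular the inclusion of $\La$ in the image, which needs the continuous extension of the orbit maps to $\partial\Delta$ and a subsequence argument; this is standard for convex cocompact groups and plays exactly the role of the paper's appeal to $\La=\La_{\mathsf c}$, so it is a reasonable citation rather than a gap.
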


Note that $\max_{1\le i\le k}
  \delta_{\rho_i}=\delta_{\min}$ where
   $\delta_{\min}$ denotes  the abscissa of the convergence of the series $\sum_{(\ga_1, \cdots, \ga_k) \in \Ga} e^{-s \min_i d_{i}(\ga_i o_i, o_i)}$ (see the proof of Theorem \ref{m0}).
   If  $\delta$
   denotes the critical exponent of $\Ga$ with respect to the Riemannian metric $d$ on $X$, 
   then  $\sqrt k \delta \le \delta_{\min}$ and moreover 
   $\sqrt{k}\delta=\delta_{\min}$ if and only if $\rho_i$'s are all conjugate to each other \cite{KMO}.
 It is therefore interesting to note that for $k\ge 2$, $\dim \La$ is not in general equal to $\delta$, in contrast to $k=1$ case.

  \subsection*{Limit cone and Growth indicator function}
   We also obtain estimates on the Hausdorff dimension of  directional  limit sets of $\Gamma$.  To state the estimates, we need to recall the notion of the Cartan projection, the limit cone of $\Ga$ and the growth indicator function of $\Ga$.

For $g = (g_1, \cdots, g_k) \in G$, the Cartan projection of $g$ is a vector-valued distance function: $$\mu(g) = (d_1(g_1 o_1, o_1), \cdots, d_k(g_k o_k, o_k)) \in \br_{\ge 0}^k;$$ note that the standard Euclidean norm $\| \mu(g) \|$ is equal to $d(go, o)$.

 The limit cone $\L$ of $\Ga$ is defined as the asymptotic cone of $\mu(\Ga)$, i.e.,
 $$\L=\{\lim_{i\to \infty} t_i \mu(\ga_i) \in \br_{\ge 0}^k: t_i\to 0, \ga_i\in \Ga\}.$$
  This notion was introduced by Benoist, who also showed that $\L$ is a convex cone with non-empty interior \cite{Ben}.
 
Following Quint \cite{Quint1}, the growth indicator function $\psi_\Ga : \br^k \to\bb R\cup\{-\infty\}$  is defined as follows: for an open cone $\cal C$ in $\br^k$, let $\tau_{\cal C}$ denote the abscissa of convergence of  $\sum_{\ga\in\Ga,\,\mu(\ga)\in\cal C}e^{-s d(\ga o, o)}$. Now 
for any non-zero $u\in \br^k$, let \begin{equation}\label{grow3}\psi_{\Gamma}(u):=\|u\|
\inf_{u\in\cal C} \tau_{\cal C}\end{equation}
where the infimum is taken over all open cones $\cal C$ containing $u$, and let $\psi_\Gamma(0)=0$.
It is immediate that
 $\psi_\Ga=-\infty$ outside $\L$ and Quint \cite{Quint1} showed that $\psi_\Ga$ is a concave upper semi-continuous function satisfying
 $$\cal L=\{\psi_\Ga\ge 0\}\quad \text{ and } \quad \psi_\Ga >0 \quad\text{ on $\inte \L$}.$$

\subsection*{Hausdorff dimension of the directional limit sets}
For a vector $u = (u_1, \cdots, u_k) \in \br_{>0}^k$, a point $(\xi_1, \cdots, \xi_k)\in \cal F$
 is called  a {\it $u$-directional} limit point of $\Ga$ if 
 the geodesic ray 
 $$\{(\xi_1({tu_1}), \cdots, \xi_k({tu_k})):t\ge 0\}$$ accumulates on $\Gamma\ba X$, where
 $\{\xi_i(t):t\ge 0\}$ denotes a unit speed geodesic
 in $X_i = \bH^{n_i}$ toward $\xi_i \in \mathbb S^{n_i-1}$.
 We denote by $$\La_u \subset \La$$ the set of all $u$-directional  limit points of $\Ga$; note that $\La_u$ depends only on the direction of $u$ and it follows easily from the definition of $\La_u$ that $\La_u=\emptyset$ for $u \notin \L$. For $k=1$, the directional limit set is precisely 
 the conical limit set.
 In a higher rank setting,
 the notion of directional limit sets was first considered by Burger \cite{Bu} in the product of two rank one groups and
 then in \cite{BLLO} in general.

We obtain the following estimates on the Hausdorff dimension of directional limit sets in terms of the growth indicator function.
\begin{Thm} \label{main2} Assume that $k \le 3$. 
For any $u = (u_1, \cdots, u_k) \in \inte \L$, we have
 $$ \frac{\psi_\Ga(u)}{\max_i u_i} \le   \dim \La_u \le \frac{\psi_\Ga(u)}{\min_i u_i}. $$
In particular, if $(1, \cdots, 1) \in \inte \L$, then $$\dim \La_{(1, \cdots, 1)} = \psi_{\Ga}(1, \cdots, 1).$$
\end{Thm}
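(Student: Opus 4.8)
The plan is to prove the two inequalities separately; the upper bound works for all $k$, and the single genuinely hard point lies in the lower bound, which is where $k\le 3$ is used. Throughout, for $\gamma=(\rho_1(\s),\dots,\rho_k(\s))\in\Ga$ and $R>0$ I write $O_R(\gamma):=\prod_{i=1}^k O_R(o_i,\rho_i(\s)o_i)\subset\F$ for the product of the shadows in $\S^{n_i-1}$ of the $R$-balls about $\rho_i(\s)o_i$ seen from $o_i$; each factor has diameter comparable to $e^{-\mu_i(\gamma)}$, so $\op{diam}O_R(\gamma)\asymp e^{-\min_i\mu_i(\gamma)}$ in the product metric, with constants depending only on $R$. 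For the upper bound, if $\xi\in\La_u$ then unwinding the definition gives $\s_\ell\in\Delta$ and $t_\ell\to\infty$ with $d\big((\xi_i(t_\ell u_i))_i,\gamma_\ell o\big)\le C$ for $\gamma_\ell=(\rho_i(\s_\ell))_i$ and a uniform $C$; the triangle inequality then gives $\mu(\gamma_\ell)=t_\ell u+O(C)$ and $\xi\in O_R(\gamma_\ell)$ for $R=R(C)$. Hence for any open cone $\mathcal C\ni u$ in $\br_{>0}^k$ and any $N$, the family $\{O_R(\gamma):\mu(\gamma)\in\mathcal C,\ \|\mu(\gamma)\|\ge N\}$ covers $\La_u$ by sets of diameter $\lesssim e^{-cN}\to 0$. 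Fix $s>\psi_\Ga(u)/\min_i u_i$; since $\inf_{v\in\mathcal C}\min_i v_i/\|v\|\to\min_i u_i/\|u\|$ and, by \eqref{grow3}, $\tau_{\mathcal C}\to\psi_\Ga(u)/\|u\|$ as $\mathcal C\downarrow\br_{>0}u$, we may choose $\mathcal C$ so small that $s\cdot\inf_{v\in\mathcal C}\min_i v_i/\|v\|\ge\tau_{\mathcal C}+\eta$ for some $\eta>0$. Then
\[
\sum_{\mu(\gamma)\in\mathcal C,\ \|\mu(\gamma)\|\ge N}\big(\op{diam}O_R(\gamma)\big)^s\ \lesssim\ \sum_{\mu(\gamma)\in\mathcal C,\ \|\mu(\gamma)\|\ge N}e^{-s\min_i\mu_i(\gamma)}\ \le\ \sum_{\mu(\gamma)\in\mathcal C,\ \|\mu(\gamma)\|\ge N}e^{-(\tau_{\mathcal C}+\eta)\|\mu(\gamma)\|},
\]
a tail of a convergent series, which tends to $0$ as $N\to\infty$; so $\mathcal H^s(\La_u)=0$, and letting $s\downarrow\psi_\Ga(u)/\min_i u_i$ finishes the upper bound.

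For the lower bound, since $u\in\inte \L$ and $\psi_\Ga$ is concave with $\psi_\Ga(u)>0$, I fix a linear form $\psi$ tangent to $\psi_\Ga$ at $u$, i.e.\ $\psi\ge\psi_\Ga$ on $\br^k$ and $\psi(u)=\psi_\Ga(u)$. Using Zariski density and convex cocompactness one produces, in the manner of Quint and Patterson--Sullivan, a $(\Ga,\psi)$-conformal density $\{\nu_x:x\in X\}$ on $\F$ supported on $\La$, whose Radon--Nikodym cocycle is $\psi$ composed with the $\br^k$-valued Busemann cocycle, together with the shadow estimate $\nu_o(O_R(\gamma))\asymp e^{-\psi(\mu(\gamma))}$, uniform over $\gamma$ with $\mu(\gamma)$ in a fixed cone compactly inside $\inte \L$ and $R$ large; I take these as available inputs. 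For $\gamma$ with $\mu(\gamma)$ also within bounded distance of the ray $\br_{>0}u$ one has $\psi(\mu(\gamma))=\big(\max_i\mu_i(\gamma)/\max_i u_i\big)\psi_\Ga(u)+O(1)$. The mass distribution principle then reduces matters to producing a Borel set $\La_u^{\,\circ}\subset\La_u$ with $\nu_o(\La_u^{\,\circ})>0$ and a constant $D$ so that each $\xi\in\La_u^{\,\circ}$ lies deep inside shadows $O_R(\gamma)$ with $\mu(\gamma)$ near $\br_{>0}u$, the attained values of $\max_i\mu_i(\gamma)$ forming a set of gaps $\le D$. Indeed, for small $r$ one chooses such $\gamma$ with $\max_i\mu_i(\gamma)\in[\log\tfrac1r,\log\tfrac1r+D]$, and since $\xi$ is deep in the shadow whose factor radii are $\gtrsim e^{-D}r$ one gets $B(\xi,r)\subset O_{R'}(\gamma)$ for $R'=R'(R,D)$, whence $\nu_o(B(\xi,r))\lesssim e^{-\psi(\mu(\gamma))}\asymp r^{\psi_\Ga(u)/\max_i u_i}$, giving $\dim\La_u\ge\dim\La_u^{\,\circ}\ge\psi_\Ga(u)/\max_i u_i$.

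The content, and the place where $k\le 3$ enters, is thus the existence of $\La_u^{\,\circ}$ — equivalently, a quantitative, bounded-gap form of conservativity of the $u$-directional translation flow on $\Ga\ba G$ carrying the $\psi$-Bowen--Margulis--Sullivan measure $\m_\psi$ built from $\{\nu_x\}$: a generic point for $\m_\psi$ has its $u$-orbit recurring to a compact set, and projecting to $\F$ yields $\nu_o$-a.e.\ $\xi\in\La_u$ with controlled return times. By a Borel--Cantelli analysis of these returns, conservativity is governed by the divergence of $\sum_\gamma\nu_o(O_R(\gamma))$ over $\gamma$ with $\mu(\gamma)$ within bounded distance of $\br_{>0}u$; grouping by $\|\mu(\gamma)\|\in[n,n+1]$, applying the shadow estimate, and invoking the precise counting of Cartan projections available for convex cocompact (hence Anosov) self-joinings — the number of relevant $\gamma$ with $\|\mu(\gamma)\|\in[n,n+1]$ is $\asymp e^{\psi_\Ga(u)n/\|u\|}\,n^{-(k-1)/2}$, the polynomial factor being the Gaussian local-limit correction for the fluctuations of $\mu(\gamma)$ transverse to $\br_{>0}u$ — and using that $\psi$ is \emph{tangent} to $\psi_\Ga$ at $u$ so the exponential rates cancel exactly, this series is comparable to $\sum_{n\ge 1}n^{-(k-1)/2}$, which diverges precisely when $k\le 3$. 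I expect this to be the main obstacle: besides the precise counting input, one must upgrade bare conservativity of the (generically infinite) measure $\m_\psi$ to the bounded-gap recurrence defining $\La_u^{\,\circ}$, a quantitative recurrence argument that uses that the series above diverges robustly. For $k\ge 4$ the series converges, so the flow is dissipative and $\nu_o(\La_u)=0$, and this route genuinely breaks down — consistently with the hypothesis.

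Finally, when $u=(1,\dots,1)\in\inte \L$ we have $\min_i u_i=\max_i u_i=1$, so the two inequalities coincide and give $\dim\La_{(1,\dots,1)}=\psi_\Ga(1,\dots,1)$.
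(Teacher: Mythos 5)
Your upper bound is essentially correct and is a mild variant of the paper's argument: you work directly with the cone quantities $\tau_{\cal C}$ from the definition \eqref{grow3}, where the paper instead uses the tangent form $\psi_u$ and the convergence of $\sum_\ga e^{-s\psi_u(\mu(\ga))}$ for $s>1$; both give $\psi_\Ga(u)/\min_i u_i$. One point you gloss over: the recurrence constant $C$ (hence $R$) in the definition of $\La_u$ depends on $\xi$, so you must decompose $\La_u$ into the countably many pieces where $\liminf_t d(\xi(tu),\Ga o)\le N$ and use countable stability of Hausdorff dimension — exactly what the paper does with the sets $\La_N^*(u)$.

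The lower bound, however, has a genuine gap, and it is not only the unproved counting/Borel--Cantelli step you flag. Your scheme needs a Borel set $\La_u^{\,\circ}\subset\La_u$ with $\nu_u(\La_u^{\,\circ})>0$ whose points are recurrent in direction $u$ with \emph{bounded gaps}; this is equivalent to the ray $\xi(tu)$ staying within a bounded distance of $\Ga o$ for all large $t$ (only such uniform recurrence gives the bound $\nu_u(B(\xi,r))\lesssim r^{\delta_u/M_u}$ for \emph{all} small $r$, which is what Rogers--Taylor requires). For $k=2,3$ no such positive-measure set exists. Under the trivialization $\Omega\cong Z\times\ker\psi_u$ coming from the reparametrization theorem, staying at bounded distance from $\Ga o$ means the $\ker\psi_u$-coordinate stays bounded along the $a_{tu}$-orbit; but $\m^{\BMS}_u$, locally $\nu_u\otimes\nu_u\otimes\op{Leb}$, is an \emph{infinite} conservative ergodic measure for $a_{tu}$ when $k\le 3$, so by the Hopf ratio ergodic theorem almost every orbit visits $\{\|\text{fiber coordinate}\|\ge R\}$ at arbitrarily large times for every $R$; equivalently, the zero-mean H\"older cocycle $F$ in \eqref{hatk} is not a coboundary (the limit cone has nonempty interior by Zariski density), so transverse excursions are a.s.\ unbounded. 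Hence the set of bounded-gap directional points is $\nu_u$-null and your mass-distribution input cannot be produced this way. The paper's resolution is precisely to weaken bounded recurrence to \emph{sublinear} excursions: Theorem \ref{zo} and Corollary \ref{ae} give $K^\dagger_u(g,t)=o(t)$ for $\nu_u$-a.e.\ $\xi$, which yields $\nu_u(B(\xi,r))\le C\, r^{(1-\e)\delta_u/M_u}$ for every $\e>0$ (Theorem \ref{p4}); applying Rogers--Taylor and letting $\e\to 0$ gives the lower bound, with $k\le 3$ entering only through $\nu_u(\La_u)=1$ (Theorem \ref{dic}, quoted from \cite{BLLO}) — a result you would in any case need to cite rather than re-derive from the local-limit counting asymptotic, which is itself a substantial unproved input in your sketch.
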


See
Theorem \ref{mainprop} for the upper bound, which is proved for all $k\ge 1$,
and Corollary \ref{upp} for the lower bound.

\subsection*{Symmetric growth indicator functions} 
By the concavity of $\psi_\Ga$ and the strict convexity of the norm ball
$\{\|v\|\le 1\}$, 
there exists a unique unit vector $u_\Ga$, called  the direction of maximal growth, such that
$ \psi_{\Ga}(u_\Ga)=\sup_{\|u\|=1} \psi_\Ga(u).$
By \cite[Coro. III.1.4]{Quint1}, $$\delta = \psi_{\Ga}(u_\Ga).$$

In general, it is hard to determine $u_\Ga$.
However when the growth indicator function $\psi_\Ga$ is symmetric, that is,
it is invariant under all permutations in coordinates,
the concavity of $\psi_\Ga$ implies that $u_\Ga= {1 \over \sqrt{k}}(1, \cdots , 1)$ and
hence by Theorem \ref{main2}, we obtain the following {\it identity}:
\begin{cor}
 If $1\le k\le 3$ and $\psi_\Ga$ is symmetric, then  $u_\Ga= {1 \over \sqrt{k}}(1, \cdots , 1)$ and
\be\label{explicit} \dim \La_{u_\Ga}= {\sqrt k} {\delta}.\ee \end{cor}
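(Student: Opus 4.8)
The plan is to deduce the corollary directly from Theorem \ref{main2} together with two structural facts about $\psi_\Ga$ already recorded in the excerpt: that it is concave and upper semi-continuous on $\L$, and that $\delta = \psi_\Ga(u_\Ga)$ where $u_\Ga$ is the unique unit vector of maximal growth. So the only real content is identifying $u_\Ga$ when $\psi_\Ga$ is symmetric, and then plugging $u = (1,\dots,1)$ into Theorem \ref{main2}.

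First I would argue that $u_\Ga = \tfrac1{\sqrt k}(1,\dots,1)$ under the symmetry hypothesis. Let $\sigma$ be any permutation of the coordinates; by hypothesis $\psi_\Ga \circ \sigma = \psi_\Ga$, so $\sigma(u_\Ga)$ is again a unit vector achieving $\sup_{\|v\|=1}\psi_\Ga(v)$. By the uniqueness of the maximizer (which, as noted in the excerpt, follows from concavity of $\psi_\Ga$ and strict convexity of the Euclidean ball), $\sigma(u_\Ga) = u_\Ga$ for every permutation $\sigma$. A unit vector fixed by all coordinate permutations must be a scalar multiple of $(1,\dots,1)$, and the nonnegativity of $u_\Ga$ (it lies in $\L \subset \br_{\ge 0}^k$) forces $u_\Ga = \tfrac1{\sqrt k}(1,\dots,1)$. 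In particular $(1,\dots,1) \in \inte\L$, so the hypotheses of Theorem \ref{main2} are met for this direction.

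Next, apply Theorem \ref{main2} with $u = (1,\dots,1)$. Here $\max_i u_i = \min_i u_i = 1$, so the two bounds coincide and give $\dim\La_{(1,\dots,1)} = \psi_\Ga(1,\dots,1)$. Since $\La_u$ depends only on the direction of $u$, we have $\La_{u_\Ga} = \La_{(1,\dots,1)}$, hence $\dim\La_{u_\Ga} = \psi_\Ga(1,\dots,1)$. Finally, $\psi_\Ga$ is $1$-homogeneous by \eqref{grow3}, so $\psi_\Ga(1,\dots,1) = \sqrt k\,\psi_\Ga\!\left(\tfrac1{\sqrt k}(1,\dots,1)\right) = \sqrt k\,\psi_\Ga(u_\Ga) = \sqrt k\,\delta$, where the last equality is the cited result $\delta = \psi_\Ga(u_\Ga)$. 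This yields \eqref{explicit}.

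There is essentially no obstacle here beyond bookkeeping: the corollary is a formal consequence of Theorem \ref{main2} and standard properties of $\psi_\Ga$. The one point worth stating carefully is the uniqueness-of-maximizer argument, which uses strict convexity of $\{\|v\|\le 1\}$ to rule out a flat piece of $\psi_\Ga$ on the unit sphere producing a non-symmetric maximizer — but this is exactly the reasoning already invoked in the paragraph defining $u_\Ga$, so it can simply be cited rather than reproven.
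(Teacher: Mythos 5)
Your argument is correct and is essentially the paper's own: the paper likewise deduces $u_\Ga = \tfrac{1}{\sqrt k}(1,\dots,1)$ from symmetry together with the concavity/strict-convexity uniqueness of the maximizer, and then applies Theorem \ref{main2} with $u=(1,\dots,1)$ plus homogeneity of $\psi_\Ga$ to get $\sqrt k\,\delta$. The only point worth making explicit is that $(1,\dots,1)\in\inte\L$ is not purely formal from the identification of $u_\Ga$ --- it uses the fact that $u_\Ga\in\inte\L$ (Theorem \ref{p1}), which you should cite at that step.
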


If the $\rho_i$ are all conjugate, the growth indicator function $\psi_\Ga$ is symmetric for an obvious reason.  In section \ref{sec.sym}, we construct many geometric examples where
no two of the $\rho_i$ are conjugate to each other and $\psi_\Ga$ is symmetric.

\subsection*{On the proofs}
The class of groups $\Ga$ we consider are precisely those of
 Anosov subgroups of $G$ with respect to a minimal parabolic subgroup in the sense of Guichard and Wienhard \cite{GW}, who generalized Labourie's notion of Anosov representations on Hitchin representations \cite{La}.

One important feature of these Anosov subgroups is that their limit sets consist entirely of conical limit points \eqref{con}. This feature allows us to cover the limit set by shadows using which we can compare the Hausdorff dimension of the limit set and $\delta_{\min}$ and hence prove Theorem \ref{main}. This argument is an easy adaptation of Sullivan's proof on the rank one case.

The upper bound in  Theorem \ref{main2} is obtained by a similar idea and holds for all $k\ge 1$.
The proof of the lower bound in Theorem \ref{main2} is 
based on the reparametrization theorem  for Anosov subgroups (\cite{Samb1}, \cite{BCLS}, \cite{CS}), which provides us with a trivial vector bundle with fiber $\br^{k-1}$ over a compact space $Z$ associated to the dynamics of one-dimensional diagonal flow in the direction of $u$ (see section \ref{fibdyn}). If we denote by $\nu_u$ the Patterson-Sullivan measure for the direction $u$,
then the measure of maximal entropy $\mathsf m_u$ on $Z$ is locally equivalent to
$\nu_u\otimes \nu_u \otimes d \op{Leb}$  and 
 $\mathsf m_u$-almost all points have their $\br^{k-1}$-coordinate map
decaying sublinearly along the flow (Theorem \ref{zo} and Corollary \ref{ae}). Using this,
we get estimates on the local size of $\nu_u$ at almost all points (Theorem \ref{p4}).
For $k\le 3$, we have $\nu_u(\La_u)=1$ by \cite{BLLO}, which enables us
to use the mass distribution principle to prove Theorem \ref{main2}. We remark that this is the exact reason for the hypothesis $k\le 3$ in Theorem \ref{main2}.

Our approach works for any Anosov subgroup of a semisimple real algebraic group of rank at most $3$, provided the Hausdorff dimension of the limit set is computed with respect to a well-chosen metric on the Furstenberg boundary.
The reason we have chosen to write this paper mainly for the product of $\SO^{\circ}(n_i,1)$'s is because $\F$ in this case is simply $\prod_{i=1}^k \mathbb S^{n_i-1}$ and hence is equipped with a {\it natural} metric, that is, the Riemannian product of spherical metrics on $\mathbb S^{n_i-1}$'s. Our theorems are all valid when $\SO^{\circ}(n_i,1)$  is replaced by a rank one simple Lie group
 $G_i=\op{Isom}^\circ X_i$ (here $X_i$ is a rank one Riemannian symmetric space), provided we use a certain sub-Riemannian metric on the Furstenberg boundary $\cal F$ invariant under a maximal compact subgroup of $G$ as described in \cite{Co} (see Remark \ref{rone}).

\begin{Rmk} We mention that a certain upper bound
on the dimension of the limit set for projective Anosov representations 
was obtained in \cite{GMT} 
 and  an equality between the Hausdorff dimension of the limit set and the first simple root critical exponent for certain hyperconvex representations in
$\op{SL}_n(\br)$ was obtained in \cite{PSW2}. 
Both papers neither address the cases of
 products of rank one groups nor yield the identity as in \eqref{in2}, not to mention that the directional limit sets were not considered at all.
 \end{Rmk}

\subsection*{Organization} In section \ref{prelim}, we review basic notions and state known results about Anosov subgroups of $\prod_{i=1}^k \SO^\circ(n_i,1)$. 
In section \ref{LaHdim}, we prove Theorem \ref{main}.  
In section \ref{fibdyn}, we discuss the trivial vector bundle mentioned above, and prove a result that the vector-valued coordinate map associated to $u$ decays with speed $o(t)$ under the time $t$-flow $\exp tu$ (Theorem \ref{zo}).
In section \ref{loc}, we prove Theorem \ref{main2} by studying the local behavior of the measures $\nu_u$ in Theorem \ref{p4}. In the last section \ref{sec.sym}, we discuss some geometric examples with symmetric growth indicator functions.

\subsection*{Question}
As mentioned, our proof for the lower bound
in Theorem \ref{main2} requires the restriction $1\le k\le 3$, whereas the upper bound holds for any $k\ge 1$.
It would be interesting to understand whether lower bound is still valid for a general $k\ge 1$ or not.

\subsection*{Acknowledgements}
Our work has been largely inspired by a pioneering paper of Marc Burger \cite{Bu} on a higher rank Patterson-Sullivan theory. In particular, for $k=2$, the upper bound of Theorem \ref{main2} was already noted in \cite[Thm. 2]{Bu}. We would like to dedicate this paper to him on the occasion of his sixtieth birthday with affection and admiration. We are grateful to Dick Canary for helpful remarks on an earlier version of this paper, and in particular for pointing out how to strengthen our original version of Theorem \ref{main}. We would like to thank Minju Lee for helpful discussions. We are also grateful to anonymous referees for 
many useful comments.

\section{Preliminaries} \label{prelim}
We briefly recall the setup from the introduction.  For each $1 \le i \le k$, $G_i=\op{SO}^\circ(n_i,1)$ and $X_i=(\bH^{n_i},d_i)$ for $n_i \ge 2$.
 Let $G = \prod_{i = 1}^k G_i$, $X=\prod_{i=1}^k X_i$ with $d=\sqrt{\sum d_i^2}$ and
$\cal F=\prod_{i=1}^k \mathbb S^{n_i-1}$.

 Let $\Delta$ be a finitely generated group and $\rho_i : \Delta \to G_i$ a 
 convex cocompact faithful representation with Zariski dense image for each $1\le i\le k$. 
 In the whole paper, let $\G$ be the subgroup of $G$ defined as
$$\G=\{ (\rho_1(\s), \cdots, \rho_k(\s))\in G: \s\in \Delta\}.$$
We will assume that $\Ga$ is Zariski dense, or equivalently, no two $\rho_i$'s are conjugate to each other.

We remark that the class of these groups is precisely
the class of Anosov subgroups of $G$ with respect to a minimal parabolic subgroup  in the sense of Guichard and Wienhard \cite{GW}. This follows from combining \cite[Lem. 3.18, Coro. 4.16 and Thm. 5.15]{GW}: $\Ga$ is Anosov with respect to a minimal parabolic subgroup of $G$ if and only if for each $1\le i \le k$, $\rho_i(\Delta)$ is Anosov with respect to a minimal parabolic subgroup of $G_i$, and Anosov subgroups of $G_i$ are precisely convex cocompact subgroups.

This enables us to use the general theory developed for Zariski dense Anosov subgroups.
Fix a basepoint $o_i\in X_i$ for each $i$, and we write $o=(o_1, \cdots, o_k)\in X$.
Let $$\fa=\br^k\quad\text{ and }\quad \fa^+=\{(u_1, \cdots, u_k)\in \br^k: u_i\ge 0\text{ for all $i$}\}.$$
We denote by $\|\cdot\|$ the standard Euclidean norm on $\fa$.

The {limit set} of $\Gamma$, which we denote by $\Lambda=\Lambda_\Ga$,
is defined as the set of all accumulation points of $\Ga o$ in the Furstenberg boundary $\F $, as in \eqref{dll}. It is the unique $\Ga$-minimal subset of $\F$ (\cite{Ben}, \cite[Lem. 2.13]{LO}).
 
  For each $\xi=(\xi_1, \cdots, \xi_k)\in \F$ and $(t_1, \cdots, t_k)\in \fa$, we write
\be \label{xixi}  \xi(t_1, \cdots, t_k)=(\xi_1(t_1), \cdots, \xi_k(t_k))\ee
where  $\{\xi_i (t):t\ge 0\}$ denotes the unit speed geodesic from $o_i$ to $\xi_i$ in $X_i$. Set
 \be\label{xia} \xi(\fa^+):=\{ \xi(t_1, \cdots, t_k)\in X: t_i\ge 0\text{ for all $i$}\}. \ee   
Recall that $\xi\in \F$ is called a conical limit point if there exists  a sequence $\ga_j\in \G$  such that
$$ \sup_j d(\xi(\fa^+), \ga_j o )<\infty  .$$
If $\La_{\mathsf c}$ denotes the set of all conical limit points, then it is a well-known property of
an Anosov subgroup (cf. \cite[Prop. 7.4]{LO}) that
\be\label{con} \La=\La_{\mathsf c}.\ee
The Cartan projection of $g=(g_i)_{i=1}^k\in G$ is given by
$$\mu(g)=(d_{1}(g_1 o, o), \cdots, d_{k}(g_k o, o) )\in \fa^+.$$
In particular, $d(go, o)=\|\mu(g)\|$.
We denote by $\cal L \subset \fa^+$ the limit cone of $\Ga$, which is the asymptotic cone of $\mu(\Gamma)$. It is a convex cone with non-empty interior \cite{Ben}.
 Let $\delta=\delta_\Ga$ denote the {critical exponent} of $\Ga$,
which is  the abscissa of convergence of the Poincar\'e series $\cal P_\Gamma (s) = \sum_{\ga \in \Ga} e^{-s \|\mu(\ga)\|}$.
It follows from the non-elementary assumption on $\rho_i(\Delta)$ that $\delta>0$.

Let $\psi_\Ga : \fa \to\bb R\cup\{-\infty\}$ denote the growth indicator function of $\Gamma$
defined as in the introduction (see \eqref{grow3}). By the concavity of
$\psi_\Ga$ and the strict convexity of the unit norm ball $\{\|u\|\le 1\}$,  there exists a unique unit vector $ u_\Gamma\in \L $ such that
\be\label{tg} \delta=\sup_{\|u\|=1}\psi_\Ga(u)=\psi_\Ga(u_\Ga) .\ee

As all $\rho_i:\Delta \to G_i$ are faithful convex cocompact, it follows that
 there exist constants $C,C'>0$ such that for all $\sigma\in\Delta$ and $
 1\le i, j\le k$, we have
\begin{equation*}
     \label{eqn.Anosovdef}
d_i (\rho_i(\sigma)o_i, o_i)\ge C d_j (\rho_j(\sigma)o_j, o_j )-C'
\end{equation*}
(cf. \cite[Thm. 5.15]{GW}).
Therefore:
\begin{Thm}\label{p0} We have
 $\L\subset \inte \fa^+ \cup\{0\}$.
\end{Thm}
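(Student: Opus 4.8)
The statement follows almost immediately from the displayed inequality preceding it (which encodes convex cocompactness / the Anosov property via \cite[Thm. 5.15]{GW}): there are constants $C,C'>0$ so that for all $\s\in\Delta$ and all $1\le i,j\le k$,
\be\label{planineq}
d_i(\rho_i(\s)o_i,o_i)\ \ge\ C\,d_j(\rho_j(\s)o_j,o_j)-C'.
\ee
So the plan is just to feed \eqref{planineq} into the definition of the limit cone and pass to the limit.

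First I would take an arbitrary $v=(v_1,\dots,v_k)\in\L$ with $v\neq 0$, and by definition of $\L$ choose $\ga_\ell=(\rho_1(\s_\ell),\dots,\rho_k(\s_\ell))\in\Ga$ and $t_\ell\to 0^+$ with $t_\ell\,\mu(\ga_\ell)\to v$. Writing $\mu(\ga_\ell)=(\mu_1(\ga_\ell),\dots,\mu_k(\ga_\ell))$ with $\mu_j(\ga_\ell)=d_j(\rho_j(\s_\ell)o_j,o_j)$, pick an index $j$ with $v_j>0$; this exists since $v\neq 0$. From $t_\ell\mu_j(\ga_\ell)\to v_j>0$ and $t_\ell\to 0$ we get $\mu_j(\ga_\ell)=\big(t_\ell\mu_j(\ga_\ell)\big)/t_\ell\to\infty$.

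Next, for an arbitrary index $i$, multiply \eqref{planineq} (applied to $\s=\s_\ell$ and this pair $i,j$) by $t_\ell>0$:
\be
t_\ell\,\mu_i(\ga_\ell)\ \ge\ C\,t_\ell\,\mu_j(\ga_\ell)-C'\,t_\ell .
\ee
Letting $\ell\to\infty$, the left side tends to $v_i$, the first term on the right to $Cv_j$, and the last term to $0$, so $v_i\ge Cv_j>0$. Since $i$ was arbitrary, $v\in\inte\fa^+$. As $0\in\L$ trivially, this gives $\L\subset\inte\fa^+\cup\{0\}$.

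There is no real obstacle here; the entire content sits in the inequality \eqref{planineq}, which is quoted from Guichard--Wienhard and holds precisely because each $\rho_i$ is convex cocompact with Zariski dense image (in fact only faithfulness and convex cocompactness of the $\rho_i$ are used). The only point worth a sentence is that \eqref{planineq} is assumed for \emph{all} ordered pairs $(i,j)$, so one may always bound $\mu_i$ from below by a positive multiple of $\mu_j$ up to an additive constant — which is exactly what makes every coordinate of a nonzero element of $\L$ strictly positive.
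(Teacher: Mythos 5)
Your argument is correct and matches the paper's intended proof: the paper deduces Theorem \ref{p0} directly from the displayed inequality $d_i(\rho_i(\sigma)o_i,o_i)\ge C\,d_j(\rho_j(\sigma)o_j,o_j)-C'$ (quoted from \cite[Thm. 5.15]{GW}), and you simply fill in the routine limiting argument with $t_\ell\mu(\ga_\ell)\to v$. No gaps; the observation that $\mu_j(\ga_\ell)\to\infty$ is not even needed, since multiplying the inequality by $t_\ell$ and letting $\ell\to\infty$ already gives $v_i\ge Cv_j>0$ for every $i$.
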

The following theorem follows from the fact that $\Ga$ is a Zariski dense Anosov subgroup of 
$G = \prod_{i = 1}^k \SO^{\circ}(n_i, 1)$ with respect to a minimal parabolic subgroup  \cite[Lem. 4.8]{Samb3} and \cite[Prop. 4.6 and 4.11]{SP}.  
\begin{Thm} \label{p1} We have  $u_\Ga\in \inte \L$.
\end{Thm}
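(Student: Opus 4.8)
The plan is to rule out the only alternative, namely that $u_\Ga$ lies on the boundary $\partial\L$ of the limit cone. First, the unit vector $u_\Ga$ satisfies $\psi_\Ga(u_\Ga)=\delta$ by \eqref{tg}, and $\delta>0$, so $u_\Ga\in\{\psi_\Ga\ge 0\}=\L$; moreover, by Theorem \ref{p0} together with Benoist's theorem \cite{Ben}, $\L$ is a convex cone with non-empty interior contained in $\inte\fa^+\cup\{0\}$, so that $\inte\L$ is the ordinary topological interior and it remains only to show $u_\Ga\notin\partial\L$. Since $\psi_\Ga$ is homogeneous of degree one (cf. \eqref{grow3}), the defining maximality \eqref{tg} of $u_\Ga$ on the unit sphere says precisely that $\psi_\Ga(v)\le\delta\|v\|$ for every $v\in\fa$, with equality at $v=u_\Ga$. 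Hence $f:=\delta\|\cdot\|-\psi_\Ga$ is a proper convex function on $\fa$ (finite exactly on $\L$) that is everywhere nonnegative and vanishes at $u_\Ga$; that is, $u_\Ga$ is a global minimum of $f$.

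From this minimality I would extract, by soft convex analysis, that $\psi_\Ga$ is superdifferentiable at $u_\Ga$. Since $\delta\|\cdot\|$ is finite and continuous, the Moreau--Rockafellar sum rule applies to the splitting $f=\delta\|\cdot\|+(-\psi_\Ga)$ and shows that $\partial f(u_\Ga)$ equals $\delta\,\partial\|\cdot\|(u_\Ga)$ plus the subdifferential of $-\psi_\Ga$ at $u_\Ga$. As $u_\Ga$ is a unit vector, the Euclidean norm is differentiable at $u_\Ga$ with gradient $u_\Ga$; hence the inclusion $0\in\partial f(u_\Ga)$, which holds because $u_\Ga$ minimizes $f$, forces $\delta\,u_\Ga$ to be a supergradient of the concave function $\psi_\Ga$ at $u_\Ga$. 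In particular the graph of $\psi_\Ga$ admits a supporting affine hyperplane over $u_\Ga$ (necessarily linear, by $1$-homogeneity). The theorem is thereby reduced to the following assertion about $\psi_\Ga$ alone: it is not superdifferentiable at any point of $\partial\L\setminus\{0\}$, i.e. its graph admits no supporting affine hyperplane over a boundary ray of the limit cone.

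This last reduction is the heart of the matter, and it is where Zariski density and the Anosov property enter in an essential way; it is the content of \cite{Samb3} and \cite{SP}. There one shows that the growth indicator of a Zariski dense Anosov subgroup with respect to a minimal parabolic subgroup is real-analytic and strictly concave on the open cone $\inte\L$, and that its supergradients escape to infinity as one approaches $\partial\L$, so that the graph of $\psi_\Ga$ becomes ``vertical'' over every boundary ray of $\L$; granting this, the previous paragraph gives $u_\Ga\notin\partial\L$, hence $u_\Ga\in\inte\L$. Heuristically this vertical tangency reflects the Zariski density of $\Ga$ --- the Patterson--Sullivan measure is fully supported on $\La$, which spreads the exponential growth of $\Ga$ throughout $\inte\L$ instead of letting it concentrate on a proper face --- but I expect the precise statement on the boundary behavior of $\psi_\Ga$ to be the only genuinely non-formal input, everything else being routine convex analysis.
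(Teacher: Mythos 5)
Your proof is essentially the paper's: the paper establishes Theorem \ref{p1} purely by invoking \cite[Lem. 4.8]{Samb3} and \cite[Prop. 4.6 and 4.11]{SP} — i.e. the analyticity/strict concavity/vertical tangency properties of $\psi_\Ga$ for Zariski dense Anosov subgroups — and your argument rests on exactly the same input from the same references. The convex-analytic step you add (that $u_\Ga$ being the maximal growth direction forces $\delta\langle u_\Ga,\cdot\rangle$ to be tangent to $\psi_\Ga$ at $u_\Ga$, which is impossible at a point of $\partial\L\setminus\{0\}$ where no finite tangent form exists) is correct and standard (cf. Quint \cite{Quint1}), so there is no gap.
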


For $x=(x_1, \cdots, x_k), y=(y_1, \cdots, y_k)\in X$, and $\xi=(\xi_1, \cdots, \xi_k)\in \F$, the $\fa$-valued Busemann function is given as
$$\beta_\xi(x, y)=(\beta_{\xi_1}(x_1, y_1),\cdots, \beta_{\xi_k}(x_k, y_k)) \in \fa$$
where
$\beta_{\xi_i}(x_i, y_i)=\lim_{t\to +\infty}d_{i}(\xi_i(t),
x_i) - d_{i}(\xi_i(t), y_i)$ is the Busemann function on $\mathbb S^{n_i-1}\times X_i \times X_i$.

\begin{Def} For  a linear form $\psi\in \fa^*$, a Borel probability measure $\nu$ on $\La$ is called a $(\Gamma, \psi)$-Patterson-Sullivan measure if the following holds:
for any $\xi\in \La$ and $\ga\in \Ga$,
$$\frac{d\ga_*\nu}{d\nu}(\xi)=e^{-\psi (\beta_\xi(\ga o, o))}$$
where $\ga_*\nu(W)=\nu(\gamma^{-1}W)$ for any Borel subset $W\subset \La$.
\end{Def}

A linear form $\psi\in \fa^*$ is called {\it tangent to $\psi_\Ga$ at $u\in \fa$} if $\psi\ge \psi_\Ga$ and $\psi(u)=\psi_\Ga(u)$.

\begin{Thm}[{\cite[Thm. 7.7 and
Cor. 7.8]{ELO}}, {\cite[Cor. 7.12]{LO}}] \label{p2} Let $u\in \inte \L$.
\begin{enumerate}
    \item  There exists a unique $\psi_u\in \fa^*$ which is tangent to $\psi_\Ga$ at $u$. 
   
    \item There exists a unique $(\Ga,\psi_u)$-Patterson-Sullivan measure, say, $\nu_u$.
\item  The abscissa of convergence of the series $$\cal P_u(s):=\sum_{\ga\in \Ga} e^{-s\psi_u (\mu(\ga))}$$ is equal to $1$ and $\cal P_u(1)=\infty$. 

\end{enumerate}
\end{Thm}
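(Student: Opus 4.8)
The plan is to establish the three assertions separately, drawing in each case on facts about Zariski dense Anosov subgroups (for (1) and (3)) and on the conformal-density / Bowen--Margulis theory (for (2)); since this is precisely \cite[Thm. 7.7, Cor. 7.8]{ELO} together with \cite[Cor. 7.12]{LO}, I would simply cite those, but here is how the argument goes. For (1), the existence of a tangent form is soft: the hypograph $\{(v,t)\in\fa\times\br: t\le\psi_\Ga(v)\}$ is convex by concavity of $\psi_\Ga$, and since $\psi_\Ga(u)>0$ for $u\in\inte\L$ the point $(u,\psi_\Ga(u))$ lies on its boundary, so it admits a supporting hyperplane; positive homogeneity of $\psi_\Ga$ forces this hyperplane through the origin, i.e. to be the graph of a linear form $\psi_u\ge\psi_\Ga$ with $\psi_u(u)=\psi_\Ga(u)$. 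Uniqueness is the substantive point, and this is where the Anosov hypothesis is used: by the reparametrization theorem (\cite{Samb1}, \cite{BCLS}, \cite{CS}) one identifies $\psi_\Ga$ on $\inte\L$ with a Legendre-type transform of the real-analytic, strictly convex pressure function of the associated Anosov flow, so $\psi_\Ga$ is analytic and strictly concave on $\inte\L$; its differentiability at $u$ then makes the supporting hyperplane, hence $\psi_u$, unique.

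For (2), I would get existence from Patterson's construction applied to $\cal P_u$: set $\nu_s=\cal P_u(s)^{-1}\sum_{\ga\in\Ga}e^{-s\psi_u(\mu(\ga))}\,\delta_{\ga o}$ for $s>1$ and extract a weak-$*$ limit $\nu_u$ as $s\downarrow 1$; because $\cal P_u(1)=\infty$ by part (3) the mass escapes to $\F$, indeed to $\La$, and the $\fa$-valued Busemann cocycle identity together with the standard estimates relating $\beta_\xi(\ga o,o)$ to the Cartan projection along conically converging orbits upgrade the approximate conformality of $\nu_s$ to exact $(\Ga,\psi_u)$-conformality of $\nu_u$. For uniqueness I would use the shadow lemma for $\psi_u$-conformal densities, $\nu_u(\text{shadow at }\ga o)\asymp e^{-\psi_u(\mu(\ga))}$, to construct the Bowen--Margulis--Sullivan measure for the one-parameter flow in direction $u$ on $\Ga\backslash G$; its finiteness and ergodicity --- the heart of \cite{ELO} and \cite{LO} --- then force any two $(\Ga,\psi_u)$-conformal probability measures on $\La$ to coincide, by a Hopf-type argument.

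For (3), to bound the abscissa above by $1$ I would split $\fa^+$ into finitely many narrow open cones: on a cone around the direction of $u$, tangency of $\psi_u$ to the (now $C^1$) function $\psi_\Ga$ makes the two comparable up to a factor $1+\e$ there, so that piece of $\cal P_u(s)$ converges for $s>1$ by the very definition of $\psi_\Ga$; on the cones bounded away from $\br_{>0}u$, strict concavity of $\psi_\Ga$ makes $\psi_u$ dominate $\psi_\Ga$ by a definite multiplicative margin, so those pieces converge even for $s$ slightly below $1$. For the reverse inequality and divergence at $1$, I would count the $\ga$ with $\mu(\ga)$ within bounded distance of $\br_{>0}u$ and $\|\mu(\ga)\|\in[m,m+1]$: by Quint's growth-indicator lower bound (sharpened to a genuine asymptotic in the Anosov case by orbit counting) this shell contains $\asymp e^{m\psi_\Ga(u)/\|u\|}$ elements, each with $\psi_u(\mu(\ga))=m\psi_\Ga(u)/\|u\|+O(1)$ since $\psi_u$ is linear with $\psi_u(u)=\psi_\Ga(u)$; the shell thus contributes $\asymp e^{(1-s)m\psi_\Ga(u)/\|u\|}$ to $\cal P_u(s)$, and the sum over $m$ diverges precisely at $s=1$. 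The hardest inputs are the $C^1$-regularity and strict concavity of $\psi_\Ga$ on $\inte\L$ needed in (1) and (3) and the uniqueness in (2); I expect these to be the main obstacles, and indeed both are imported wholesale from the Anosov thermodynamic-formalism and conformal-density literature cited above rather than reproved here.
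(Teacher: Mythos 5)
You are right that the paper itself offers no proof of Theorem \ref{p2}: it is imported from \cite{ELO} and \cite{LO} (with the existence part of (2) attributed to Quint \cite{Quint2}), and the only argument appearing in the text is the Patterson-type construction of $\nu_u$ after the statement, which your sketch of (1) and of the existence half of (2) essentially reproduces. The genuine gap is in your divergence argument for (3). You count the $\ga$ with $\mu(\ga)$ within \emph{bounded} distance of $\br_{>0}u$ and $\|\mu(\ga)\|\in[m,m+1]$ and claim there are $\asymp e^{m\psi_\Ga(u)/\|u\|}$ of them. Quint's growth indicator gives no such lower bound for a bounded tube (only for open cones), and the sharp local counting for Anosov subgroups in higher rank carries a polynomial correction: the number of such $\ga$ is $\asymp e^{m\psi_\Ga(u)/\|u\|}\,m^{-(k-1)/2}$. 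With the correct count, your restricted sum at $s=1$ is $\asymp \sum_m m^{-(k-1)/2}$, which diverges only when $k\le 3$ --- that is exactly the directional dichotomy of \cite{BLLO} quoted later as Theorem \ref{dic}, and emphatically \emph{not} the statement $\cal P_u(1)=\infty$, which holds for every $k$ and is needed for every $k$ in the construction of $\nu_u$. To get divergence of the full series one must not restrict to a bounded tube: the $\ga$ with $\mu(\ga)$ within distance $O(\sqrt m)$ of the ray (where $\psi_u-\psi_\Ga$ is only $O(1)$, by the quadratic behaviour of $\psi_\Ga$ at the tangency direction) contribute a transverse Gaussian factor $\asymp m^{(k-1)/2}$ that exactly cancels the polynomial correction; equivalently, one invokes the one-functional counting asymptotic $\#\{\ga:\psi_u(\mu(\ga))\le t\}\sim c\,e^{t}$ established in the Anosov setting (\cite{Samb}, \cite{ELO}). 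Conflating the tube series with $\cal P_u$ blurs precisely the distinction on which the paper's later restriction to $k\le 3$ rests.

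A smaller but real issue is your uniqueness argument in (2). You invoke ``finiteness and ergodicity'' of the Bowen--Margulis--Sullivan measure for the one-parameter flow in direction $u$. In this higher-rank setting $\m^{\BMS}_u$ is an \emph{infinite} Radon measure on $\Ga\ba G$, and its ergodicity under $\{a_{tu}\}$ holds if and only if $k\le 3$ (again \cite{BLLO}, as remarked in section \ref{fibdyn}), whereas uniqueness of the $(\Ga,\psi_u)$-Patterson--Sullivan measure is asserted, and used, for all $k$. The correct substitutes are ergodicity under the full $A$-action (\cite{LO2}) or the route of \cite[Cor.~7.12]{LO}, which combines the fact that the limit set of an Anosov group consists entirely of conical points with the shadow lemma, and makes no appeal to directional ergodicity. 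Your part (1) (supporting hyperplane plus strict concavity and differentiability of $\psi_\Ga$ from the thermodynamic formalism) and your cone-splitting upper bound for the abscissa in (3) are fine and consistent with the cited sources.
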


We remark that the existence of $(\Ga, \psi_u)$-Patterson-Sullivan measure was proved by Quint \cite{Quint2}.

\subsection*{Construction of $\nu_u$} Fix $u\in \inte\L$.
By Theorem \ref{p0}, $\L\subset \inte \fa^+ \cup \{0 \}$; this implies that
 all the accumulation points of $\Ga o$ lie in $\F$ and hence in $\La$. Therefore
$\G o\cup \Lambda$ is a compact space.
For $s>1$,  by Theorem \ref{p2}(3), $\cal P_u(s)$ is well-defined
and hence we may consider the probability measure on $\G o \cup  \Lambda$ given by 
\be\label{exp} \nu_{u, s}:= \frac{1}{\cal P_u(s)} \sum_{\ga\in \Ga} e^{-s\psi_u (\mu(\ga))} D_{\ga o} \ee 
where $D_{\ga o }$ denotes the Dirac measure on $\ga o$.
Note that the space of probability measures on $\G o \cup \Lambda$ is
a weak$^*$ compact space.
Therefore, by passing to a subsequence, it weakly converges to a probability measure, say $\tilde \nu_u$, on $\G o\cup \La$. Since $\cal P_u(1)=\infty$ by Theorem \ref{p2}(3), $\tilde \nu_u$ is
supported on $\La$. It is standard to check that $\tilde \nu_u$ is a $(\Ga, \psi_u)$-Patterson-Sullivan measure. Now the uniqueness of $(\Ga, \psi_u)$-Patterson-Sullivan measure (Theorem \ref{p2}(2)) implies that
$\tilde \nu_u=\nu_u$, as given in Theorem \ref{p2}(2). 
Therefore, as $s\to^{+}1$, $\nu_{u, s}$ weakly converges to $\nu_u$.

\subsection*{Hausdorff dimension} 
For $S\subset \F$ and $s>0$, 
the $s$-dimensional Hausdorff measure of $S$ is defined by
$H^s(S) =\lim_{\e\to 0}\inf\{\sum_j r_j^s: S\subset \bigcup_{j\in J} B(x_j, r_j):0<r_j\le \e\}$
where the infimum is taken over all countable covers of $S$ by balls of radius at most $\e$.
The Hausdorff dimension of $S$ is defined as $$\dim S := \inf \{ s \ge 0 : H^s(S) = 0 \},$$
or equivalently the supremum $s$ such that
$H^s(S)=+\infty$.
We refer to \cite{BP} for general facts on Hausdorff dimension.

\section{Hausdorff dimension of $\La$} \label{LaHdim}
In this section, we prove Theorem \ref{m0}, which implies Theorem \ref{main}.

Let  $\delta_{\min}$ denote the abscissa of convergence of the series 
$$ \sum_{\ga
=(\ga_1, \cdots, \ga_k)\in \Ga} e^{-s \min_i d_i (o_i, \ga_i  o_i)}.$$
The notation $\delta_{\rho_i}$ means the critical exponent of $\rho_i(\Delta)$. 

\begin{Thm} \label{m0} For any $k\ge 1$, we have
\be\label{up0} \dim{} \La = \delta_{\min} =\max_{1 \le i \le k} \dim \La_{\rho_i}
=\max_{1 \le i \le k} \delta_{\rho_i}
 .\ee \end{Thm}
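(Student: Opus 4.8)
The plan is to prove the chain of equalities in \eqref{up0} by establishing the outer equality $\dim \La = \delta_{\min}$ directly, and then recognizing the remaining identities as bookkeeping. First I would observe that $\delta_{\min}$, $\max_i \delta_{\rho_i}$, and $\max_i \dim \La_{\rho_i}$ all coincide by the classical Patterson--Sullivan theorem (which gives $\dim \La_{\rho_i} = \delta_{\rho_i}$ for each $i$) together with the elementary fact that the Poincaré series $\sum_{\ga \in \Ga} e^{-s \min_i d_i(o_i,\ga_i o_i)}$ has the same abscissa of convergence as $\max_i \sum_{\sigma \in \Delta} e^{-s d_i(\rho_i(\sigma)o_i, o_i)}$: indeed $e^{-s\min_i d_i} = \max_i e^{-sd_i} \le \sum_i e^{-sd_i} \le k\max_i e^{-sd_i}$, so the series converges iff each of the $k$ individual Poincaré series converges, i.e.\ iff $s > \max_i \delta_{\rho_i}$. (Here one uses $s>0$, which holds near the relevant abscissae since $\delta_{\rho_i}>0$ by non-elementarity.) So the whole theorem reduces to the single statement $\dim \La = \delta_{\min}$.

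For the upper bound $\dim \La \le \delta_{\min}$, I would use the covering-by-shadows argument that the paper itself flags as an adaptation of Sullivan's rank-one proof. The key input is \eqref{con}: every point of $\La$ is conical, so for $\xi \in \La$ there is a sequence $\ga_j \in \Ga$ with $\xi(\fa^+)$ staying within bounded distance of the orbit points $\ga_j o$. One defines, for $g \in \Ga$ and $R>0$, a shadow $O_R(g) \subset \F$ consisting of limit points $\xi$ whose product geodesic $\xi(\fa^+)$ passes within $R$ of $go$; conicality means the $O_R(g)$, over a suitable net of group elements, cover $\La$. The diameter of such a shadow in the Riemannian product metric on $\F = \prod \S^{n_i-1}$ is comparable to $e^{-\min_i d_i(o_i, g_i o_i)}$ — this is the crucial geometric estimate, and it is exactly why $\min_i$ appears: the spherical distance contracted in the $i$-th factor along a ray of length $t_i$ is $\asymp e^{-t_i}$, and the Euclidean-product combination of these is dominated by the largest, i.e.\ by the factor with the \emph{smallest} translation length. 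Then for $s > \delta_{\min}$ one organizes the cover by annular shells $\{\ga : n \le \min_i d_i(o_i,\ga_i o_i) < n+1\}$, sums $\sum (\op{diam} O_R(\ga))^s \lesssim \sum_{\ga} e^{-s\min_i d_i(o_i,\ga_i o_i)} < \infty$, and concludes $H^s(\La) < \infty$, hence $\dim \La \le s$; letting $s \to \delta_{\min}$ gives the bound. A shadow lemma controlling overlaps (bounded multiplicity of the cover) is needed to make the sum estimate honest.

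For the lower bound $\dim \La \ge \delta_{\min} = \max_i \delta_{\rho_i}$, I would argue that $\La$ surjects onto $\La_{\rho_i}$ under the $i$-th coordinate projection $p_i : \F \to \S^{n_i-1}$, because the image of $\Ga o$ under $p_i$ is exactly the $\rho_i(\Delta)$-orbit $\rho_i(\Delta)o_i$ and taking closures gives $p_i(\La) = \La_{\rho_i}$ (using $\L \subset \inte \fa^+ \cup \{0\}$, Theorem \ref{p0}, so all accumulation happens in the Furstenberg boundary as the construction section notes). Since $p_i$ is $1$-Lipschitz for the product metric, Hausdorff dimension cannot increase: $\dim \La \ge \dim p_i(\La) = \dim \La_{\rho_i} = \delta_{\rho_i}$. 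Taking the maximum over $i$ yields $\dim \La \ge \max_i \delta_{\rho_i} = \delta_{\min}$, completing the proof.

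The main obstacle is the upper bound, specifically making the shadow-covering argument rigorous: one must (i) pin down the diameter estimate $\op{diam} O_R(\ga) \asymp e^{-\min_i d_i(o_i, \ga_i o_i)}$ with explicit control of the implied constants in terms of $R$ alone (this is where the product structure of the metric and the rank-one shadow lemma in each factor get combined), and (ii) verify that the cover of $\La$ by shadows over a fixed-length net of $\Ga$ has bounded multiplicity, so that the Hausdorff sum is genuinely comparable to the Poincaré-type series. Both are standard in spirit but require care because $\Ga$ is an infinite-covolume subgroup of a higher-rank group; the saving grace is that $\L$ is strictly inside the positive Weyl chamber (Theorem \ref{p0}), which keeps all coordinate translation lengths comparable up to multiplicative constants and prevents the shadows from degenerating. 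The lower bound, by contrast, is essentially immediate once one has the coordinate-projection picture.
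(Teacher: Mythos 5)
Your proposal is correct and follows essentially the same route as the paper: a Sullivan-style covering of the limit set by shadows of diameter $\lesssim e^{-\min_i d_i(o_i,\ga_i o_i)}$ using conicality \eqref{con} for the upper bound, the Lipschitz coordinate projections together with $\dim\La_{\rho_i}=\delta_{\rho_i}$ for the lower bound, and the elementary series comparison identifying $\delta_{\min}$ with $\max_i\delta_{\rho_i}$. The only differences are cosmetic: the paper deals with the point-dependent conicality constant by writing $\La=\bigcup_{N}\La_N$ and using countable stability of Hausdorff dimension (rather than a single ``suitable net''), and your worry about bounded multiplicity of the cover is unnecessary, since an upper bound on Hausdorff measure needs only a cover, not any overlap control.
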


We need to introduce some notations for the proof of this theorem. These notations will also be used in the next sections as well.
Let $K_i\simeq \SO(n_i) $ be the maximal compact subgroup of $G_i$ given as the stabilizer of $ o_i \in X_i$.  Fixing a unit tangent vector $\mathsf{v}_i$ at $o_i$, let $ M_i:=\op{Stab}(\mathsf{v}_i) $. We then have the following identification:
 $G_i/K_i=X_i$ and $G_i/M_i=\T^1 X_i$.
 Let $A_i=\{a^{(i)}_{t}:t\in \br\} <G_i$ denote the one-parameter subgroup of semisimple elements whose right translation action on $G_i/M_i$ corresponds to
the geodesic flow on $\T^1 X_i$. 
Set 
$K=\prod_{i = 1}^k  K_i<G$,
$M=\prod_{i = 1}^k  M_i<G$, and $A=\prod_{i = 1}^k A_i$.
We also set $A^+=\prod_{i = 1}^k A_i^+$ 
where $A_i^+=\{a^{(i)}_{t}:t\ge 0\}$.
Then $X=G/K$.

 For each $i$ and $g_i\in G_i$, we denote by
 $g_i^+\in \mathbb S^{n_i-1}$ and $g_i^-\in \mathbb S^{n_i-1}$ respectively 
 the forward and backward endpoints of the geodesic determined by the tangent vector $[g_i]\in \T^1 \bH^{n_i}$. For $g=(g_1, \cdots, g_k)\in G$,
 we set
 \be\label{gpm} g^{\pm}=(g_1^{\pm}, \cdots, g_k^{\pm})\in \F .\ee 

\medskip 

The shadows are important tools in our proof:
\begin{Def}[Shadows] For $R>0$ and $x\in X$,
the shadow $O_R(o, x)$ is defined as
\be \label{shadow} O_R(o, x)=\{\eta\in \cal F: \exists g\in K, a \in A^+ \text{ s.t } g^+=\eta \text{ and }
d(g a o, x)\le R \}. \ee
\end{Def}

\subsection*{Proof of Theorem \ref{m0}} For $\xi\in \F$ and
$r>0$, let $B(\xi, r)$ denote the ball in $\F=\prod_{i = 1}^k \S^{n_i-1}$ centered at $\xi$ of radius $r$.
For $g=(g_i)$, we write 
$$\min \mu(g) = \min d_i(o_i, g_i o_i).$$

For each $N \in \mathbb N$, let $\La_N:=\La\cap \limsup_{\ga\in \Ga} O_N(o, \ga o)$, that is, $$\La_N = \{ \xi \in \La : \exists \ga_{\ell} \to \infty \mbox{ in } \Ga \mbox{ such that } \xi \in O_N(o, \ga_\ell o) \mbox{ for all } \ell \ge 1 \}.$$ 
There exists a constant $c_N > 0$ such that
for any $\ga \in \Ga$, 
the shadow $O_N(o, \ga o)$ is contained in a ball
$B(\xi_{\ga}, c_N e^{-\min \mu(\ga)})$ for some
$\xi_\ga\in \F$; in particular, the diameter of $O_N(o, \ga o)$ is at most
$2c_N e^{-\min \mu(\ga)}$.

It follows from Theorem \ref{p0} that if a sequence $\ga_i\in \Ga$ tends to infinity,
then $\min \mu(\ga_i) \to \infty$.
Hence, for any fixed $t > 0$, we have \be\label{lambda}
\La_N \subset \bigcup_{\ga \in \Ga, \min \mu(\ga) > t} O_N(o, \ga o)
.\ee

Let $s>\delta_{\min}$ be any number. Since $\sum_{\ga \in \Ga} e^{- s \min \mu(\ga)} < \infty$, 
$$\lim_{t\to \infty} \sum_{\ga \in \Ga, \min \mu(\ga) > t} e^{- s \min \mu(\ga)}=0.$$ 

Since $\{ O_N(o, \ga o) : \ga \in \Ga, \min \mu(\ga) > t\}$
is a cover of $\La_N$ with subsets whose diameter is at most
$2c_Ne^{-\min \mu(\ga)}$, this implies that the $s$-dimensional Hausdorff measure of $\La_N$
is equal to zero. Since $s>\delta_{\min}$ is arbitrary, we get
 $$\dim \La_N \le \delta_{\min}.$$  Since $\La $ is equal to the conical limit set $\La_{\mathsf c}$ by \eqref{con}, we have $\La = \cup_{N \in \N} \La_N$. Consequently, 
\be\label{cons} \dim \La  \le \sup_{N\in \mathbb N} \dim \La_N \le \delta_{\min}.\ee 

Since, for any $s>0$, 
$$\sum_{\ga \in \Ga} e^{-s \min \mu(\ga)} \le \sum_{i = 1}^k \sum_{\sigma \in \Delta} e^{-s d_i(\rho_i(\sigma)o, o)},$$
the series $\sum_{\ga \in \Ga} e^{-s \min \mu(\ga)}$ converges when $s > \max_i \delta_{\rho_i}.$ It implies that 
\be \label{dmin}
\delta_{\min} \le \max_i \delta_{\rho_i}.
\ee 
Since the projection map $\La \to \La_{\rho_i}$ is Lipschitz, we have
\be \label{lip}
\max_i \dim \La_{\rho_i} \le \dim \La.
\ee
By combining \eqref{cons}, \eqref{dmin}, \eqref{lip},
and the fact that $\dim \La_{\rho_i} = \delta_{\rho_i}$ by Sullivan \cite{Su}, 
we have $$\max_i \dim \La_{\rho_i} \le \dim \La \le \delta_{\min}\le  \max_i  \delta_{\rho_i}
=\max_i \dim \La_{\rho_i} .$$   This finishes the proof.

\section{Fibered dynamical systems and $\ker\psi_u$-coordinate map} \label{fibdyn}
Recall that for $u \in \inte \L$, $\psi_u$ denotes the unique linear form tangent to $\psi_{\Ga}$ at $u$, as was given in Theorem \ref{p2}).
In this section, for each direction $u$ in the interior of the limit cone,
we discuss  the $\ker \psi_u$-trivial bundle over a compact space $Z$, associated to the dynamics of one-dimensional diagonal flow in the direction $u$, and show that
the $\ker\psi_u$-coordinate map $\hat K_u(z,t)$ of this bundle decays sublinearly as time
$t\to \infty$ for almost all $z$ with respect to the measure $\mathsf{m}_u$ of maximal entropy (see Theorem \ref{zo} and Corollary \ref{ae}). The results in this section will be used as main ingredients of our estimates on the Hausdorff dimension of directional limit sets in section \ref{loc}.

We continue to use notation 
$M_i, K_i, A_i, M, K, A, g^{\pm}$ from the last section.
 Now for each $1\le i\le k$, the map $$[g_i]\to (g_i^+, g_i^-, \beta_{g_i^+}(o_i, g_i o_i))$$ gives an $\SO^{\circ}(n_i, 1)$-equivariant homeomorphism between 
$\SO^{\circ}(n_i, 1) /M_i$ and $\{(\xi_i, \eta_i)\in \S^{n_i-1}\times \S^{n_i - 1}: \xi_i\ne \eta_i\} \times \br$,
where the left $\SO^{\circ}(n_i, 1)$-action on the latter space is given by
$$g_i \cdot (\xi_i,\eta_i,s)=(g_i\xi_i,g_i\eta_i,s+\beta_{\xi_i}(g_i^{-1}o_i,o_i)) .$$

This homeomorphism is called the Hopf parametrization of 
$\SO^{\circ}(n_i, 1)/M_i$ under which the right $A_i$-action on $\SO^{\circ}(n_i, 1)/M_i$
corresponds to the translation flow on $\br$.

For $\xi\in \F=\prod_{i=1}^k \mathbb S^{n_i-1}$,
we write $\xi_i$ for its $i$-th component.
We set $\F^{(2)}=\{(\xi, \eta)\in \cal F\times \cal F: \xi_i\ne\eta_i\text{ for all $i$}\}$.
Then the Hopf parametrization of $\SO^{\circ}(n_i, 1)/M_i$
extends to the Hopf parametrization of $G/M$ componentwise, and gives the $G$-equivariant homeomorphism 
 $G/M \simeq \cal F^{(2)}\times \fa$ given by
$$[g]\to ( g^+, g^-, \beta_{g^+}(o, g o))\quad\text{where} \quad  g^{\pm}=(g_i^{\pm}).$$
 
 Set  $\La^{(2)}=\F^{(2)}\cap (\La\times \La)$. Then $\Omega:=\Ga\ba (\La^{(2)}\times \fa)$ is identified with the closed subspace  $\{[g]\in \Ga\ba G/M: g^{\pm}\in \La\} $ of $\Ga\ba G/M$ via the Hopf parameterization.
 
\subsection*{Trivial $\ker \psi_u$-vector bundle}

 We fix a unit vector $u\in \inte\L$ in the rest of this section.
Consider the $\Ga$-action on the space
$\La^{(2)}\times\bb R$ by
$$\ga \cdot (\xi,\eta,s)=(\ga\xi,\ga\eta,s+\psi_u(\beta_{\xi}(\ga^{-1}o,o))).
$$ 

The reparametrization theorems for Anosov groups (\cite[Prop. 4.1]{BCLS}, \cite[Thm. 4.15]{CS})
imply that $\Ga$ acts properly discontinuously and cocompactly on $\La^{(2)}\times\bb R$. Hence $Z:=\Ga\ba (\La^{(2)}\times\bb R)$ is a compact space.
Now the $\G$-equivariant projection 
$\La^{(2)}\times \fa \to\La^{(2)}\times \br$ given  by 
$(\xi, \eta, v)\mapsto (\xi, \eta, \psi_u(v))$
induces an affine bundle with fiber $\ker \psi_u$: 
$$\pi: \Omega=\Ga\ba (\La^{(2)}\times \fa) \to Z=\Ga\ba (\La^{(2)}\times \br).$$

It is well-known that such a bundle is indeed a trivial vector bundle, and hence we can choose 
a continuous global section $$\mathsf s:Z\to \Omega$$ so that $\pi \circ \mathsf s =\op{id}_Z$. Denote by $\{\tau_t: t\in \br\}$ the flow on $Z$ given by 
translations by $t$ on $\br$.  For $v=(v_1, \cdots, v_k)\in \fa$, 
we write 
$$a_{v}=(a_{v_1}^{(1)}, \cdots, a_{v_k}^{(k)})\in A.$$ 
 \begin{Def}[$\ker\psi_u$-coordinate map] We define a continuous $\ker\psi_u$-valued map
 $$\hat K_u :Z\times  \bb R \to \ker \psi_u$$
as follows:
for $z\in Z$ and $t\in \br$,
\begin{equation}\label{eq.K} 
    \mathsf s(z)a_{tu} =\mathsf s(z\tau_t)a_{\hat K_u(z,t)}.
\end{equation}
\end{Def} 

Let $\mathsf{m}_u$ denote the $\psi_u$-Bowen-Margulis-Sullivan measure on $Z$; that is,
$\mathsf{m}_u$ is the unique $\tau_t$-invariant probability measure on $Z$  which is locally equivalent to $\nu_u\otimes \nu_u \otimes ds$. It follows from \cite{BCLS} that $\mathsf{m}_u$ is the measure of maximal entropy and in particular ergodic for the $\tau_t$-flow.
\begin{Thm}\label{zo}
For $\mathsf m_u$-a.e. $z\in Z$, we have
\be\label{lim}
\lim_{t\to\infty}\frac{1}{t}\hat K_u(z,t)=0.
\ee 
\end{Thm}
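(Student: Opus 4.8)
The plan is to reduce the statement to an ergodic-theoretic estimate about the cocycle $\hat K_u$, and then to invoke ergodicity of $\mathsf m_u$ together with integrability of the increment. First I would observe that $\hat K_u$ is an \emph{additive cocycle} over the flow $\{\tau_t\}$: from \eqref{eq.K}, applying the flow for time $t+r$ and splitting it as time $t$ followed by time $r$ gives
$$\hat K_u(z, t+r) = \hat K_u(z\tau_t, r) + \hat K_u(z, t)$$
(using that the $A$-action is abelian, so $a_{tu}a_{ru} = a_{(t+r)u}$ and the $\ker\psi_u$-displacements add). Thus $t \mapsto \hat K_u(z,t)$ is a $\ker\psi_u$-valued cocycle, continuous in both variables, and since $Z$ is compact, the single-time increment $z \mapsto \hat K_u(z,1)$ is a bounded (hence $\mathsf m_u$-integrable) continuous function on $Z$.

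Next I would apply the Birkhoff ergodic theorem to the discrete-time system $\tau_1$ with the $\R^{k-1}$-valued integrable observable $\Phi(z) := \hat K_u(z,1)$. Telescoping the cocycle relation gives $\hat K_u(z,n) = \sum_{j=0}^{n-1}\Phi(z\tau_j)$, so for $\mathsf m_u$-a.e.\ $z$,
$$\frac{1}{n}\hat K_u(z,n) \longrightarrow \int_Z \Phi \, d\mathsf m_u$$
as $n\to\infty$ through integers, and a standard argument using boundedness of $\hat K_u$ on $Z\times[0,1]$ upgrades this to the continuous limit $\frac1t \hat K_u(z,t) \to \int_Z\Phi\,d\mathsf m_u$. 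It therefore remains to show that this limiting drift vanishes: $\int_Z \hat K_u(z,1)\, d\mathsf m_u(z) = 0$. The key point is that $\mathsf m_u$ is the measure of maximal entropy, and in particular it is invariant under the \emph{full} right $A$-action on $\Omega = \Gamma\backslash(\Lambda^{(2)}\times\fa)$ (being locally equivalent to $\nu_u\otimes\nu_u\otimes ds$, which is manifestly $A$-invariant), whereas the flow $\tau_t$ on $Z$ is the image of the $a_{tu}$-action. Pushing the $A$-invariant measure on $\Omega$ down and comparing the two descriptions of $\mathsf s(z)a_{tu}$ in \eqref{eq.K} shows that the $\ker\psi_u$-displacement must average to zero — intuitively, if it had nonzero mean $w\in\ker\psi_u$, then the $a_{(tu + tw/|t|\cdots)}$-orbits would escape, contradicting recurrence of $\mathsf m_u$ (which has full support on the compact $Z$). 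Concretely I would argue: by Poincar\'e recurrence for the $\mathsf m_u$-preserving map $\tau_1$, a.e.\ $z$ returns near itself infinitely often, forcing $\hat K_u(z,n_\ell)$ to stay bounded along a subsequence $n_\ell\to\infty$; combined with the Birkhoff limit $\frac1{n_\ell}\hat K_u(z,n_\ell)\to \int\Phi\,d\mathsf m_u$, this forces $\int\Phi\,d\mathsf m_u = 0$.

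The main obstacle I anticipate is the last step — rigorously identifying the Birkhoff average with $0$. The recurrence argument is clean but one must be careful that "returns near itself in $Z$" genuinely controls $\hat K_u$ and not merely the base point: a return $z\tau_{n_\ell}$ close to $z$ in $Z$ means $\mathsf s(z)a_{n_\ell u}$ and $\mathsf s(z)$ differ by $a_{\hat K_u(z,n_\ell)}$ times something close to the identity in $\Gamma\backslash G/M$, and one needs the properness of the $\Gamma$-action on $\Lambda^{(2)}\times\fa$ (the reparametrization theorem, already cited) to conclude that $\hat K_u(z,n_\ell)$ lies in a bounded set — otherwise the displacement could be absorbed by a nontrivial element of $\Gamma$. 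Once that is in place, dividing by $n_\ell$ kills the bounded quantity and pins the drift to zero. An alternative route to the same conclusion, which I would mention as a remark, is to note that $\int_Z \hat K_u(\cdot,1)\,d\mathsf m_u$ is, by the cocycle/coboundary structure of a \emph{trivial} bundle, the $\mathsf m_u$-average of an exact form, hence zero; but the recurrence argument is more self-contained and avoids making the triviality quantitative.
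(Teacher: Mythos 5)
Your reduction to the statement $\int_Z \hat K_u(\cdot,1)\,d\mathsf{m}_u=0$ is fine as far as it goes (the cocycle identity and the Birkhoff/telescoping step are correct, modulo the minor point that $\tau_1$-ergodicity is not given and one should pass back to the flow, which is fixable since the a.e.\ limit of $\tfrac1n\hat K_u(\cdot,n)$ is flow-invariant). The genuine gap is in the step where you identify this mean as zero. Poincar\'e recurrence for the finite measure system $(Z,\mathsf{m}_u,\tau_1)$ only controls the base point: a return $z\tau_{n_\ell}$ close to $z$ in the compact space $Z$ gives no bound whatsoever on the fiber coordinate $\hat K_u(z,n_\ell)$, because the point $\mathsf s(z)a_{n_\ell u}$ lives in the noncompact bundle $\Omega=\Ga\ba(\La^{(2)}\times\fa)$ and can be arbitrarily far out in the $\ker\psi_u$-direction while its projection to $Z$ returns near $z$. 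Properness of the $\Ga$-action on $\La^{(2)}\times\fa$ does not rescue this: the element of $\Ga$ realizing the return in $Z$ is exactly what absorbs a large $\ker\psi_u$-displacement. What your argument would actually require is recurrence (conservativity) of the skew-product flow $a_{tu}$ on $(\Omega,\mathsf{m}_u\otimes\op{Leb}|_{\ker\psi_u})$, an \emph{infinite}-measure system where recurrence is not automatic; indeed, by the dichotomy of Burger--Landesberg--Lee--Oh quoted later in the paper, for $k\ge 4$ this directional flow is totally dissipative, yet Theorem \ref{zo} is asserted (and proved) for all $k$, so a recurrence-based mechanism cannot be the right one, and even for $k\le 3$ conservativity is a deep theorem rather than an available input. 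More bluntly: any continuous $\R^{k-1}$-valued cocycle with nonzero mean over an ergodic flow on a compact space is a counterexample to the implication ``base recurrence $\Rightarrow$ bounded displacement along return times,'' so no argument using only measure-preservation and recurrence of $(Z,\mathsf m_u)$ can pin the drift to zero.

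The vanishing of the drift is a specific dynamical fact about the pair $(\hat K_u,\mathsf{m}_u)$, encoding the tangency of $\psi_u$ to $\psi_\Ga$ at $u$ (equivalently, that the drift of the full $\fa$-valued cocycle under the equilibrium state $\mathsf m_u$ is parallel to $u$). This is exactly what the paper imports: combining the reparametrization theorem \cite[Prop. 4.1]{BCLS} with \cite[Prop. 3.5]{Samb}, one writes $\hat K_u(z,t)=\int_0^t F(z\tau_s)\,ds+E(z)-E(z\tau_t)$ with $F$ H\"older, $\int_Z F\,d\mathsf m_u=0$ and $E$ bounded, and then Birkhoff finishes the proof. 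Your alternative remark that the mean vanishes ``by the cocycle/coboundary structure of a trivial bundle'' is likewise unfounded: triviality of the bundle gives a global section, hence the definition of $\hat K_u$, but it does not make $\hat K_u$ a coboundary nor force its mean to vanish. So the missing ingredient is precisely the thermodynamic-formalism input identifying $\int_Z\hat K_u(\cdot,1)\,d\mathsf m_u=0$; without it the proposal does not prove the theorem.
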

\begin{proof}

Combining the reparametrization theorem \cite[Prop. 4.1]{BCLS} and \cite[Prop. 3.5]{Samb},  we deduce that
there exists a H\"older continuous function $F : Z\to\ker\psi_u$ with $\int_{Z} F\,d\mathsf{m}_u=0$ such that
for all $z\in Z$ and $t\in \br$,
\be\label{hatk}  \hat K_u(z,t)=\int_0^t F(z\tau_s)\,ds + E(z)- E(z\tau_t)\ee  for some bounded measurable function $E:Z\to \ker \psi_u$. 
The Birkhoff ergodic theorem for the $\tau_s$ flow on $(Z, \mathsf{m}_u)$ implies that for $\mathsf{m}_u$-almost all $z\in Z$, we have
$$\lim_{t\to \infty} \frac{1}{t} \int_0^t F(z\tau_s)\,ds= \int_Z F d\mathsf{m}_u =0;
$$
 hence 
$$
\lim_{t\to\infty}\frac{1}{t}\hat K_u(z,t)=0
$$
since $E$ is bounded. 
\end{proof}

Fix a compact subset $D\subset G/M$ such that $\mathsf s(Z)=\Ga\ba\Ga D$, and for each $z\in Z$, write $\mathsf s(z)=\Ga \tilde{\mathsf s}(z)$ for some $\tilde{\mathsf s}(z)\in D$. Hence 
\be\label{every} \La^{(2)}\times \fa =\Gamma D a_{ \ker \psi_u}.\ee
We will sometimes consider $D$ as a right $M$-invariant subset of $G$  by abuse of notation.

\begin{lem} \label{lem.jan19}
 For any $g\in G$
with $g^{\pm}\in \La$, there exist $z_g\in Z$ and $w_g\in \ker \psi_u$ such that for all $t\in \br$, there exists $\ga_{g,t}\in \Ga$ satisfying
\be\label{gt} \ga_{g, t} g a_{tu}=  \tilde{\mathsf s}(z_g\tau_t) a_{\hat K_u(z_g,t)+w_g}.\ee 
\end{lem}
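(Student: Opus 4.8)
The plan is to unwind the definitions from \eqref{every} and \eqref{eq.K}, tracking the $\ker\psi_u$-coordinate carefully. First I would apply \eqref{every} to the point $gM \in G/M$: since $g^\pm\in\La$, the class $[g]$ lies in $\Omega=\Ga\ba(\La^{(2)}\times\fa)$, so there exist $\ga\in\Ga$, a point $z_g\in Z$, and a vector $w_g\in\ker\psi_u$ with $\ga g a_{w_g}^{-1}\in \tilde{\mathsf s}(z_g)M$, i.e.\ (absorbing the $M$ on the right, which commutes with $A$ up to the conjugation action that fixes $A$ pointwise) we may write $\ga_{g,0}\, g = \tilde{\mathsf s}(z_g)\, a_{w_g}$ for a suitable $\ga_{g,0}\in\Ga$. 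This sets up the $t=0$ instance of \eqref{gt}, since $\hat K_u(z_g,0)=0$ by \eqref{eq.K}.

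Next I would push the $A$-flow through. Right-multiply the $t=0$ identity by $a_{tu}$: $\ga_{g,0}\, g\, a_{tu} = \tilde{\mathsf s}(z_g)\, a_{w_g}\, a_{tu} = \tilde{\mathsf s}(z_g)\, a_{tu}\, a_{w_g}$, using that $A$ is abelian. Now I invoke the defining relation \eqref{eq.K} for the section $\mathsf s$: $\mathsf s(z_g)a_{tu} = \mathsf s(z_g\tau_t)a_{\hat K_u(z_g,t)}$. Lifting this to $G/M$ and recalling $\mathsf s(z)=\Ga\tilde{\mathsf s}(z)$, there is an element $\ga'_t\in\Ga$ with $\ga'_t\,\tilde{\mathsf s}(z_g)\,a_{tu} = \tilde{\mathsf s}(z_g\tau_t)\,a_{\hat K_u(z_g,t)}$ (again the residual $M$-ambiguity is harmless since $D$ is right $M$-invariant and $M$ centralizes $A$). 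Combining, $\ga'_t\,\ga_{g,0}\, g\, a_{tu} = \tilde{\mathsf s}(z_g\tau_t)\,a_{\hat K_u(z_g,t)}\,a_{w_g} = \tilde{\mathsf s}(z_g\tau_t)\,a_{\hat K_u(z_g,t)+w_g}$, so setting $\ga_{g,t}:=\ga'_t\ga_{g,0}\in\Ga$ yields \eqref{gt}.

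The point I expect to need the most care is the bookkeeping of the right $M$-factors: the Hopf/section picture lives on $G/M$, whereas \eqref{gt} is an identity in $G$, so one must check that the $M$-ambiguities can be consistently absorbed. This works because $M$ centralizes $A$ (so $m\,a_v = a_v\,m$ for $m\in M$, $v\in\fa$) and because $D$ was chosen right $M$-invariant, allowing us to replace $\tilde{\mathsf s}(z)m$ by $\tilde{\mathsf s}(z)$ after adjusting the chosen representative; one should state once and for all that all the displayed equalities are understood modulo this right $M$-action, or equivalently fix lifts $\tilde{\mathsf s}(z)\in D\subset G$ and note the cocycle identities hold on the nose for these lifts. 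Everything else is a formal manipulation of the $G$-equivariance of the Hopf parametrization and the abelian-ness of $A$; no estimates or dynamics enter here, so the lemma is essentially a packaging statement that will feed into the local analysis of $\nu_u$ in the next section.
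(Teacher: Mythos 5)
Your proof is correct and follows essentially the same route as the paper: apply \eqref{every} once to write $\ga g=\tilde{\mathsf s}(z_g)a_{w_g}$, then use the defining relation \eqref{eq.K} (lifted to $G$ via a representative $\ga_{z,t}\in\Ga$) and commutativity of $A$ to combine the two identities, setting $\ga_{g,t}=\ga_{z,t}\ga$. The extra care you take with the right $M$-ambiguity is consistent with the paper's convention of viewing $D$ as a right $M$-invariant subset of $G$, so nothing further is needed.
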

\begin{proof} By \eqref{every},
there exist $\ga\in\Ga$, $z\in Z$ and $w\in\ker\psi_u$ such that
$\ga g=\tilde{\mathsf s}(z)a_{w}$, and hence
$$
\ga ga_{tu}=\tilde{\mathsf s}(z)a_{tu+w}.
$$
On the other hand, by \eqref{eq.K}, there exists $\ga_{z, t}\in\Ga$ such that
$$
\ga_{z,t}\tilde{\mathsf s}(z)a_{tu}=\tilde{\mathsf s}(z\tau_t) a_{\hat K_u(z,t)}.
$$
Therefore,
$$
\ga_{z,t}\ga ga_{tu}=\tilde{\mathsf s}(z\tau_t) a_{\hat K_u (z,t) +w}.
$$
It remains to set $\ga_{g,t}=\ga_{z,t}\ga$.\end{proof}

 For each $g \in G$ with $g^{\pm} \in \La$, we choose $z_g \in Z$ and $w_g \in \ker \psi_u$ as given by Lemma \ref{lem.jan19}.
We also set  $K^\dagger_u (g,t):= \hat K_u (z_g,t) +w_g\in \ker \psi_u$, so that for all $t\in \br$,
\be\label{gaa} \ga_{g,t} g a_{tu}\in D  a_{K^\dagger_u (g,t)}.\ee

\begin{cor}\label{ae}
For $\nu_u$-a.e. $\xi \in \La$, there exists $\La(\xi) \subset \La$ with $\nu_u(\La(\xi)) = 1$ such that for any $g \in G$ with $g^+ = \xi$ and $g^- \in\La(\xi)$, we have 
\be\label{kug}
\lim_{t\to\infty}\frac{1}{t}  K^\dagger_u (g,t)=0.
\ee 
\end{cor}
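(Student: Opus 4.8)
The plan is to transfer the almost-everywhere statement of Theorem \ref{zo}, which lives on the compact space $Z$ with respect to $\mathsf m_u$, to the boundary $\La$ with respect to $\nu_u$, using the fact that $\mathsf m_u$ is locally equivalent to $\nu_u \otimes \nu_u \otimes ds$. First I would recall the relation $K^\dagger_u(g,t) = \hat K_u(z_g, t) + w_g$ from \eqref{gaa}: since $w_g$ is a fixed vector (independent of $t$), dividing by $t$ kills it in the limit, so \eqref{kug} holds for $g$ if and only if $\lim_{t\to\infty} \frac1t \hat K_u(z_g, t) = 0$, i.e. if and only if $z_g$ lies in the full-$\mathsf m_u$-measure set $Z_0 \subset Z$ provided by Theorem \ref{zo}. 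So the entire task reduces to showing: for $\nu_u$-a.e.\ $\xi \in \La$ there is a full-measure set $\La(\xi) \subset \La$ such that for every $g \in G$ with $g^+ = \xi$, $g^- \in \La(\xi)$, the point $z_g$ lies in $Z_0$.

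Next I would unwind how $z_g$ depends on $g$. Tracing through the proof of Lemma \ref{lem.jan19}: writing $g$ in Hopf coordinates as $(g^+, g^-, \beta_{g^+}(o, go))$, one has $\ga g = \tilde{\mathsf s}(z) a_w$ for suitable $\ga \in \Ga$, $z \in Z$, $w \in \ker\psi_u$, and $z_g = z$. Under the identification $Z = \Ga \backslash (\La^{(2)} \times \br)$, the point $z_g$ is simply the image of $(g^+, g^-, \psi_u(\beta_{g^+}(o, go)))$; in particular $z_g$ depends on $g$ only through the pair $(g^+, g^-)$ together with the $\br$-coordinate, and crucially the condition ``$z_g \in Z_0$'' is invariant under the $\tau_t$-flow — if $z_g \in Z_0$ then $z_g \tau_t \in Z_0$ for all $t$ by construction of $Z_0$ (it is a forward-and-backward flow-invariant full measure set, since the conclusion of Theorem \ref{zo} is flow-invariant). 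Hence ``$z_g \in Z_0$'' depends only on $(g^+, g^-) \in \La^{(2)}$, not on the $\br$-coordinate. So define $P \subset \La^{(2)}$ to be the set of $(\xi, \eta)$ such that the corresponding $z$ lies in $Z_0$; this is a $\Ga$-invariant, $\nu_u \otimes \nu_u$-conull subset of $\La^{(2)}$, because $\mathsf m_u \sim \nu_u \otimes \nu_u \otimes ds$ locally and $\mathsf m_u(Z \setminus Z_0) = 0$ forces the fiberwise-constant-in-$s$ bad set to have $\nu_u \otimes \nu_u$-measure zero.

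Finally I would apply Fubini. Since $(\nu_u \otimes \nu_u)(\La^{(2)} \setminus P) = 0$, Fubini's theorem gives a $\nu_u$-conull set of $\xi \in \La$ for which the slice $\La(\xi) := \{\eta \in \La : (\xi, \eta) \in P\}$ is $\nu_u$-conull; here I also intersect with the conull set of $\xi$ for which $(\{\xi\} \times \La) \subset \F^{(2)}$ up to a null set (automatic since a single point is $\nu_u$-null, as $\nu_u$ is non-atomic for a Zariski dense Anosov group). For such $\xi$ and any $g$ with $g^+ = \xi$ and $g^- \in \La(\xi)$, we get $z_g \in Z_0$, hence $\frac1t \hat K_u(z_g, t) \to 0$, hence $\frac1t K^\dagger_u(g,t) \to 0$ as $w_g$ is bounded. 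The main obstacle, and the step deserving the most care, is the claim that ``$z_g \in Z_0$ is independent of the $\br$-coordinate of $g$'' — i.e.\ that one can cleanly disintegrate the measure $\mathsf m_u$ along the $\br$-fibers of $Z$ and conclude that the bad set, being essentially a union of full $\br$-fibers over a subset of $\La^{(2)}$, pushes down to a $\nu_u \otimes \nu_u$-null set of pairs; this uses both the flow-invariance of the Theorem \ref{zo} conclusion and the local product structure of $\mathsf m_u$, and should be spelled out carefully.
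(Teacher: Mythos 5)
Your proposal is correct and follows essentially the same route as the paper: both transfer Theorem \ref{zo} to the boundary via the local equivalence $\mathsf m_u \sim \nu_u\otimes\nu_u\otimes ds$ and Fubini, use the flow-invariance (and $\Ga$-invariance) of the good set to dispose of the $\br$-coordinate, identify $z_g$ with the class of $(g^+,g^-,\cdot)$ in $Z$, and note that the constant $w_g$ is killed after dividing by $t$. No gaps of substance; the step you flag as delicate is exactly the one the paper handles by requiring the convergence for all $s\in\br$ and taking $\La(\xi)$ to be $\Ga$-invariant.
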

\begin{proof}
Since $\mathsf m_u$ is equivalent to $\nu_u \otimes \nu_u \otimes ds$, Theorem \ref{zo} implies that for $\nu_u$-a.e. $\xi \in \La$, there exists a $\Ga$-invariant measurable subset $\La(\xi) \subset \La$ with $\nu_u(\La(\xi)) = 1$ such that $\lim_{t \to \infty} \frac{1}{t} \hat K_u([(\xi, \eta, s)], t) = 0$ for any $\eta \in \La(\xi)$ and $s \in \R$. For each $\xi$ satisfying this, let $g \in G$ with $g^+ = \xi$ and $g^- \in \La(\xi)$. It suffices to show that $g$ satisfies \eqref{kug}.
Let $z=[(\xi_1, \eta_1, s)]\in Z$ be such that
$\gamma_{g,0} g  =\tilde s(z) a_{K^\dagger(g, 0)}$ as given by Lemma \ref{lem.jan19} and \eqref{gaa}.
 It follows that $g^+ = \xi \in \Ga \xi_1$ and $g^- \in \Ga \eta_1$, and hence $z_g = [(\xi, \eta_1, s)]$ for $\eta_1 \in \La(\xi)$. Therefore, $\hat K_u(z_g, t)/t \to 0$ as $t \to \infty$. Since $K^{\dagger}_u(g, t) - \hat K_u(z_g, t) = w_g$ which is independent of $t$, we get
 $ K^\dagger_u (g,t)/t \to 0$ as $t\to \infty$.

\end{proof}

Let $\m^{\BMS}_u$ denote the Bowen-Margulis-Sullivan measure
on $\Omega\subset \Ga\ba G$ given by $\m^{\BMS}_u=\m_u\otimes \op{Leb}|_{\ker \psi_u}\otimes dm$ where $dm$ denotes the Haar measure on $M$; this is an $A$-invariant ergodic (infinite) Radon measure, as shown in \cite{LO2}. We also remark that by \cite{BLLO},
$\m^{\BMS}_u$  is $\{a_{tu}:t\in \br\}$-ergodic if and only if $k\le 3$.
In terms of this measure, Corollary \ref{ae} can be formulated as the following which may be regarded as an analogue of Sullivan's result \cite[Coro. 19]{Su}, which predates his logarithm law.

\begin{Thm} For $\m^{\BMS}_u$-a.e. $x\in \Gamma\ba G$,
we have $$\lim_{t\to \infty }\frac{ d(x a_{tu} o, o)}{t}=0 .$$
\end{Thm}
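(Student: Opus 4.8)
The plan is to deduce this statement from Corollary \ref{ae} by unwinding the identification between $\Omega \subset \Ga\ba G$ and the fibered picture, and by relating the $\ker\psi_u$-coordinate map to the distance $d(xa_{tu}o, o)$. First I would observe that since $\m^{\BMS}_u = \m_u \otimes \op{Leb}|_{\ker\psi_u}\otimes dm$ is locally equivalent to $\nu_u \otimes \nu_u \otimes \op{Leb}_{\fa}\otimes dm$ under the Hopf parametrization $\Omega = \Ga\ba(\La^{(2)}\times\fa)$ (right-$M$-extended), a $\m^{\BMS}_u$-conull subset of $\Gamma\ba G$ projects to a conull set of pairs $(\xi,\eta)\in\La^{(2)}$, and by Corollary \ref{ae} we may assume every such $x = \Gamma g M$ is represented by some $g\in G$ with $g^+ = \xi$, $g^- = \eta$, and $\eta\in\La(\xi)$, so that $K^\dagger_u(g,t)/t \to 0$ as $t\to\infty$.

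Next I would translate $\|\mu(g a_{tu})\| = d(g a_{tu} o, o)$ into a bound in terms of $K^\dagger_u(g,t)$. By \eqref{gaa} there is $\ga_{g,t}\in\Ga$ with $\ga_{g,t} g a_{tu} \in D a_{K^\dagger_u(g,t)}$; since $d(go,o) = d(\ga g o, \ga o)$ is not $\Ga$-invariant, I instead use that $d(xa_{tu}o,o)$ is the distance in $\Ga\ba X$ from the point $\Gamma a_{tu}$-translate, which only depends on the coset $\Gamma g a_{tu} M$, not on the representative; hence I may replace $g a_{tu}$ by $\ga_{g,t} g a_{tu} = \delta_{g,t}\, a_{K^\dagger_u(g,t)}$ with $\delta_{g,t}\in D$. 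Writing $v_t := K^\dagger_u(g,t) \in \ker\psi_u$ and noting $D$ is compact, there is a constant $R = R(D) > 0$ with $d(\delta_{g,t} o, o)\le R$ for all $t$, so by the triangle inequality in $X$ (and $K$-invariance of $d$ together with $\|\mu(\delta_{g,t} a_{v_t})\| \le \|\mu(\delta_{g,t})\| + \|\mu(a_{v_t})\|$, using that Cartan projection is coarsely subadditive in rank one products),
\[
d(x a_{tu} o, o) \;\le\; \|v_t\| + 2R .
\]
Dividing by $t$ and letting $t\to\infty$, Corollary \ref{ae} gives $\|v_t\|/t\to 0$, hence $d(xa_{tu}o,o)/t\to 0$.

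The main point requiring care, and what I expect to be the chief obstacle, is the bookkeeping in the previous step: making precise that $d(xa_{tu}o,o)$ depends only on the $M$-coset of $\Gamma g a_{tu}$ and that the compact set $D$ (chosen after \eqref{every}) provides a uniform bound $R$, and that combining $a_{v_t}$ with an element of $D$ costs only an additive constant in the Cartan projection. All of this is elementary — it uses only that $A$ commutes with itself, that $\mu(g_1 g_2)\le \mu(g_1)+\mu(g_2)$ coordinatewise up to bounded error for convex-cocompact rank one factors (in fact exactly $d_i(g a o, o) \le d_i(go,o) + |a|$ in each $\bH^{n_i}$ factor), and compactness of $D$ — but it must be stated cleanly. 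Once the inequality $d(xa_{tu}o,o) \le \|K^\dagger_u(g,t)\| + O(1)$ is in hand, the theorem is immediate from Corollary \ref{ae} and the fact that $\m^{\BMS}_u$ is, up to the local equivalence above, supported on pairs to which that corollary applies.
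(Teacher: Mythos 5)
Your proof is correct and follows essentially the same route as the paper, whose entire argument is the remark that the theorem follows from Corollary \ref{ae} in view of \eqref{gaa} and the compactness of $D$ --- exactly the bound $d(xa_{tu}o,o)\le \|K^\dagger_u(g,t)\|+O(1)$ that you derive. The bookkeeping you spell out (interpreting $d$ as the quotient metric on $\Gamma\backslash X$, replacing $ga_{tu}$ by $\ga_{g,t}ga_{tu}\in D\,a_{K^\dagger_u(g,t)}$, and the triangle inequality with the uniform bound from $D$) is precisely what the paper leaves implicit, together with the standard local equivalence of $\m^{\BMS}_u$ with $\nu_u\otimes\nu_u\otimes\operatorname{Leb}\otimes dm$ to pass from Corollary \ref{ae} to a $\m^{\BMS}_u$-conull set.
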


Since $D$ is compact, this theorem follows from Corollary \ref{ae} in view of \eqref{gaa}.

\section{Hausdorff dimension of $\La_u$ and local behavior of $\nu_u$}\label{loc}

For each $u=(u_1, \cdots, u_k)\in \fa^+$, 
 the $u$-directional  limit set $\La_u\subset \La$ is defined
 as $$\La_u := \{ \xi \in \F : \liminf_{t \to +\infty} d(\xi(tu_1,\cdots, tu_k), \Ga o) < \infty \}$$ 
 where $\xi(\cdot)$ is defined as in \eqref{xixi}.

In this section, we 
 obtain estimates on $\dim \La_u$ for $u\in \inte \L$. 
 We will obtain an upper bound for $\dim \La_u$ for any $k\ge 1$ but our
 lower bound is obtained only when $k\le 3$; the main reasons are
 that \begin{enumerate}
     \item the lower bound
 is deduced from local estimates on $\nu_u $ (Theorem \ref{p4}) using the mass distribution principle and
    \item the directional limit set $\La_u$ has positive
 $\nu_u$-measure if and only if  $k\le 3$ (Theorem \ref{dic}).
 \end{enumerate}

In the whole section, we fix a unit vector  
$$u = (u_1, \cdots, u_k)\in \inte \L.$$ 
However, note that the statements below still hold for an arbitrary vector in $\inte\L$
since all quantities are homogeneous. We also set
\be\label{MU} M_u=\max_{1\le i\le k}  u_i,\, m_u=\min_{1\le i\le k} u_i,\text{ and }
\delta_u:=\psi_u(u) = \psi_{\Ga}(u)>0.\ee 

\subsection*{Upper bound for dimension}
For any $N\in \mathbb N$,
set $$\Ga_{N} (u):=\{\ga\in \Ga:
\|\mu(\ga) -t_\ga u\|\le N\text{ for some $t_\ga>0$}\} $$
and
 $$\La_N^*(u) :=\limsup_{t \to \infty} \bigcup_{\ga \in \Ga_N(u), \| \mu(\ga) \| \ge t} O_N(o, \ga o) ,$$ 
 where $O_N(o,\ga o)$ is a shadow defined as in \eqref{shadow}.
 
 We will use the following simple observation:
 \begin{lem}\label{LUC} We have
 $$\La_u\subset \bigcup_{N\in \mathbb N} \La_N^*(u).$$
 \end{lem}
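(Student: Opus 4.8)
\textbf{Proof plan for Lemma \ref{LUC}.}

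The statement asserts that every $u$-directional limit point lies in $\La_N^*(u)$ for some $N$. The plan is to start from the defining condition of $\La_u$: if $\xi\in\La_u$, then $\liminf_{t\to\infty} d(\xi(tu_1,\dots,tu_k),\Ga o)<\infty$, so there is a constant $R>0$ and a sequence $t_\ell\to\infty$ together with elements $\ga_\ell\in\Ga$ such that $d(\xi(t_\ell u),\ga_\ell o)\le R$ for all $\ell$. First I would use this to control $\mu(\ga_\ell)$: since $\xi(t_\ell u)$ lies at distance $\le R$ from $\ga_\ell o$ and at distance $\|t_\ell u\|=t_\ell$ from $o$, the triangle inequality gives $|\,\|\mu(\ga_\ell)\|-t_\ell\,|\le R$, so in particular $\|\mu(\ga_\ell)\|\to\infty$. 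The key geometric point is that the vector-valued displacement $\mu(\ga_\ell)$ must be close to the ray $\br_{>0}u$: componentwise, $d_i(\ga_{\ell,i}o_i,o_i)$ differs from $d_i(\xi_i(t_\ell u_i),o_i)=t_\ell u_i$ by at most $R$ (this uses that projection to the $i$-th factor is $1$-Lipschitz for the product metric $d=\sqrt{\sum d_i^2}$), hence $\|\mu(\ga_\ell)-t_\ell u\|\le \sqrt k\,R$. Thus choosing $N\ge\sqrt k\,R$ (an integer), we get $\ga_\ell\in\Ga_N(u)$ for all $\ell$.

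Next I would show $\xi\in O_N(o,\ga_\ell o)$ for all large $\ell$, possibly after enlarging $N$. Writing $\xi=g^+$ for some $g=(g_i)\in G$ with each $g_i\in K_i$ and decomposing the geodesic ray toward $\xi$, the point $\xi(t_\ell u)=g a_{t_\ell u}o$ lies within $R$ of $\ga_\ell o$; since $a_{t_\ell u}\in A^+$ (as $u\in\inte\L\subset\inte\fa^+$, all $u_i>0$), the pair $(g,a_{t_\ell u})$ witnesses $\xi\in O_N(o,\ga_\ell o)$ directly from the definition \eqref{shadow}, with $N\ge R$. Combining the two conditions, for a suitable fixed integer $N$ (any $N\ge\sqrt k\,R$ works, as $\sqrt k\ge 1$), we have infinitely many $\ga_\ell\in\Ga_N(u)$ with $\|\mu(\ga_\ell)\|\to\infty$ and $\xi\in O_N(o,\ga_\ell o)$. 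By definition of $\limsup$ this means precisely $\xi\in\limsup_{t\to\infty}\bigcup_{\ga\in\Ga_N(u),\,\|\mu(\ga)\|\ge t}O_N(o,\ga o)=\La_N^*(u)$, which is the desired conclusion.

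The only mild subtlety — and the step I would be most careful about — is the passage from "$\xi(t_\ell u)$ is close to $\ga_\ell o$" to "$\xi\in O_R(o,\ga_\ell o)$", i.e. matching the parametrization of the geodesic ray toward $\xi$ in the product $X=\prod X_i$ with the $A^+$-action $a_{tu}$ in the definition of the shadow. Because $X$ is a Riemannian product, a unit-speed product of geodesics $t\mapsto(\xi_i(tu_i))_i$ is exactly the orbit $t\mapsto g a_{tu}o$ when $g\in K$ has $g^+=\xi$, so there is no genuine difficulty here, just bookkeeping to confirm that the $t$ appearing in $\xi(tu)$ and the $t$ in $a_{tu}$ agree; the parameter $t_\ga>0$ implicit in $\Ga_N(u)$ is then $t_\ell$ (up to the bounded error absorbed into $N$). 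Everything else is elementary triangle-inequality estimation.
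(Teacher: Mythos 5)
Your proof is correct and follows essentially the same route as the paper's: extract $t_\ell\to\infty$ and $\ga_\ell\in\Ga$ with $d(\xi(t_\ell u),\ga_\ell o)$ bounded, bound $\mu(\ga_\ell)-t_\ell u$ componentwise to place $\ga_\ell$ in $\Ga_N(u)$ (after enlarging $N$ by a factor of $\sqrt{k}$ or $k$), verify $\xi\in O_N(o,\ga_\ell o)$ via $g\in K$ with $g^+=\xi$ and $a_{t_\ell u}\in A^+$, and note $\|\mu(\ga_\ell)\|\to\infty$. Your treatment of the parametrization $\xi(tu)=ga_{tu}o$ is exactly the bookkeeping the paper does implicitly, so there is nothing to add.
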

 
 \begin{proof}
 Let $\xi \in \La_u$. Choose any $g\in K$  such that
 $g^+=\xi$. 
 Then $\xi(tu)=g a_{tu} o$, $t\ge 0$, is a geodesic ray toward $\xi$.
 By the definition of $\La_u$, there exist $N> 0$ and sequences
 $t_\ell \to \infty$, $\ga_\ell \in \Ga$ such that $d(\xi(t_\ell u), \ga_\ell o ) \le  N$ for all $\ell\ge 1$. 
 Note that the absolute value of each component of $\mu(\ga_{\ell}) - t_{\ell}u$ is bounded by $N $, and hence $\|\mu(\ga_{\ell}) - t_{\ell}u\| \le kN $. Replacing $N$ with $kN $, if necessary, 
 we may assume
 that $\ga_\ell \in \Ga_N(u)$ for all $\ell\ge 1$. By the definition of shadows
 in \eqref{shadow}, it follows
 that $\xi \in O_N(o, \ga_\ell o)$ for all $\ell\ge 1$. 
 As $\ga_\ell \in \Ga_N(u)$, we have
 $\lVert \mu(\ga_\ell) \rVert \ge \lVert u \rVert t_\ell - N  $ and hence $\mu(\ga_\ell)\to \infty$ as $\ell \to \infty$.  Therefore
 $\xi\in \La_N^*(u)$; this completes the proof.
 \end{proof}

 \begin{Thm} \label{mainprop} For any $k\ge 1$, we have
\be\label{up}  \quad \dim{} \La_u \le \frac{\delta_u}{m_u} 
 .\ee 
\end{Thm}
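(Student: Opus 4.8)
The plan is to mimic Sullivan's shadow‑covering argument from the proof of Theorem \ref{m0}, but now keeping track of the direction $u$ through the growth indicator function. By Lemma \ref{LUC}, it suffices to bound $\dim \La_N^*(u)$ for each fixed $N\in\mathbb N$, since $\La_u\subset\bigcup_N \La_N^*(u)$ and Hausdorff dimension is stable under countable unions. So I would fix $N$ and try to show $\dim \La_N^*(u)\le \delta_u/m_u$.

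First I would recall the geometric estimate on shadows already used for Theorem \ref{m0}: there is $c_N>0$ so that for every $\ga\in\Ga$, the shadow $O_N(o,\ga o)$ is contained in a ball in $\cal F$ of radius $c_N e^{-\min\mu(\ga)}$. Now the point is that for $\ga\in\Ga_N(u)$ we have $\mu(\ga)=t_\ga u+O(N)$, hence $\min\mu(\ga)\ge m_u t_\ga - N$ and $\|\mu(\ga)\|\le t_\ga + N$ (using $\|u\|=1$), so $\min\mu(\ga)\ge m_u\|\mu(\ga)\| - (1+m_u)N$. Thus each shadow $O_N(o,\ga o)$ sits in a ball of radius $\asymp_N e^{-m_u\|\mu(\ga)\|}$. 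For $s>\delta_u/m_u$, the corresponding $s$-dimensional Hausdorff sum over the cover $\{O_N(o,\ga o):\ga\in\Ga_N(u),\ \|\mu(\ga)\|\ge t\}$ of $\La_N^*(u)$ is then controlled, up to a constant depending on $N$, by
$$\sum_{\ga\in\Ga_N(u),\,\|\mu(\ga)\|\ge t} e^{-s\, m_u\|\mu(\ga)\|}.$$
To see this sum goes to $0$ as $t\to\infty$, I would use the linear form $\psi_u$ tangent to $\psi_\Ga$ at $u$: since $\psi_u\ge\psi_\Ga$, for $\ga\in\Ga_N(u)$ with $\mu(\ga)$ close to $\br_{>0}u$ we have $\psi_u(\mu(\ga))\le \psi_\Ga(u)\|\mu(\ga)\| + O(N) = \delta_u\|\mu(\ga)\| + O(N)$, equivalently $\|\mu(\ga)\| \ge \psi_u(\mu(\ga))/\delta_u - O(N)$. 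Plugging this in, the sum is bounded by a constant times $\sum_{\ga\in\Ga} e^{-(s m_u/\delta_u)\psi_u(\mu(\ga))}$, and since $s m_u/\delta_u>1$, Theorem \ref{p2}(3) (the abscissa of convergence of $\cal P_u$ is $1$) gives convergence of the full series; hence the tail sums over $\|\mu(\ga)\|\ge t$ tend to $0$. Therefore $H^s(\La_N^*(u))=0$ for all such $s$, giving $\dim\La_N^*(u)\le \delta_u/m_u$, and taking the supremum over $N$ finishes the proof. Since $u\in\inte\L$ was arbitrary and everything is homogeneous of degree $0$ in $u$, the normalization $\|u\|=1$ costs nothing.

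The main obstacle I anticipate is making the ``$\psi_u(\mu(\ga))\le\delta_u\|\mu(\ga)\|+O_N(1)$ for $\ga\in\Ga_N(u)$'' step clean and uniform: one needs that $\psi_u$, being a fixed linear form, satisfies $\psi_u(v)=\psi_u(\|v\| u) + \psi_u(v-\|v\|u)$ with the error term $\psi_u(v-\|v\|u)$ controlled whenever $v=\mu(\ga)$ lies within bounded distance $N$ of the ray $\br_{>0}u$ — which is exactly the content of $\ga\in\Ga_N(u)$, so $\|v-t_\ga u\|\le N$ and $|\,\|v\|-t_\ga\,|\le N$, hence $\|v-\|v\|u\|\le 2N$ and $|\psi_u(v-\|v\|u)|\le 2N\|\psi_u\|$. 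Combined with $\psi_u(\|v\|u)=\|v\|\psi_\Ga(u)=\delta_u\|v\|$, this is precisely what is needed. The only other mild subtlety is that the covering sets are shadows rather than literal balls, which is handled exactly as in the proof of Theorem \ref{m0} by replacing each $O_N(o,\ga o)$ with a ball of radius $2c_N e^{-\min\mu(\ga)}$ containing it and adjusting constants; the $c_N$, being independent of $\ga$, does not affect which $s$ make the Hausdorff sum vanish.
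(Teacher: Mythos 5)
Your proposal is correct and follows essentially the same argument as the paper: cover $\La_N^*(u)$ by shadows $O_N(o,\ga o)$ with $\ga\in\Ga_N(u)$, bound their diameters by (a constant times) $e^{-m_u\delta_u^{-1}\psi_u(\mu(\ga))}$ using that $\mu(\ga)$ lies within $N$ of the ray $\br_{>0}u$ and that $\psi_u(u)=\delta_u$, and then invoke Theorem \ref{p2}(3) for $s m_u/\delta_u>1$ together with Lemma \ref{LUC}; the paper routes the comparison through $t_\ga$ rather than $\|\mu(\ga)\|$, which is an immaterial difference. One cosmetic remark: your appeal to ``$\psi_u\ge\psi_\Ga$'' is not what does the work (and is not needed) --- the correct justification is the linearity computation you give at the end, using only $\psi_u(u)=\delta_u$ and $\|\mu(\ga)-\|\mu(\ga)\|u\|\le 2N$.
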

\begin{proof} Fix $N\in \mathbb N$.
For each $\ga\in \Ga_N(u)$, we fix $t_\ga>0$ such that
$\|\mu(\ga) -t_\ga u\|\le N$, which exists by the definition of $\Ga_N(u)$.
 Then there exists $d_N>0$ such that for any $\ga\in  \Ga_{N}(u)$,
the shadow $O_N(o, \ga o)$ is contained in a ball
$B(\xi_{\ga}, d_N e^{-t_\ga m_u})$ for some
$\xi_\ga\in \F$.
Since $\|\mu (\ga) -t_\ga u\|\le N$, by applying $\psi_u$,
we get $$|\psi_u(\mu(\ga))\delta_u^{-1}-t_\ga | \le N\delta_u^{-1} \|\psi_u\|_{\op{op}}$$ where $\|\psi_u\|_{\op{op}}$ denotes the operator norm of $\psi_u$. 

Therefore, for some constant $d_N'\ge 1$, we have that for all
$\ga\in \Ga_N(u)$,
we have
$$O_N(o, \ga o) \subset B(\xi_{\ga}, d_Ne^{-t_\ga  m_u}) \subset  B\left(\xi_{\ga}, d_N'e^{-m_u\delta_u^{-1} \psi_u(\mu(\ga))}\right);$$
in particular, the diameter of $O_N(o, \ga o)$ is at most
$2d_N'e^{-m_u\delta_u^{-1} \psi_u(\mu(\ga))}$.
Moreover, for any $t>1$,
$\{ O_N(o, \ga o) : \ga \in \Ga_{N}(u), \|\mu(\ga)\|\ge t\}
$ is a cover of $\La_N^*(u)$.

Let $s > \delta_u/m_u$ be any number.
By Theorem \ref{p2}(3), 
we have $$\lim_{t\to \infty} \sum_{\ga\in \Ga, \|\mu(\ga)\|\ge t} e^{-sm_u\delta_u^{-1} \psi_u(\mu(\ga))}=0 .$$
It implies that
 the $s$-dimensional Hausdorff measure of $\La_N^*(u)$ is zero. 
 Since $s > \delta_u/m_u$ is arbitrary, it follows that
 $$\dim{} \La_N^*(u) \le \frac{\delta_u}{m_u}.$$
Since $$\La_u\subset \bigcup_{N\in \mathbb N} \La_N^*(u)$$ by Lemma \ref{LUC}, this implies the desired bound:
$\dim{} \La_u \le \frac{\delta_u}{m_u}.$ 
\end{proof}
\begin{rmk}
We can replace $B(\xi_{\ga}, d_N e^{-t_{\ga}m_u})$ with $e^{t_{\ga} (M_u - m_u) \sum_{i = 1}^k (n_i - 1)}$ balls of radius $d_Ne^{-t_{\ga}M_u}$. We then have the upper bound $$\dim \La_u \le \frac{\delta_u + (M_u - m_u) \sum_{i = 1}^k (n_i-1)}{M_u}$$ which is smaller than the upper bound in \eqref{up} when $m_u \sum_{i = 1}^k (n_i - 1) < \delta_u$.
\end{rmk}

\subsection*{The local size of $\nu_u$}

We define the following subset of $\La$:
\be \label{newdef}
\La^*_u = \left\{ \xi \in \La : \begin{matrix}
\exists \La(\xi) \subset \La \mbox{ with } \nu_u(\La(\xi)) = 1 \mbox{ such that } \\ \lim_{t \to \infty} \frac{1}{t}K^{\dagger}_u(g, t) = 0 \\ \mbox{ for any } g \in G \mbox{ with } g^+ = \xi \mbox{ and } g^- \in \La(\xi)
\end{matrix} \right \} .\ee
Note that $\La_u^*$ is not necessarily a subset of $\La_u$.

 By Corollary \ref{ae}, we have
 $$\nu_u(\La_u^*)=1.$$

We will be using the following two lemmas.

\begin{lem}  \label{com}  There exists a compact subset $\cal S\subset G$ such that for any $\xi \in \La$ and for any measurable subset $\La'\subset \La$  with $\nu_u(\La') = 1$,  there exists $g\in \cal S$ such that $g^+=\xi$ and $g^-\in \La'$.
\end{lem}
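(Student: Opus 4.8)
The plan is to take $\mathcal S$ of the form $\prod_{i=1}^k\{h\in G_i:d_i(ho_i,o_i)\le R\}$ for a radius $R=R_u>0$ to be chosen, and to reduce the lemma to the following measure-theoretic statement: there is $R>0$, \emph{independent of $\xi$}, such that for every $\xi\in\La$ the set
$$G_R(\xi):=\{\eta\in\La:\ \text{for each }i,\ \text{the geodesic from }\eta_i\text{ to }\xi_i\text{ in }X_i\text{ meets }\overline{B}(o_i,R)\}$$
has $\nu_u(G_R(\xi))>0$. Granting this, for a measurable $\La'\subset\La$ with $\nu_u(\La')=1$ we may pick $\eta\in\La'\cap G_R(\xi)$; for each $i$ let $g_i\in G_i$ be such that $g_io_i$ is the point of the geodesic from $\eta_i$ to $\xi_i$ nearest $o_i$ and $[g_i]\in G_i/M_i=\T^1X_i$ is the unit tangent vector there pointing toward $\xi_i$. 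Then $g=(g_1,\dots,g_k)$ satisfies $g^+=\xi$, $g^-=\eta\in\La'$, and $d_i(g_io_i,o_i)\le R$ for all $i$ since $\eta\in G_R(\xi)$, so $g\in\mathcal S$, which is compact (each factor is the preimage of the compact set $\overline{B}(o_i,R)\subset X_i=G_i/K_i$ under the proper map $G_i\to X_i$).

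To produce such an $R$ I would combine two uniform facts. First, a geometric one: for $\zeta\in\S^{n_i-1}$ let $W_R(\zeta)$ be $\{\zeta\}$ together with all $\eta_i\neq\zeta$ for which the geodesic from $\eta_i$ to $\zeta$ has distance $>R$ from $o_i$; then $\sup_{\zeta}\op{diam}W_R(\zeta)\to 0$ as $R\to\infty$. This follows from a compactness argument on $\S^{n_i-1}\times\S^{n_i-1}$: the map $(\eta_i,\zeta)\mapsto d_i(o_i,(\eta_i,\zeta))$ is continuous and finite off the diagonal, so if $W_{R_n}(\zeta^{(n)})$ contained points at distance $\ge\varepsilon_0$ from $\zeta^{(n)}$ with $R_n\to\infty$, a convergent subsequence would produce a transverse pair on which this distance is finite yet bigger than every $R_n$ — a contradiction. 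Second, a measure-theoretic one: each coordinate projection $\pi_i:\La\to\La_{\rho_i}$ is injective, since distinct limit points of the Anosov subgroup $\Gamma$ differ in every coordinate (equivalently, the $\rho_i$-boundary map $\zeta_i:\partial\Delta\to\La_{\rho_i}$ is a homeomorphism, $\La$ is the image of the injective map $(\zeta_1,\dots,\zeta_k)$, and $\pi_i\circ(\zeta_1,\dots,\zeta_k)=\zeta_i$). Hence, as $\nu_u$ has no atoms (standard for Patterson--Sullivan measures of Zariski dense groups) and the fibers of $\pi_i$ are single points, the pushforward $(\pi_i)_*\nu_u$ is a Borel probability measure on the \emph{compact} space $\La_{\rho_i}$ with no atoms, and is therefore uniformly non-atomic: for every $\varepsilon>0$ there is $r>0$ with $(\pi_i)_*\nu_u(\overline{B}(\zeta,r))<\varepsilon$ for all $\zeta$.

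Taking $\varepsilon=\tfrac1{2k}$ and then choosing $R$ large enough that $W_R(\zeta)$ is contained in the corresponding ball about $\zeta$ for every $i$ and every $\zeta$, we obtain $(\pi_i)_*\nu_u(W_R(\xi_i))<\tfrac1{2k}$ for all $i$ and all $\xi$. The complement of $G_R(\xi)$ in $\La$ is exactly $\bigcup_{i}\pi_i^{-1}(W_R(\xi_i))$, which then has $\nu_u$-measure $<\tfrac12$, so $\nu_u(G_R(\xi))\ge\tfrac12>0$, uniformly in $\xi$, completing the reduction. The only genuinely non-formal point is this uniformity of $R$ in $\xi$; it is precisely what the two compactness inputs supply — compactness of $\La_{\rho_i}$ for the uniform non-atomicity of $(\pi_i)_*\nu_u$, and compactness of $\S^{n_i-1}$ for the uniform shrinking of $W_R(\zeta)$ — while everything else is elementary hyperbolic geometry.
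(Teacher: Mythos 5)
Your argument is correct, but it takes a genuinely different route from the paper's. The paper's proof is essentially a two-line reduction: it quotes \cite[Lem.~10.6]{LO}, which is exactly the statement for $\La'=\La$, replaces $\cal S$ by its $1$-neighborhood, and observes that, keeping $g^+=\xi$ fixed, every backward endpoint $\eta$ close to $g^-$ is realized by some $h$ in the enlarged compact set with $h^+=\xi$, $h^-=\eta$; since a set of full $\nu_u$-measure is dense in $\La=\op{supp}\nu_u$, the claim follows. You instead prove from scratch a quantitative statement: for a single $R$ independent of $\xi$, the set $G_R(\xi)$ of $\eta\in\La$ all of whose coordinate geodesics $(\eta_i,\xi_i)$ meet $\overline B(o_i,R)$ has $\nu_u(G_R(\xi))\ge 1/2$, which gives the lemma with $\cal S=\prod_i\{h\in G_i:d_i(ho_i,o_i)\le R\}$. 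Your two uniformity inputs are sound: the uniform shrinking of $W_R(\zeta)$ is the standard compactness argument off the diagonal of $\S^{n_i-1}\times\S^{n_i-1}$, and uniform non-atomicity of $(\pi_i)_*\nu_u$ on the compact set $\La_{\rho_i}$ is the usual argument on a compact metric space, granted that $\pi_i|_\La$ is injective and $\nu_u$ has no atoms; the bookkeeping (complement of $G_R(\xi)$ equal to $\bigcup_i\pi_i^{-1}(W_R(\xi_i))$, the choice $\varepsilon=\tfrac1{2k}$) is fine. The only ingredients you import that the paper does not state explicitly are the graph property of $\La$ (hence injectivity of the coordinate projections), which is standard for these self-joinings via the boundary maps, and non-atomicity of $\nu_u$; for the latter, ``standard for Zariski dense groups'' is a bit glib---in rank one, conformal densities may have atoms in general---but here it follows in one line from conicality \eqref{con}, the shadow lemma (Lemma~\ref{sss}), and the fact that $\psi_u(\mu(\ga))\to\infty$ as $\ga\to\infty$, which is forced by Theorem~\ref{p2}(3). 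As for what each approach buys: the paper's proof is shorter but leans on the external compactness lemma of \cite{LO} and on $\op{supp}\nu_u=\La$; yours is self-contained apart from the two standard facts above, replaces full support by non-atomicity, and yields the stronger uniform bound $\nu_u(G_R(\xi))\ge 1/2$ for all $\xi\in\La$.
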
 

\begin{proof} This lemma is proved in
 \cite[Lem. 10.6]{LO} for $\La'=\La$.
 It suffices to replace $\cal S$ by the one-neighborhood of $\cal S$, say, $\cal S_0$. Let $\xi\in \La$, and $g\in \cal S$ be
 such that $g^+=\xi$ and $g^-\in \La$. Then we can find a neighborhood
 $\cal O$ of $g^-$ such that for any $\eta\in \cal O$, there exists $h\in \cal S_0$ such that $ h^+=\xi$ and $ h^-=\eta .$
 Since $\nu_u(\La')=1$, we have $\La'$ is dense in $\La$, and hence
 $\La'\cap O\ne \emptyset$. This implies the claim.
\end{proof}

Note that the proof of this lemma can be extended to general Anosov subgroups as it only uses  \cite[Lem. 10.6]{LO}.

The following shadow lemma is obtained for any $\Ga$-conformal measure of
 any discrete Zariski dense subgroup $\Ga<G$:
\begin{lemma}[Shadow lemma]\cite[Lem. 7.8]{LO} \label{sss}
There exists $R_0>0$ such that for all $R>R_0$, there exists $c=c(\psi_u, R) \ge 1$ such that for any  $\ga\in \G$,
\be\label{sha} c^{-1} \cdot  e^{-\psi_u(\mu(\ga))} \le \nu_u (O_R(o, \ga o))\le c \cdot  e^{-\psi_u(\mu(\ga))}.\ee 
\end{lemma}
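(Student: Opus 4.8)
\emph{Proof plan.} The upper bound is a direct consequence of the conformality of $\nu_u$ together with an elementary Busemann estimate, and only the lower bound will force $R$ to be large. First I would record the estimate: if $\xi\in O_R(\ga^{-1}o,o)$, then by $G$-equivariance of shadows $O_R(\ga^{-1}o,o)=\ga^{-1}O_R(o,\ga o)$, so in each coordinate the geodesic ray from $\ga_i^{-1}o_i$ to $\xi_i$ passes within distance $R$ of $o_i$; hence $\beta_{\xi_i}(\ga_i^{-1}o_i,o_i)$ lies within $2R$ of $d_i(\ga_i^{-1}o_i,o_i)=d_i(o_i,\ga_i o_i)$, which gives $\|\beta_\xi(\ga^{-1}o,o)-\mu(\ga)\|\le 2\sqrt k\,R$ and therefore $|\psi_u(\beta_\xi(\ga^{-1}o,o))-\psi_u(\mu(\ga))|\le 2\sqrt k\,R\,\|\psi_u\|_{\op{op}}$, uniformly in $\ga\in\Ga$ and $\xi\in O_R(\ga^{-1}o,o)$. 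Since $O_R(o,\ga o)=\ga\cdot O_R(\ga^{-1}o,o)$, the conformality relation yields $\nu_u\big(O_R(o,\ga o)\big)=\int_{O_R(\ga^{-1}o,o)}e^{-\psi_u(\beta_\xi(\ga^{-1}o,o))}\,d\nu_u(\xi)$, and combining with the estimate above gives
\[ e^{-2\sqrt k R\|\psi_u\|_{\op{op}}}\,e^{-\psi_u(\mu(\ga))}\,\nu_u\big(O_R(\ga^{-1}o,o)\big)\ \le\ \nu_u\big(O_R(o,\ga o)\big)\ \le\ e^{2\sqrt k R\|\psi_u\|_{\op{op}}}\,e^{-\psi_u(\mu(\ga))}\,\nu_u\big(O_R(\ga^{-1}o,o)\big). \]
As $\nu_u$ is a probability measure, the right-hand inequality already gives the upper bound in \eqref{sha} with $c=e^{2\sqrt k R\|\psi_u\|_{\op{op}}}$.

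It remains to produce, for all large $R$, a constant $\e_0=\e_0(\psi_u,R)>0$ with $\nu_u\big(O_R(\ga^{-1}o,o)\big)\ge\e_0$ for every $\ga\in\Ga$; feeding this into the left-hand inequality above finishes the lower bound. I would prove this uniform lower bound by a compactness argument. Whenever $\|\mu(\ga)\|\le R$ one has $O_R(\ga^{-1}o,o)=\F$, so there is nothing to check there. Otherwise, suppose for contradiction there is a sequence $\ga_\ell$ with $\nu_u\big(O_R(\ga_\ell^{-1}o,o)\big)\to 0$; after passing to a subsequence we may assume either that $\|\mu(\ga_\ell)\|$ stays bounded --- in which case discreteness of $\Ga$ makes $\ga_\ell$ eventually constant, contradicting that each $O_R(\ga^{-1}o,o)$ is a non-empty open set of positive $\nu_u$-measure (it meets $\La=\op{supp}\nu_u$ once $R$ is large) --- or that $\|\mu(\ga_\ell)\|\to\infty$ and hence $\ga_\ell^{-1}o\to\zeta$ for some $\zeta\in\La$. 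In the latter case I would invoke the geometric fact that, for $R$ fixed, $\op{diam}\big(\F\setminus O_R(p,o)\big)\to 0$ as $d(o,p)\to\infty$; then $\F\setminus O_R(\ga_\ell^{-1}o,o)$ is eventually contained in any fixed ball $B(\zeta,r)$, so $\nu_u\big(\F\setminus B(\zeta,r)\big)\le\nu_u\big(O_R(\ga_\ell^{-1}o,o)\big)\to 0$ for every $r>0$, forcing $\nu_u=\delta_\zeta$. This contradicts the non-atomicity of $\nu_u$ --- a standard property of $\Ga$-conformal measures of Zariski dense discrete subgroups; alternatively, $\delta_\zeta$ being $(\Ga,\psi_u)$-conformal would make $\zeta$ a global fixed point of $\Ga$, impossible since $\Ga$ is Zariski dense and non-elementary.

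The genuinely geometric input, and the part that deserves the most care, is the pair of statements that $O_R(\ga^{-1}o,o)$ exhausts $\F$ as $\|\mu(\ga)\|\to 0$ while $\op{diam}\big(\F\setminus O_R(\ga^{-1}o,o)\big)\to 0$ as $\|\mu(\ga)\|\to\infty$ (for $R$ large and fixed). Since $\F=\prod_{i=1}^k\S^{n_i-1}$ and shadows are defined coordinatewise, each reduces to the corresponding statement in $\bH^{n_i}$, where it is routine hyperbolic trigonometry for the visual size, seen from $o_i$, of the set of directions whose geodesic rays from a far-away point avoid a fixed ball about $o_i$; the only Anosov-specific ingredient is that a non-empty shadow $O_R(\ga^{-1}o,o)$ with $R$ large meets the limit set $\La$, which one gets from quasiconvexity of the orbit $\Ga o$ and $\Ga\ba\hull(\La)$ being compact. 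Everything else --- the Busemann estimate, the change of variables via conformality, the discreteness and full-support inputs, and the final compactness step --- is standard. (As this statement is exactly \cite[Lem. 7.8]{LO}, proved there for an arbitrary Zariski dense discrete subgroup and any $\Ga$-conformal measure, one may also simply quote it.)
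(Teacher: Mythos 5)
Your upper bound, and your reduction of the lower bound to a uniform estimate $\inf_{\ga\in\Ga}\nu_u\big(O_R(\ga^{-1}o,o)\big)>0$ for $R$ large, follow the standard route (the paper itself offers no proof here: it quotes \cite[Lem.\ 7.8]{LO}, which is proved for any conformal measure of a Zariski dense discrete subgroup, so simply citing it is fine). The gap is in your compactness argument for that uniform estimate: the geometric claim that, for fixed $R$, $\op{diam}\big(\F\setminus O_R(p,o)\big)\to 0$ as $d(o,p)\to\infty$ is false. Already in $\bH^2$ (upper half-plane, $o=i$, $p=iT$) the ray from $p$ to $\eta\in\R$ misses $B(o,R)$ essentially iff $|\eta|\gtrsim e^{R}$, independently of $T$; so the complement of $O_R(p,o)$ is comparable to a ball of radius $\asymp e^{-R}$ about the forward endpoint of the ray $[o,p)$, uniformly in $d(o,p)$ --- it does not shrink as $p\to\infty$ for fixed $R$. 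In the product $\F=\prod_i\S^{n_i-1}$ it is worse: the complement contains whole cylinders (a coordinate ball of radius $\asymp e^{-R}$ in one factor times the full remaining factors), so its diameter does not tend to $0$ even as $R\to\infty$. Consequently, from $\nu_u\big(O_R(\ga_\ell^{-1}o,o)\big)\to 0$ with $R$ fixed you cannot conclude $\nu_u=\delta_\zeta$; you only learn that sets of this shape may carry nearly full mass, which is no contradiction. Indeed the uniform lower bound genuinely fails for small $R$ (shadows can even miss $\La$), so any correct proof must use the largeness of $R$ --- and your contradiction argument never does, which is the telltale sign of the flaw. (The same issue infects your side remark that each shadow $O_R(\ga^{-1}o,o)$ with $\|\mu(\ga)\|$ bounded has positive measure: that too needs $R$ large.)

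The repair is the classical one, but it changes the structure of the argument. Suppose the uniform bound fails for every $R$; pick $\ga_R$ with $\nu_u\big(O_R(\ga_R^{-1}o,o)\big)<1/R$. The complement of $O_R(\ga_R^{-1}o,o)$ is contained in $\bigcup_{i=1}^k\{\xi\in\F:\,d(\xi_i,\zeta_{R,i})\le Ce^{-R}\}$, where $\zeta_R\in\F$ is determined by the direction of $\ga_R^{-1}o$ from $o$. Passing to a subsequence $\zeta_R\to\zeta$ and letting $R\to\infty$, you conclude $\nu_u\big(\bigcup_i\{\xi:\xi_i=\zeta_i\}\big)=1$, hence $\nu_u\big(\{\xi:\xi_i=\zeta_i\}\big)>0$ for some $i$. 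The extra input you then need --- and which non-atomicity alone does not give --- is that a $(\Ga,\psi_u)$-conformal measure of a Zariski dense subgroup assigns zero mass to every such degenerate set $\{\xi:\xi_i=\zeta_i\}$; this is exactly where Zariski density enters in Quint's and Lee--Oh's proofs. This contradiction produces one $R_0$ with $\inf_\ga\nu_u\big(O_{R_0}(\ga^{-1}o,o)\big)>0$, and monotonicity of shadows in $R$ then gives the statement for all $R>R_0$, matching the quantifiers in the lemma. Your Busemann estimate, the change of variables by conformality, and the resulting upper bound are all correct as written.
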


For $r>0$, let $B(\xi_i, r)$ denote the ball in 
$\S^{n_i - 1}$
centered at $\xi_i$ of radius $r$.
The following theorem is one of two key ingredients of our proof for the  lower bound of $\dim \La_u$ (Corollary \ref{upp}):
\begin{Thm}\label{p4} Let $k\ge 1$. There exists $C_1, C_2>0$ such that for any $\xi=(\xi_1, \cdots, \xi_k)\in \La_u^*$,
 and for any sufficiently small $\e>0$, there exists $t_0=t_{\e,\xi}>0$ such that for all $t\ge t_0$,
\be\label{conetwo}  C_1 \cdot e^{-\delta_u (1+\e)t  } \le  \nu_u\left( \prod_{i=1}^k B(\xi_i, e^{-u_i t}) \right)  \le C_2 \cdot e^{-\delta_u (1-\e)t  } .\ee 
\end{Thm}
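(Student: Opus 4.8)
The plan is to estimate the $\nu_u$-measure of the box $\prod_{i=1}^k B(\xi_i, e^{-u_it})$ by comparing it with the shadow $O_N(o, \ga o)$ of a well-chosen group element, and then to invoke the shadow lemma (Lemma \ref{sss}) together with the sublinear decay of the $\ker\psi_u$-coordinate map at points of $\La_u^*$ (Corollary \ref{ae}, encoded in the definition \eqref{newdef}). Concretely, fix $\xi\in\La_u^*$. Using Lemma \ref{com}, choose $g$ in the fixed compact set $\cal S$ with $g^+=\xi$ and $g^-\in\La(\xi)$; then by the definition of $\La_u^*$ we have $\tfrac1t K^\dagger_u(g,t)\to 0$. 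For each $t>0$, the element $\ga_{g,t}\in\Ga$ from Lemma \ref{lem.jan19} satisfies $\ga_{g,t}\,g\,a_{tu}\in D\,a_{K^\dagger_u(g,t)}$ by \eqref{gaa}. Since $D$ is compact and $\|K^\dagger_u(g,t)\|=o(t)$, the point $\ga_{g,t}^{-1}o$ lies within bounded distance of $g\,a_{tu}\,(\text{something of size }o(t))\,o$; in particular its $i$-th Cartan component is $u_it + o(t)$, and the direction from $o$ is $e_i\mapsto g_i^+=\xi_i$ up to an error going to $0$.

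First I would make precise the ``box vs.\ shadow'' comparison. The shadow $O_N(o,\ga_{g,t}o)$ is comparable (up to the constant $c_N$ as in Section \ref{LaHdim}, \ref{loc}) to the box $\prod_i B(\xi_{i}, e^{-\mu_i(\ga_{g,t})})$ where $\mu_i(\ga_{g,t}) = u_i t + o(t)$; so for any $\e>0$ and $t$ large, $O_N(o,\ga_{g,t}o)$ is sandwiched between $\prod_i B(\xi_i, e^{-u_it(1+\e)})$ and $\prod_i B(\xi_i, e^{-u_it(1-\e)})$ (after also absorbing the bounded error moving $\xi_{i}$ back to $\xi_i$ and the fixed constant $c_N$; one should rescale $t$ slightly or enlarge $N$ to swallow these, exactly as in Lemma \ref{LUC}). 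This gives
\[
\nu_u\!\left(O_N(o,\ga_{g,t(1+\e)}o)\right)\ \le\ \nu_u\!\left(\textstyle\prod_i B(\xi_i,e^{-u_it})\right)\ \le\ \nu_u\!\left(O_N(o,\ga_{g,t(1-\e)}o)\right)
\]
for $t$ large, after reindexing $t$. Then the shadow lemma with $R=N>R_0$ gives $\nu_u(O_N(o,\ga_{g,s}o))\asymp e^{-\psi_u(\mu(\ga_{g,s}))}$, and $\psi_u(\mu(\ga_{g,s})) = \psi_u(su + o(s)) = \delta_u s + o(s)$ because $\psi_u(u)=\delta_u$ and $\psi_u$ is a bounded linear form; this yields $e^{-\delta_u s(1+\e)}\lesssim \nu_u(O_N(o,\ga_{g,s}o))\lesssim e^{-\delta_u s(1-\e)}$ for $s$ large, which after combining with the sandwich gives \eqref{conetwo} (adjusting $\e$ by a bounded factor, and choosing $t_0=t_{\e,\xi}$ large enough that all the ``$o(t)$'' error terms are $\le \e t$). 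The uniformity of $C_1, C_2$ in $\xi$ comes from the fact that $g$ ranges over the \emph{fixed} compact set $\cal S$ and $D$ is fixed compact, so all the implied constants ($c_N$, the shadow-lemma constant $c(\psi_u,N)$, $\mathrm{diam}\,D$) are independent of $\xi$; only $t_0$ depends on $\xi$, through the rate of convergence $K^\dagger_u(g,t)/t\to 0$.

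The main obstacle I anticipate is controlling the \emph{direction} error, not the Cartan-size error: moving $\ga_{g,t}^{-1}o$ by the element $a_{K^\dagger_u(g,t)}$ (whose norm is $o(t)$ but \emph{not} bounded) could a priori rotate the forward endpoint away from $\xi_i$ by a non-negligible spherical distance if the displacement is transverse, yet here $a_{K^\dagger_u(g,t)}$ lies in $A$, which acts on each $\bH^{n_i}$ by translation along a fixed geodesic through $o_i$; since $g$ is fixed, the relevant geodesic endpoint stays exactly $\xi_i$, and the only effect of the extra $a_{K^\dagger_u(g,t)}$ and of $D$ is to shift the Cartan component by $O(\|K^\dagger_u(g,t)\|)+O(1) = o(t)$ and to move the basepoint within $\mathrm{diam}\,D$. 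So the geometric input is really: the shadow of a point at Cartan distance $\mu_i$ in the ``direction of $g_i^+$'' sits inside a spherical ball of radius $\asymp e^{-\mu_i}$ around $g_i^+$ and contains one of radius $\asymp e^{-\mu_i}$, with the comparison constants depending only on the compact sets involved — a standard rank-one shadow estimate applied coordinatewise. Once this is pinned down, the two inequalities in \eqref{conetwo} follow by feeding the sandwich into the shadow lemma and the linearity of $\psi_u$ as above.
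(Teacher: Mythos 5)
Your proposal is correct and follows essentially the same route as the paper's proof: pick $g\in\cal S$ with $g^+=\xi$, $g^-\in\La(\xi)$, use \eqref{gaa} to write $\ga_{g,t}^{-1}=ga_{tu-K^{\dagger}_u(g,t)}d_t^{-1}$, sandwich the product of balls between shadows of $\ga_{g,t}^{-1}o$ (at slightly rescaled times), and apply the shadow lemma together with the sublinearity of $K^{\dagger}_u(g,t)$, with all constants uniform because $g$ stays in $\cal S$ and $D$ is fixed. The only cosmetic differences are a notational slip (the shadows in your displayed sandwich should be based at $\ga_{g,t}^{-1}o$, as in your earlier discussion, not $\ga_{g,t}o$) and that the paper additionally exploits $K^{\dagger}_u(g,t)\in\ker\psi_u$ to get $\psi_u(\mu(\ga_{g,t}^{-1}))=\delta_u t+O(1)$ rather than your $\delta_u t+o(t)$, which changes nothing in the $(1\pm\e)$ conclusion.
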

\begin{proof}
Choose $g\in \cal S$ such that $g^+=\xi$ and $g^-\in \La(\xi)$ where $\La(\xi)$ is given in \eqref{newdef} and $\cal S$ is a compact subset of $G$ given in Lemma \ref{com}.
Let $\e>0$.
By the definition \eqref{newdef} of $\La_u^*$,
there exists $t_0=t_{\e,g} >0$ such that for each $1\le i\le k$,
the absolute value of the $i$-th component of $K^{\dagger}(g, t)\in \fa=\br^k$ is
$$\mbox{at most} \quad \frac{\e u_i t}{4}\quad\text{for all } t > t_0.$$

Recall the definition of $\ga_{g,t}$ from \eqref{gaa}:
$\gamma_{g,t}g a_{tu}= d_t a_{K^\dagger (g, t)} $ where $d_t\in D$.
Therefore $\ga_{g,t}^{-1}=g a_{tu - K^{\dagger}(g,t)} d_t^{-1}$. Let $q$ be the diameter of $D^{-1}o$.

Note that there exists $c_0>0$ such that  for all $t>t_0$, 
$$  O_{1}(o, \ga_{g,t}^{-1}o )\subset
O_{q+1}(o, ga_{tu - K^{\dagger}(g,t)}o)\subset  \prod_{i=1}^k B(\xi_i, c_0 e^{-u_i(1-  \e/4) t}); $$
the first inclusion is immediate from the definition of the shadows.
Hence we deduce from Lemma \ref{sss} that for all $t>\max(t_0, 2\log c_0/(u_i\e))$,
\begin{multline*}
\beta  \cdot e^{-\delta_u  t} \le  \nu_u\left( \prod_{i=1}^k B(\xi_i, 
c_0 e^{-u_i(1-\e/4) t}) \right) \le  \nu_u\left( \prod_{i=1}^k B(\xi_i, 
e^{-u_i(1-\e/2) t}) \right)
\end{multline*} 
for some constant $\beta=\beta(\psi_u, D)>0$. By reparametrizing $(1-\e/2)t=s$,
this implies the lower bound in \eqref{conetwo}.

On the other hand, for all $t \ge t_0$,
$$\begin{aligned}
\prod_{i = 1}^k B(\xi_i, e^{-u_i(1+\varepsilon/4)t}) & \subset  
 O_{p}(o, ga_{ (1+\e/4) tu }o) \\
 & \subset O_p(o, ga_{tu-K^{\dagger}(g, t)}o) \subset O_{p+q}(o, \ga_{g, t}^{-1}o)
 \end{aligned}$$ where $p$ depends only on $\cal S$. Hence, by \eqref{sha},
 $$\nu_u \left( \prod_{i = 1}^{k} B(\xi_i, e^{-u_i (1 + \varepsilon/4)t}) \right) \le c e^{-\psi_u(\mu(\ga_{g, t}^{-1}))}$$ where $c=c(\psi_u, p+q)$. Recalling that $\ga_{g, t}^{-1} = ga_{tu - K^{\dagger}(g, t)}d_t^{-1}$, 
 we have $$\|\mu(\ga_{g,t}^{-1}) -(tu-K^{\dagger}(g,t))\| \le \|\mu(g) \| +\|\mu(d_t)\| \le \beta'$$
 where $\beta'=2 \max\{ \|\mu(h)\|: h\in \cal S\cup  D\}$.
 
Since $K^{\dagger}(g,t)\in \ker \psi_u$, we have for all $t>t_0,$
  $|\psi_u(\mu(\ga_{g,t}^{-1})) -t\delta_ u |\le \|\psi_u\|_{\op{op}} \beta'$.
 Therefore we have $$\nu_u \left( \prod_{i = 1}^{k} B(\xi_i, e^{-u_i (1 + \varepsilon/4)t}) \right) \le C_2 e^{-\delta_u t}$$ where $C_2>0$ depends only on $\cal S$, $D$ and $\psi_u$. 
 In other words, for all $t>2t_0$,
  $$\nu_u \left( \prod_{i = 1}^{k} B(\xi_i, e^{-u_i t}) \right) \le C_2 e^{-(1-\e) \delta_u t}.$$ This proves the upper bound in \eqref{conetwo}.
\end{proof}

\subsection*{Lower bound for dimension}
The second key ingredient of the proof of Corollary \ref{upp} is
the following recent result:
\begin{Thm} \cite[Thm. 1.6]{BLLO} \label{dic} 
We have
\begin{equation*}
\nu_u(\La_u)=\begin{cases} 1&\text{ if $k\le 3$}\\ 0 &\text{otherwise.}
 \end{cases} \end{equation*} \end{Thm}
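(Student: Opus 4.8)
The plan is to recast the statement as a recurrence/transience dichotomy for the one-parameter flow $\{a_{tu}\}$ acting on the fibered space $\Omega$ built in Section~\ref{fibdyn}, and to resolve it by showing that the $\ker\psi_u$-valued cocycle $\hat K_u$ behaves, via the central limit theorem, like Brownian motion on $\ker\psi_u\cong\br^{k-1}$ -- so that the hypothesis $k\le 3$ is precisely the Chung--Fuchs/P\'olya recurrence threshold $k-1\le 2$. First I would record the dictionary between $\La_u$ and recurrence: if $g\in G$ has $g^+=\xi$ then $ga_{tu}o=\xi(tu)$, and by \eqref{gaa} we have $\ga_{g,t}ga_{tu}\in D\,a_{K^{\dagger}_u(g,t)}$ with $D$ compact; hence $\Ga ga_{tu}$ returns to a fixed compact subset of $\Ga\ba G$ for arbitrarily large $t$ exactly when $\liminf_{t\to\infty}\lVert K^{\dagger}_u(g,t)\rVert<\infty$, and by the definition of $\La_u$ this happens iff $\xi\in\La_u$. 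Using Lemma~\ref{com} to pick, for $\nu_u$-a.e.\ $\xi$, a basepoint $g\in\cal S$ with $g^+=\xi$ and $g^-$ in a prescribed conull set, together with Fubini against $\m^{\BMS}_u=\mathsf m_u\otimes\op{Leb}|_{\ker\psi_u}\otimes dm$, the value of $\nu_u(\La_u)$ is then governed by whether $\m^{\BMS}_u$-a.e.\ point of $\Omega$ is forward recurrent under $\{a_{tu}\}$.

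The analytic core is the asymptotics of $\hat K_u(z,t)$, for which I would use the decomposition already at hand in \eqref{hatk}: $\hat K_u(z,t)=\int_0^t F(z\tau_s)\,ds+E(z)-E(z\tau_t)$ with $F : Z\to\ker\psi_u$ H\"older, $\int_Z F\,d\mathsf m_u=0$, and $E$ bounded. Since $\tau_t$ is a topologically mixing reparametrized geodesic flow on the compact space $Z$ and $\mathsf m_u$ is its measure of maximal entropy, the central limit theorem and the matching local limit theorem for the H\"older additive functional $\int_0^t F(z\tau_s)\,ds$ -- and hence, up to the bounded correction, for $\hat K_u(z,t)$ -- apply, giving $t^{-1/2}\hat K_u(z,t)\Rightarrow\mathcal N(0,\Sigma)$ on $\ker\psi_u$ and $\mathsf m_u(\{z:\lVert\hat K_u(z,t)\rVert\le R\})\asymp R^{\,k-1}t^{-(k-1)/2}$ for large $t$. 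Here I must check that $\Sigma$ is nondegenerate on $\ker\psi_u$: in the splitting $\fa=\br u\oplus\ker\psi_u$, the periods of $F$ around closed $\tau_t$-orbits are the $\ker\psi_u$-components of the Jordan projections $\lambda(\ga)$, $\ga\in\Ga$; since the limit cone $\L$ has nonempty interior by Benoist (equivalently, $\Ga$ is Zariski dense, i.e.\ no two $\rho_i$ are conjugate), the $\lambda(\ga)$ span $\fa$, so their $\ker\psi_u$-components span $\ker\psi_u$ and $\Sigma$ is positive definite.

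Given this, the two cases separate cleanly. If $k\ge 4$ then $(k-1)/2>1$, so $\int_1^\infty\mathsf m_u(\{\lVert\hat K_u(\cdot,t)\rVert\le R\})\,dt<\infty$ for every $R$; a first-moment argument (using that $\hat K_u(z,\cdot)$ moves at bounded speed up to bounded error, so returns to a ball accumulate positive time) then forces $\lVert\hat K_u(z,t)\rVert\to\infty$ for $\mathsf m_u$-a.e.\ $z$, hence $\m^{\BMS}_u$-a.e.\ point of $\Omega$ is nonrecurrent, and $\nu_u(\La_u)=0$. If $k\le 3$ then $\int_1^\infty\mathsf m_u(\{\lVert\hat K_u(\cdot,t)\rVert\le R\})\,dt=\infty$ for every $R>0$: the recurrence regime. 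Here I would combine a second-moment (variance) estimate on the occupation times $\int_0^T\mathbf 1\{\lVert\hat K_u(z,t)\rVert\le R\}\,dt$, upgrading the divergent first moment to almost-sure infiniteness, with the Hopf decomposition for the infinite-measure-preserving system $(\Omega,\m^{\BMS}_u,a_{tu})$; this yields conservativity -- i.e.\ $\m^{\BMS}_u$-a.e.\ point is forward recurrent -- and hence $\nu_u(\La_u)=1$. An alternative packaging routes the same local-limit input through a Hopf--Tsuji--Sullivan-type dichotomy, in which convergence of a Poincar\'e series carrying an extra polynomial factor $\lVert\mu(\ga)\rVert^{-(k-1)/2}$ plays the role of transience.

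I expect the main obstacle to be the borderline case $k=3$, the recurrence of the planar ($\br^2$) cocycle: the first moment of the occupation time diverges only logarithmically, so it does not by itself imply conservativity, and one needs two-sided local-limit estimates uniform in $z$ together with genuine second-moment control (or an exhaustion argument inside the Hopf decomposition). A secondary technical point is the descent from ``$\m^{\BMS}_u$-a.e.\ orbit is (non)recurrent'' to the stated value of $\nu_u(\La_u)$: one must verify that, for fixed $\xi$, forward recurrence of $[(\xi,\eta,v)]$ is independent of $(\eta,v)$ up to $\nu_u\otimes\op{Leb}$-null sets -- a tail-event/zero--one statement, where $\{a_{tu}\}$-ergodicity of $\m^{\BMS}_u$ (itself true precisely for $k\le 3$) enters -- so that the conclusion for $\m^{\BMS}_u$-a.e.\ pair passes to $\nu_u$-a.e.\ $\xi$.
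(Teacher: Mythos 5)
You should note at the outset that the paper does not prove Theorem~\ref{dic}: it is imported verbatim from \cite{BLLO}, so the only meaningful comparison is with the proof given there. Your sketch does identify the mechanism that actually drives that proof. The dictionary between $\xi\in\La_u$ and boundedness along a subsequence of the $\ker\psi_u\cong\br^{k-1}$-valued coordinate is legitimate, since $\Omega\cong Z\times\ker\psi_u$ is closed in $\Ga\ba G/M$ with $Z$ compact, so compact subsets of $\Ga\ba G/M$ meet $\Omega$ in sets with bounded fiber coordinate; and the identification of $k\le 3$ with the recurrence threshold for a centered, nondegenerate $(k-1)$-dimensional cocycle is exactly the right heuristic. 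Moreover, your parenthetical ``alternative packaging'' is in fact the route of \cite{BLLO}: a higher rank Hopf--Tsuji--Sullivan dichotomy reduces $\nu_u(\La_u)\in\{0,1\}$, together with conservativity versus total dissipativity of $(\Omega,\m^{\BMS}_u,\{a_{tu}\})$, to divergence or convergence of a directional Poincar\'e series, whose behavior is then read off from local limit/windowed counting asymptotics of order $e^{\delta_u t}\,t^{-(k-1)/2}$.

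As a proof, however, the central step is missing. For $k\le 3$ you must establish conservativity, and divergence of the expected occupation time $\int_1^\infty \mathsf m_u\bigl(\{\lVert\hat K_u(\cdot,t)\rVert\le R\}\bigr)\,dt$ never by itself yields almost sure recurrence; in the critical case $k=3$ the divergence is only logarithmic, and the second-moment (or Hopf-decomposition exhaustion) argument you defer is precisely the hard content of the theorem, not a technicality. Similarly, Chung--Fuchs is a statement about i.i.d.\ increments: to run it for $\int_0^t F(z\tau_s)\,ds$ over the flow $(Z,\mathsf m_u,\tau_t)$ you need a genuine multidimensional local limit theorem for this H\"older functional, with non-arithmeticity in the $\ker\psi_u$-direction and uniformity sufficient for anticoncentration estimates; this is a substantial input to be proved or cited, not a formal consequence of \eqref{hatk} (only the nondegeneracy of the covariance via Zariski density is routine). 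Finally, your descent from ``$\m^{\BMS}_u$-a.e.\ recurrent'' to $\nu_u(\La_u)=1$ invokes $\{a_{tu}\}$-ergodicity of $\m^{\BMS}_u$, which is itself part of the \cite{BLLO} theorem being proved, so as written this step is circular. It is also unnecessary: conservativity plus Halmos' recurrence theorem and Fubini for $\m^{\BMS}_u$, which is locally $\nu_u\otimes\nu_u\otimes\op{Leb}$, already give $\nu_u(\La_u)=1$; and for $k\ge 4$ your first-moment transience argument (made uniform via the local limit upper bound and the bounded-speed-up-to-bounded-error form of \eqref{hatk}, with Lemma~\ref{com} supplying basepoints) does give total dissipativity and hence $\nu_u(\La_u)=0$. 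So the architecture is sound and matches the known proof, but the recurrence half at $k=3$ and the local limit theorem it rests on are genuine gaps.
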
 

This together with Corollary \ref{ae} implies:
\begin{cor}\label{ust}
 If $k\le 3$, then $\nu_u(\La_u^*\cap \La_u)=1$.
\end{cor}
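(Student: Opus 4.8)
The plan is to deduce Corollary~\ref{ust} directly from the two preceding full-measure statements by finite subadditivity of $\nu_u$. First I would recall that $\nu_u$ is a Borel probability measure concentrated on $\La$ (Theorem~\ref{p2}(2)) and that both $\La_u^*$ and $\La_u$ are Borel subsets of $\La$, so it is enough to show that $\La \setminus (\La_u^* \cap \La_u)$ is $\nu_u$-null. Since
$$\La \setminus (\La_u^* \cap \La_u) = (\La \setminus \La_u^*) \cup (\La \setminus \La_u),$$
subadditivity gives
$$\nu_u\bigl(\La \setminus (\La_u^* \cap \La_u)\bigr) \le \nu_u(\La \setminus \La_u^*) + \nu_u(\La \setminus \La_u).$$

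The first term on the right vanishes because $\nu_u(\La_u^*) = 1$ by Corollary~\ref{ae} (which is where the sublinear decay of the $\ker\psi_u$-coordinate map $K^\dagger_u$, and ultimately Theorem~\ref{zo}, enters). The second term vanishes because the hypothesis $k \le 3$ forces $\nu_u(\La_u) = 1$ by Theorem~\ref{dic}. Combining these, $\nu_u(\La_u^* \cap \La_u) = 1$, as claimed.

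There is no genuine obstacle in this deduction: all of the substance is carried by the cited results, and the restriction $k \le 3$ is funneled entirely through Theorem~\ref{dic} — exactly as the introduction anticipates, since this is the "exact reason for the hypothesis $k \le 3$" in Theorem~\ref{main2}. The only point requiring a moment's care is the bookkeeping that $\La_u^*$ and $\La_u$ are both honest subsets of the support of $\nu_u$, so that two "$\nu_u$-almost all" statements can be intersected without loss of mass.
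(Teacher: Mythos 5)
Your argument is exactly the paper's: Corollary \ref{ae} gives $\nu_u(\La_u^*)=1$, Theorem \ref{dic} gives $\nu_u(\La_u)=1$ for $k\le 3$, and the intersection of two conull sets is conull. The paper states this deduction in one line, and your more careful bookkeeping via subadditivity is just an expanded version of the same reasoning.
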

We are now ready to prove the following lower bound on $\dim \La_u$:
\begin{cor}\label{upp} For $k\le 3$,  we have
 $$ \dim \La_u\ge  \dim{} (\La^*_u\cap \La_u) \ge \frac{\delta_u}{M_u}$$
 where $\delta_u$ and $M_u$ are given in \eqref{MU}.
\end{cor}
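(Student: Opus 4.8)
The plan is to prove the lower bound $\dim(\La_u^* \cap \La_u) \ge \delta_u/M_u$ via the mass distribution principle applied to the measure $\nu_u$. By Corollary \ref{ust}, since $k \le 3$, we have $\nu_u(\La_u^* \cap \La_u) = 1$, so $\nu_u$ is a probability measure supported (up to null sets) on the set $\La_u^* \cap \La_u$ whose dimension we wish to bound below. The mass distribution principle states that if there is a positive measure $\nu$ on a set $S$ such that $\nu(B(\xi, r)) \le c\, r^{s}$ for all balls of small radius and all $\xi$ in a subset of full $\nu$-measure, then $\dim S \ge s$. Thus it suffices to upgrade the upper bound in Theorem \ref{p4} into a genuine ball estimate in the product metric on $\F = \prod_i \S^{n_i - 1}$.

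First I would recall that a ball $B(\xi, r)$ in $\F$ of radius $r$ is comparable to the product $\prod_{i=1}^k B(\xi_i, r)$, since the metric on $\F$ is the Riemannian product of the spherical metrics; more precisely $\prod_i B(\xi_i, r/\sqrt k) \subset B(\xi, r) \subset \prod_i B(\xi_i, r)$. So it is enough to estimate $\nu_u(\prod_i B(\xi_i, r))$ from above. Now for $\xi \in \La_u^*$ and small $\e > 0$, Theorem \ref{p4} gives, for all $t \ge t_0 = t_{\e,\xi}$,
$$
\nu_u\Bigl( \prod_{i=1}^k B(\xi_i, e^{-u_i t}) \Bigr) \le C_2\, e^{-\delta_u(1-\e) t}.
$$
Given a small radius $r$, I would choose $t$ so that $e^{-M_u t} = r$, i.e. $t = -\log r / M_u$; then $e^{-u_i t} \ge e^{-M_u t} = r$ for every $i$ since $u_i \le M_u$, hence $\prod_i B(\xi_i, r) \subset \prod_i B(\xi_i, e^{-u_i t})$. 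Combining,
$$
\nu_u\Bigl( \prod_{i=1}^k B(\xi_i, r) \Bigr) \le C_2\, e^{-\delta_u(1-\e) t} = C_2\, r^{\,\delta_u(1-\e)/M_u},
$$
valid for all $r \le r_0(\e,\xi)$. Therefore the local dimension of $\nu_u$ at every point of $\La_u^*$ is at least $\delta_u(1-\e)/M_u$.

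To conclude, I would apply the mass distribution principle (e.g. \cite{BP}): for each fixed $\e$, the set $\{\xi : \nu_u(B(\xi,r)) \le C\, r^{\delta_u(1-\e)/M_u} \text{ for all small } r\}$ has full $\nu_u$-measure — one has to be slightly careful because the threshold $r_0$ depends on $\xi$, but a standard exhaustion argument handles this: write $\La_u^* = \bigcup_{m} \{\xi : r_0(\e,\xi) \ge 1/m\}$, note $\nu_u$ of this union is $1$ so some piece $E_m$ has positive measure, and the restriction $\nu_u|_{E_m}$ witnesses $\dim E_m \ge \delta_u(1-\e)/M_u$, whence $\dim(\La_u^* \cap \La_u) \ge \dim E_m \ge \delta_u(1-\e)/M_u$ since $\nu_u(E_m \setminus \La_u) = 0$. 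Letting $\e \to 0$ gives $\dim(\La_u^* \cap \La_u) \ge \delta_u/M_u$, and since $\La_u^* \cap \La_u \subset \La_u$, also $\dim \La_u \ge \delta_u/M_u$. The main obstacle is the non-uniformity of $t_0 = t_{\e,\xi}$ in Theorem \ref{p4}: the estimate holds only eventually in $t$, with threshold depending on $\xi$, so the mass distribution principle cannot be applied verbatim and requires the measure-restriction/exhaustion step above to pass from pointwise local estimates to a global dimension bound. Everything else is a routine translation between product balls and metric balls on $\F$ together with the already-established inputs (Theorem \ref{p4}, Corollary \ref{ust}).
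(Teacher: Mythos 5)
Your proposal is correct and follows essentially the same route as the paper: convert the upper bound of Theorem \ref{p4} into a local estimate $\nu_u(B(\xi,r))\le C\, r^{(1-\e)\delta_u/M_u}$ for $\xi\in\La_u^*$ via the inclusion of metric balls in product balls, use $\nu_u(\La_u^*\cap\La_u)=1$ from Corollary \ref{ust}, apply the mass distribution principle, and let $\e\to 0$. The only cosmetic difference is that the paper handles the $\xi$-dependent threshold by citing the Rogers--Taylor theorem \cite[Theorem 4.3.3]{BP}, whereas you carry out the equivalent exhaustion/restriction argument by hand.
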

\begin{proof}
Recall that $B(\xi, r)$ denotes the ball of radius $r > 0$ centered at $\xi = (\xi_1, \cdots, \xi_k)$ in $\F = \prod_{i = 1}^k \S^{n_i - 1}$ with respect to the Riemannian metric. Since $u_i/M_u \le 1$,  we have that for all $t>0$
$$B(\xi, e^{-t}) \subset \prod_{i = 1}^k B(\xi_i, e^{-u_i t / M_u}).$$

Fix $\e>0$.
Therefore Theorem \ref{p4} implies that
there exists $C>0$, independent of $\e>0$,  such that for any
$\xi\in \La_u^*$ and  for all sufficiently small $r=r_{\e, \xi}>0$,
 $$\nu_u (B (\xi, r)) \le C \cdot r^{(1-\e) \delta_u/M_u } .$$ 
Since $\nu_u (\La_u^* \cap \La_u) = 1$ by Corollary \ref{ust},
the Mass distribution property (more precisely,
 Rogers-Taylor theorem  \cite[Theorem 4.3.3]{BP}) now implies that 
 $$\dim (\La_u^*\cap \La_u)  \ge (1-\e) \delta_u/M_u  .$$
  Since $\e>0$ is arbitrary, this proves the claim. 
 \end{proof}

\begin{Rmk} \label{rone} Our proofs of Theorems \ref{main} and \ref{main2} work in the same way for the product 
 $G=\prod_{i=1}^k G_i$ where $G_i=\op{Isom}^\circ (X_i)$ is a simple Lie group for a 
 Riemannian symmetric space $X_i$ of rank one. The Furstenberg
 boundary $\cal F$ of $G$ is the product $\prod_{i=1}^k
 \partial X_i$ of geometric boundaries of $X_i$, and the Hausdorff dimension of the limit set
 of $\Ga<G$ is to be computed with respect to a certain sub-Riemannian metric on $\F$ which is invariant under a maximal compact subgroup of $G$, as described in \cite{Co}. In these situations, shadows are comparable to metric balls by \cite[Thm. 2.2]{Co} and $\dim \La_{\rho_i}=\delta_{\rho_i}$ by
 \cite[Thm 6.1]{CI}. Given these, the discussions in sections \ref{prelim}--\ref{loc} remain valid.
 \end{Rmk}
 
\section{Examples of symmetric growth indicator functions} \label{sec.sym}
 Given a self-joining subgroup
 $\G<G$, there doesn't seem to be any general method
 to compute the maximal growth
 direction.  In this  section,
 we provide a class of geometric examples of $\Ga$ whose
 growth indicator functions are symmetric, and hence whose
  maximal growth direction $u_\Ga$ is parallel to $(1, \cdots, 1)$.

Let $\Delta$ be a finitely generated group, and
$\Out \Delta$ denote its outer automorphism group,
i.e., the group of automorphisms of $\Delta$ modulo the inner automorphisms.
Note that, for a representation $\rho:\Delta\to \so$ and $\iota\in\Out\Delta$, $\rho\circ \iota $ is well-defined up to conjugation in $\so$.
\begin{lem}\label{extra}  Let $k\ge 2$.
Let $\rho_1:\Delta\to \so$ be a non-elementary convex cocompact faithful representation and $\iota \in \Out \Delta$ be of order $k$. Let $\rho_i = \rho_1 \circ \iota^{i-1}$ for $2 \le i \le k$ and let $\Ga_{\iota} := (\prod_{i=1}^k
 \rho_i)(\Delta)$.  Then 
 $$\psi_{\Ga_\iota}=\psi_{\Ga_\iota}\circ \theta
\;\;  \text{ and } \;\; u_{\Ga_\iota}=\tfrac{1}{\sqrt k}(1, \cdots, 1)$$
 where $\theta$ denotes the cyclic permutation $(x_1, \cdots, x_k) \mapsto (x_2, \cdots, x_k, x_1)$.
 \end{lem}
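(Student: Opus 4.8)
The plan is to show that the cyclic permutation $\theta$ of the coordinates of $\fa = \br^k$ arises from an automorphism of the pair $(G, \Ga_\iota)$, so that $\psi_{\Ga_\iota}$, which is canonically attached to $\Ga_\iota$, is forced to be $\theta$-invariant; the statement about $u_{\Ga_\iota}$ then follows from the concavity argument already used in the introduction.

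First I would set up the relevant automorphism of $G = \prod_{i=1}^k \so$. Let $\Phi : G \to G$ be the map that permutes the factors cyclically, $\Phi(g_1, \dots, g_k) = (g_2, \dots, g_k, g_1)$. This is a Lie group automorphism of $G$ that sends the maximal compact $K$ to $K$, the Cartan subgroup $A$ to $A$, and its effect on $\fa$ (identifying $\fa$ with the Lie algebra of $A$ via the fixed parametrization) is exactly the cyclic permutation $\theta$; in particular $\mu(\Phi(g)) = \theta(\mu(g))$ for all $g \in G$ and $d(\Phi(g)o, o) = d(go, o)$ since $\theta$ preserves the Euclidean norm on $\fa$. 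Next I would verify that $\Phi(\Ga_\iota)$ is conjugate to $\Ga_\iota$ in $G$. Indeed, writing an element of $\Ga_\iota$ as $(\rho_1(\sigma), \rho_1(\iota \sigma), \dots, \rho_1(\iota^{k-1}\sigma))$ for $\sigma \in \Delta$, applying $\Phi$ gives $(\rho_1(\iota\sigma), \rho_1(\iota^2\sigma), \dots, \rho_1(\iota^{k-1}\sigma), \rho_1(\sigma))$, and since $\iota^k$ is inner, say $\iota^k = \mathrm{conj}_{\sigma_0}$ for some $\sigma_0 \in \Delta$, the last coordinate $\rho_1(\sigma)$ equals $\rho_1(\iota^k(\sigma_0^{-1}\sigma\sigma_0))$... more cleanly: reparametrize by $\sigma' = \iota(\sigma)$, so the image element is $(\rho_1(\sigma'), \rho_1(\iota\sigma'), \dots, \rho_1(\iota^{k-1}\sigma'), \rho_1(\iota^{-1}\sigma'))$ and $\rho_1 \circ \iota^{-1} = \rho_1 \circ \iota^{k-1} \circ \iota^{-k}$ is conjugate (in $\so$, by faithfulness and $\iota^k$ inner) to $\rho_1 \circ \iota^{k-1} = \rho_k$. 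Thus $\Phi(\Ga_\iota) = h \Ga_\iota h^{-1}$ for some $h \in G$.

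Now I would invoke naturality of the growth indicator function: $\psi_\Ga$ depends only on the triple $(G, \Ga, \text{choice of } K, A)$ up to the obvious equivariances, and in particular $\psi_{\Phi(\Ga)} = \psi_\Ga \circ \Phi|_\fa^{-1} = \psi_\Ga \circ \theta^{-1}$ (since $\Phi$ is a norm-preserving automorphism carrying $A^+$ to $A^+$ and acting by $\theta$ on $\fa$), while conjugation-invariance gives $\psi_{h\Ga h^{-1}} = \psi_\Ga$. Combining, $\psi_{\Ga_\iota} = \psi_{\Phi(\Ga_\iota)} = \psi_{\Ga_\iota} \circ \theta^{-1}$, i.e. $\psi_{\Ga_\iota} = \psi_{\Ga_\iota} \circ \theta$. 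Since $\theta$ has order $k$ and generates together with nothing else, this already gives $\theta$-invariance; one should note the lemma only claims invariance under $\theta$, not under the full symmetric group, so no further symmetrization is needed. Finally, for the maximal growth direction: $u_{\Ga_\iota}$ is the unique unit vector achieving $\sup_{\|u\|=1}\psi_{\Ga_\iota}(u)$ by the concavity of $\psi_{\Ga_\iota}$ and strict convexity of the norm ball (as recalled at \eqref{tg}). Since $\psi_{\Ga_\iota} \circ \theta = \psi_{\Ga_\iota}$ and $\theta$ is an isometry of $\fa$ fixing the norm, $\theta(u_{\Ga_\iota})$ also achieves the supremum, so by uniqueness $\theta(u_{\Ga_\iota}) = u_{\Ga_\iota}$; the only unit vectors in $\fa^+$ fixed by the cyclic permutation $\theta$ are scalar multiples of $(1,\dots,1)$, forcing $u_{\Ga_\iota} = \frac{1}{\sqrt k}(1,\dots,1)$.

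The main obstacle I anticipate is pinning down precisely the statement that $\psi_\Ga$ is natural under automorphisms of $G$ preserving the auxiliary data $(K, A, A^+)$ and under conjugation — this is "obvious" from the definition \eqref{grow3} via the Cartan projection, but one must be careful that $\Phi$ carries the fixed basepoint $o$ to a point in the same $K$-orbit (indeed $\Phi(o) = o$ since $o = (o_1, \dots, o_1)$ is $\theta$-symmetric if we arrange $o_i = o_1$ for all $i$, which we may since all $X_i$ are copies of $\bH^n$) and that the $A$-parametrization is chosen compatibly across the factors, so that $\mu \circ \Phi = \theta \circ \mu$ holds on the nose. Once that bookkeeping is in place the argument is a short formal chain; the only other point requiring a line of care is the conjugacy $\Phi(\Ga_\iota) \sim \Ga_\iota$, where one uses that $\iota^k$ inner implies $\rho_1 \circ \iota^{k} $ is $\so$-conjugate to $\rho_1$ (by faithfulness of $\rho_1$, an inner automorphism of $\Delta$ pushes forward to an inner, hence trivial-in-$\PSL$ up to the given conjugation, automorphism).
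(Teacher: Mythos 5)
Your proposal is correct and follows essentially the same route as the paper: both arguments rest on the observation that cyclically permuting the coordinates of $\Ga_\iota$ amounts to precomposing every $\rho_i$ by $\iota$, which returns the same subgroup up to conjugation in $G$, and then use the equivariance of $\mu$ (hence of $\psi_{\Ga_\iota}$) under coordinate permutation together with conjugation invariance; the paper phrases this via the groups $\Ga^{(n)}=(\prod_i\rho_i\circ\iota^n)(\Delta)$ rather than via your automorphism $\Phi$, but the content is identical, and your concluding uniqueness argument for $u_{\Ga_\iota}$ is the same concavity/strict-convexity argument the paper invokes. Your extra care about the inner-automorphism correction in the last coordinate and about conjugation invariance of the growth indicator is sound and only makes explicit what the paper leaves implicit.
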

 
\begin{proof}
For each $1 \le n \le k$, let $\Ga^{(n)} = (\prod_{i=1}^k \rho_i \circ \iota^n)(\Delta)$. Since $\iota^k = 1$ in $\Out \Delta$, $\Ga^{(n)}$ can be regarded as a group obtained by permuting coordinates in a cyclic way. Hence,
\be \label{eq.symmetry}
\L_{\Ga^{(n)}}  = \theta(\L_{\Ga^{(n-1)}})\;\; \text{ and }\;\; 
\psi_{\Ga^{(n)}}  = \psi_{\Ga^{(n-1)}} \circ \theta^{-1}.
\ee 

However, $\Ga^{(n)} = \Ga_{\iota}$ for all $n$; since applying an automorphism to all coordinates does not change the group. Hence, \eqref{eq.symmetry} implies that $\L_{\Ga_{\iota}}$ and $\psi_{\Ga_{\iota}}$ are invariant under the cyclic permutation $\theta$ of coordinates.
\end{proof}

\subsection*{Examples in $\bH^2\times \bH^2$}
Let us describe some examples to which Lemma \ref{extra} can be applied. We begin in dimension 2.
For a closed orientable surface $S$ of genus $g\ge 2$, one can obtain homeomorphisms $\iota:S\to S$ of order 2 in a number of ways. Figure \ref{fig.iota} indicates how this can be done: Arrange the surface in $\R^3$ so that it is symmetric by a $180^\circ$ rotation. There are several possibilities distinguished by the number of intersection points of the surface with the rotation axis, which yield fixed points of $\iota$.

\begin{figure}[ht]
		\centering
		\begin{tikzpicture}[scale=1.0, every node/.style={scale=0.7}]
		
		\begin{scope}[shift={(-3.75, 0)}, yscale=0.75]
		    \draw (-2, 0) .. controls (-2, 1) and (-1, 0.5) .. (-0.75, 0.5) .. controls (-0.5, 0.5) and (-0.3, 0.75) .. (0, 0.75) .. controls (0.3, 0.75) and (0.5, 0.5) .. (0.75, 0.5) .. controls (1, 0.5) and (2, 1) .. (2, 0);
		    
		    \draw[thick, violet] (-2, 0) .. controls (-2, 0.2) and (-1.4, 0.2) .. (-1.4, 0);
		    \draw[thick, violet, dashed] (-2, 0) .. controls (-2, -0.2) and (-1.4, -0.2) .. (-1.4, 0);

			\draw (-1.5, 0.1) .. controls (-1.4, -0.1) and (-1, -0.1) .. (-0.9, 0.1);
			\draw (-1.4, 0) .. controls (-1.3, 0.1) and (-1.1, 0.1) .. (-1, 0);

			\begin{scope}[rotate=180]
    			\draw (-2, 0) .. controls (-2, 1) and (-1, 0.5) .. (-0.75, 0.5) .. controls (-0.5, 0.5) and (-0.3, 0.75) .. (0, 0.75) .. controls (0.3, 0.75) and (0.5, 0.5) .. (0.75, 0.5) .. controls (1, 0.5) and (2, 1) .. (2, 0);

			    \draw (-1.5, -0.1) .. controls (-1.4, 0.1) and (-1, 0.1) .. (-0.9, -0.1);
				\draw (-1.4, 0) .. controls (-1.3, -0.1) and (-1.1, -0.1) .. (-1, 0);
				
		        \draw[thick, violet, dashed] (-2, 0) .. controls (-2, 0.2) and (-1.4, 0.2) .. (-1.4, 0);
		        \draw[thick, violet] (-2, 0) .. controls (-2, -0.2) and (-1.4, -0.2) .. (-1.4, 0);

			\end{scope}
			
		    \begin{scope}
			    \begin{scope}[shift={(1.2, 0)}]
				    \draw (-1.5, 0.1) .. controls (-1.4, -0.1) and (-1, -0.1) .. (-0.9, 0.1);
				    \draw (-1.4, 0) .. controls (-1.3, 0.1) and (-1.1, 0.1) .. (-1, 0);
			    \end{scope}
			\end{scope}
			
			\begin{scope}[rotate=45]
			\filldraw[white] (0.56, 0.56) circle(2pt);
			\filldraw[white] (0.05, 0.05) circle(2pt);
			\draw[->] (1.5, 1.5) arc(45:380:0.2);
			\filldraw[white] (1.45, 1.5) circle(2pt);
			\draw[thick] (1.75, 1.75) -- (0, 0);
		    \draw[thick, dotted] (-0.1, -0.1) -- (-0.5, -0.5);
		    \draw[thick] (-0.6, -0.6) -- (-1.75, -1.75);
		    
		    \draw (1.6, 1.5) node[right] {$180^{\circ}$};
		    \end{scope}

		\end{scope}
		
		\begin{scope}[yscale=0.75, xscale=1.2, rotate=90]
		    \draw (-2, 0) .. controls (-2, 1) and (-1, 0.5) .. (-0.75, 0.5) .. controls (-0.5, 0.5) and (-0.3, 0.65) .. (0, 0.65) .. controls (0.3, 0.65) and (0.5, 0.5) .. (0.75, 0.5) .. controls (1, 0.5) and (2, 1) .. (2, 0);

		    \begin{scope}[shift={(0.3, 0)}, scale=1.2]
			    \draw (-1.5, 0.05) .. controls (-1.4, -0.1) and (-1, -0.1) .. (-0.9, 0.05);
			    \draw (-1.4, 0) .. controls (-1.3, 0.1) and (-1.1, 0.1) .. (-1, 0);
			\end{scope}
			
			\begin{scope}[shift={(0, 0)}]
			    \draw[violet, thick] (0, 0.65) .. controls (0.2, 0.65) and (0.2, 0.1) .. (0, 0.1);
			\draw[violet, thick, dashed] (0, 0.65) .. controls (-0.2, 0.65) and (-0.2, 0.1) .. (0, 0.1);
			\end{scope}
			
			\begin{scope}[shift={(1.2, 0)}]
			\end{scope}

			\begin{scope}[rotate=180]
    			\draw (-2, 0) .. controls (-2, 1) and (-1, 0.5) .. (-0.75, 0.5) .. controls (-0.5, 0.5) and (-0.3, 0.65) .. (0, 0.65) .. controls (0.3, 0.65) and (0.5, 0.5) .. (0.75, 0.5) .. controls (1, 0.5) and (2, 1) .. (2, 0);
    			
    			\begin{scope}[shift={(0.3, 0)}, scale=1.2]
			        \draw (-1.5, -0.05) .. controls (-1.4, 0.1) and (-1, 0.1) .. (-0.9, -0.05);
				    \draw (-1.4, 0) .. controls (-1.3, -0.1) and (-1.1, -0.1) .. (-1, 0);
				\end{scope}
				
				\draw[violet, thick, dashed] (0, 0.65) .. controls (0.2, 0.65) and (0.2, 0.05) .. (0, 0.05);
		    	\draw[violet, thick] (0, 0.65) .. controls (-0.2, 0.65) and (-0.2, 0.08) .. (0, 0.08);
				
			\end{scope}
			
		    \begin{scope}[scale=1.2]
			    \begin{scope}[shift={(1.2, 0)}]
			        \draw (-1.5, 0.05) .. controls (-1.4, -0.1) and (-1, -0.1) .. (-0.9, 0.05);
				    \draw (-1.4, 0) .. controls (-1.3, 0.1) and (-1.1, 0.1) .. (-1, 0);
				    
			    \end{scope}
			\end{scope}
			
			\begin{scope}[xscale=0.75]
		    \draw[->] (3.3, 0) arc(0:335:0.2);
			\filldraw[white] (3.3, 0) circle(2pt);
		    \draw (3.3, -0.15) node[right] {$180^{\circ}$};
		    \end{scope}
		    
			\begin{scope}
			\draw[thick] (1.75*1.414, 0) -- (2, 0);
			\draw[thick, dotted] (2, 0) -- (1.4, 0);
			\draw[thick] (1.35, 0) -- (0.9, 0);
			\draw[thick, dotted] (0.9, 0) -- (0.2, 0);
			\draw[thick] (0.2, 0) -- (-0.2, 0);
			\draw[thick, dotted] (-0.2, 0) -- (-0.9, 0);
			\draw[thick] (-0.9, 0) -- (-1.35, 0);
			\draw[thick, dotted] (-1.4, 0) -- (-2, 0);
			\draw[thick] (-2, 0) -- (-1.75*1.414, 0);
		    \end{scope}

		\end{scope}
		
		
		\begin{scope}[shift={(3.75, 0)}]
		    
		    \draw (-2, 0) .. controls (-2, 1) and (-1, 0.5) .. (-0.75, 0.5) .. controls (-0.5, 0.5) and (-0.3, 0.65) .. (0, 0.65) .. controls (0.3, 0.65) and (0.5, 0.5) .. (0.75, 0.5) .. controls (1, 0.5) and (2, 1) .. (2, 0);
		   
			\draw[thick, violet] (-2, 0) .. controls (-2, 0.15) and (-1.5, 0.15) .. (-1.5, 0);
		    \draw[thick, violet, dashed] (-2, 0) .. controls (-2, -0.15) and (-1.5, -0.15) .. (-1.5, 0);
		    
		    

			\begin{scope}[rotate=180]
			    \draw (-2, 0) .. controls (-2, 1) and (-1, 0.5) .. (-0.75, 0.5) .. controls (-0.5, 0.5) and (-0.3, 0.65) .. (0, 0.65) .. controls (0.3, 0.65) and (0.5, 0.5) .. (0.75, 0.5) .. controls (1, 0.5) and (2, 1) .. (2, 0);
			    
				\draw[thick, violet, dashed] (-2, 0) .. controls (-2, 0.15) and (-1.5, 0.15) .. (-1.5, 0);
		    \draw[thick, violet] (-2, 0) .. controls (-2, -0.15) and (-1.5, -0.15) .. (-1.5, 0);
			\end{scope}
			
		    \begin{scope}[rotate=0]
			    \begin{scope}[shift={(1.2, 0)}]
				    \draw (-1.5, 0) .. controls (-1.5, 0.2) and (-0.9, 0.2) .. (-0.9, 0) .. controls (-0.9, -0.2) and (-1.5, -0.2) .. (-1.5, 0);
			    \end{scope}
			\end{scope}
			
			\begin{scope}[shift={(-1.2, 0)}]
			    \begin{scope}[rotate=0]
			        \begin{scope}[shift={(1.2, 0)}]
				        \draw (-1.5, 0) .. controls (-1.5, 0.2) and (-0.9, 0.2) .. (-0.9, 0) .. controls (-0.9, -0.2) and (-1.5, -0.2) .. (-1.5, 0);
			        \end{scope}
			    \end{scope}
			\end{scope}
			
			\begin{scope}[shift={(1.2, 0)}]
			    \begin{scope}[rotate=0]
			        \begin{scope}[shift={(1.2, 0)}]
				        \draw (-1.5, 0) .. controls (-1.5, 0.2) and (-0.9, 0.2) .. (-0.9, 0) .. controls (-0.9, -0.2) and (-1.5, -0.2) .. (-1.5, 0);
			        \end{scope}
			    \end{scope}
			\end{scope}

		\end{scope}

		\begin{scope}[shift={(3.75, 0)}, yscale=0.75, rotate=45]
		
			\draw[->] (1.5, 1.5) arc(45:380:0.2);
			\filldraw[white] (1.45, 1.5) circle(2pt);
			\draw[thick] (1.75, 1.75) -- (0.65, 0.65);
			\draw[thick, dotted] (0.6, 0.6) -- (0.15, 0.15);
			\draw[thick] (0.14, 0.14) -- (-0.115, -0.115);
		    \draw[thick, dotted] (-0.15, -0.15) -- (-0.6, -0.6);
		    \draw[thick] (-0.6, -0.6) -- (-1.75, -1.75);
		    
		    \draw (1.6, 1.5) node[right] {$180^{\circ}$};
		    
		\end{scope}

		\end{tikzpicture}
		
	\caption{\small{Examples of involutions $\iota \in \Out \pi_1(S)$ where $S$ is of genus $3$. Indicated curves are mapped to each other by $\iota$.}} \label{fig.iota}
		\end{figure}
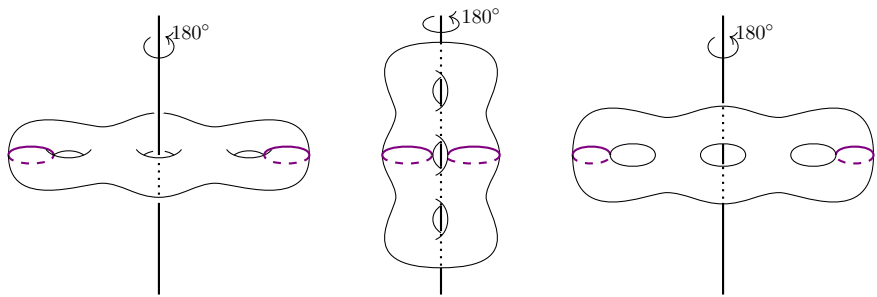

In order for the example $(\rho,\rho\circ \iota)$ not to be trivial, we need the representations not to be conjugate in $\SO^{\circ}(2,1).$ That is, $\rho$ should not represent a point of Teichm\"uller space $\cal{T}(S)$ which is fixed by $\iota$.
This is always possible when $g\ge 3$; to see this, note that there are disjoint, non-homotopic simple closed curves exchanged by $\iota$ in each case. They can be assigned different lengths by a hyperbolic structure, which would then not be fixed by $\iota$.  In genus 2, one just needs to avoid the hyperelliptic involution -- the one with 6 fixed points -- which fixes every point in $\cal{T}(S)$. All other rotations will do. 

\subsection*{Examples in $\bH^3\times \bH^3$}
Examples involving 3-manifolds are also plentiful. Consider for example a ``book of
$I$-bundles'' constructed as follows (see Anderson-Canary \cite{AC}). Let $S_1,\cdots,S_{\ell}$ be ${\ell}$ copies of a surface of
genus $g\ge 1$  with one boundary component and let $Y$ be the 2-complex obtained by
identifying all the boundary circles to one. A choice of cyclic order $c$ on the ${\ell}$ surfaces
determines a thickening of $Y$ to a 3-manifold $N_c$: form $S_i\times [-1,1]$ for each
$i$, and identify the annulus $\partial S_i\times[0,1]$ with $\partial S_j \times [-1,0]$
whenever $j$ follows $i$ in the order $c$ (the identification should take $[0,1]\to
[-1,0]$ by an orientation-reversing homeomorphism, and should respect the original
identification of the boundary circles). See Figure \ref{book}.

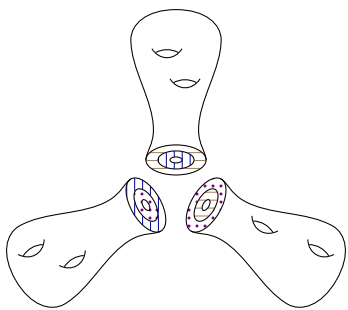
\begin{figure}[ht]
    \centering
	
	\begin{tikzpicture}[scale=0.4, every node/.style={scale=0.4}]
		
		\begin{scope}[shift={(-1, 0)}]
		\begin{scope}[rotate=30]
		\begin{scope}[xscale=0.5]
		
	    	\draw[pattern = vertical lines, pattern color = blue] (0, 0) circle(1);
    		\draw[fill=white, draw=white] (0, 0) circle(0.6);
    		\draw[fill=white, pattern = dots, pattern color = violet] (0, 0) circle(0.6);
    		\draw[fill=white] (0, 0) circle(0.2);
		
    		\draw (0, 1) .. controls (-0.5, 1) and (-1, 0.75) .. (-2, 0.75) .. controls (-4, 0.75) and (-6, 1.5) .. (-8, 1.5) .. controls (-9, 1.5) and (-10, 1) .. (-10, 0);
    		\begin{scope}[yscale=-1]
        		\draw (0, 1) .. controls (-0.5, 1) and (-1, 0.75) .. (-2, 0.75) .. controls (-4, 0.75) and (-6, 1.5) .. (-8, 1.5) .. controls (-9, 1.5) and (-10, 1) .. (-10, 0);
    		\end{scope}
		
    		\begin{scope}[shift={(-1, 0.5)}]
    		    \draw (-8, 0.2) .. controls (-7.5, -0.2) and (-6.5, -0.2) .. (-6, 0.2);
    		    \draw (-7.75, 0.075) .. controls (-7.25, 0.2) and (-6.75, 0.2) .. (-6.25, 0.075);
		    \end{scope}
		    \begin{scope}[shift={(1, -0.5)}]
    		    \draw (-8, 0.2) .. controls (-7.5, -0.2) and (-6.5, -0.2) .. (-6, 0.2);
    		    \draw (-7.75, 0.075) .. controls (-7.25, 0.2) and (-6.75, 0.2) .. (-6.25, 0.075);
    		\end{scope}
		
		\end{scope}
		\end{scope}
		\end{scope}
		
		\begin{scope}[shift={(0, 1.5)}]
		\begin{scope}[rotate=-90]
		\begin{scope}[xscale=0.5]
		
	    	\draw[pattern = horizontal lines, pattern color = orange!40!gray] (0, 0) circle(1);
    		\draw[fill=white, draw=white] (0, 0) circle(0.6);
    		\draw[fill=white, pattern = vertical lines, pattern color = blue] (0, 0) circle(0.6);
    		\draw[fill=white] (0, 0) circle(0.2);
		
    		\draw (0, 1) .. controls (-0.5, 1) and (-1, 0.75) .. (-2, 0.75) .. controls (-4, 0.75) and (-6, 1.5) .. (-8, 1.5) .. controls (-9, 1.5) and (-10, 1) .. (-10, 0);
    		\begin{scope}[yscale=-1]
        		\draw (0, 1) .. controls (-0.5, 1) and (-1, 0.75) .. (-2, 0.75) .. controls (-4, 0.75) and (-6, 1.5) .. (-8, 1.5) .. controls (-9, 1.5) and (-10, 1) .. (-10, 0);
    		\end{scope}
		
    		\begin{scope}[shift={(-1, 0.5)}]
    		\begin{scope}[shift={(-7, -0.3)}]
    		\begin{scope}[xscale=2, yscale=0.5]
       		\begin{scope}[rotate=90]
            \begin{scope}[shift={(6, -0.5)}]
    		    \draw (-8, 0.2) .. controls (-7.5, -0.2) and (-6.5, -0.2) .. (-6, 0.2);
    		    \draw (-7.75, 0.075) .. controls (-7.25, 0.2) and (-6.75, 0.2) .. (-6.25, 0.075);
    		\end{scope}
    		\end{scope}
    		\end{scope}
    		\end{scope}
		    \end{scope}
		    \begin{scope}[shift={(-1, 0.5)}]
    		\begin{scope}[shift={(-5, 0.3)}]
    		\begin{scope}[xscale=2, yscale=0.5]
       		\begin{scope}[rotate=90]
            \begin{scope}[shift={(6, -0.5)}]
    		    \draw (-8, 0.2) .. controls (-7.5, -0.2) and (-6.5, -0.2) .. (-6, 0.2);
    		    \draw (-7.75, 0.075) .. controls (-7.25, 0.2) and (-6.75, 0.2) .. (-6.25, 0.075);
    		\end{scope}
    		\end{scope}
    		\end{scope}
    		\end{scope}
		    \end{scope}

		\end{scope}
		\end{scope}
		\end{scope}
		
		\begin{scope}[shift={(1, 0)}]
		\begin{scope}[rotate=-210]
		\begin{scope}[xscale=0.5]
		
	    	\draw[pattern = dots, pattern color = violet] (0, 0) circle(1);
    		\draw[fill=white, draw=white] (0, 0) circle(0.6);
    		\draw[fill=white, pattern = horizontal lines, pattern color = orange!40!gray] (0, 0) circle(0.6);
    		\draw[fill=white] (0, 0) circle(0.2);
		
    		\draw (0, 1) .. controls (-0.5, 1) and (-1, 0.75) .. (-2, 0.75) .. controls (-4, 0.75) and (-6, 1.5) .. (-8, 1.5) .. controls (-9, 1.5) and (-10, 1) .. (-10, 0);
    		\begin{scope}[yscale=-1]
        		\draw (0, 1) .. controls (-0.5, 1) and (-1, 0.75) .. (-2, 0.75) .. controls (-4, 0.75) and (-6, 1.5) .. (-8, 1.5) .. controls (-9, 1.5) and (-10, 1) .. (-10, 0);
    		\end{scope}
		
    		\begin{scope}[yscale=-1]
    		\begin{scope}[shift={(-1, 0.5)}]
    		    \draw (-8, 0.2) .. controls (-7.5, -0.2) and (-6.5, -0.2) .. (-6, 0.2);
    		    \draw (-7.75, 0.075) .. controls (-7.25, 0.2) and (-6.75, 0.2) .. (-6.25, 0.075);
		    \end{scope}
    		\begin{scope}[shift={(3, 0.25)}]
    		    \draw (-8, 0.2) .. controls (-7.5, -0.2) and (-6.5, -0.2) .. (-6, 0.2);
    		    \draw (-7.75, 0.075) .. controls (-7.25, 0.2) and (-6.75, 0.2) .. (-6.25, 0.075);
    		\end{scope}
    		\end{scope}
		
		\end{scope}
		\end{scope}
		\end{scope}

	\end{tikzpicture}

    \caption{\small{Book of $I$-bundles with three surfaces and patterns indicating the identification.}} \label{book}

\end{figure}

The result $N_c$ is homotopy equivalent to $Y$, and has ${\ell}$ boundary components of genus
$2g$. It admits many convex cocompact hyperbolic structures: it is easy to construct one
``by hand'' by attaching Fuchsian structures along the common boundary using the
Klein-Maskit combination theorem \cite{Mas}.  The Ahlfors-Bers theory parametrizes all
convex cocompact representations as the Teichm\"uller space of $\partial N_c$ (cf. \cite{Mar}). A permutation of $(1,\cdots,{\ell})$ induces a homeomorphism of $Y$ which extends to a
homotopy equivalence of $N_c$ which, if the permutation does not preserve or reverse the cyclic order,
will not correspond to a homeomorphism. Selecting such a permutation of order 2, we have
an automorphism that cannot be an isometry for any hyperbolic structure on $N_c$. (Even if it does correspond to a homeomorphism
one can choose the hyperbolic structure on $N_c$ using a point in ${\mathcal T}(\partial
N_c)$ that is not symmetric with respect to the involution).


\begin{thebibliography}{10}

\bibitem{AC} J. W. Anderson and R. D. Canary.
\newblock{\em Algebraic limits of Kleinian groups which rearrange the pages of a book.}
\newblock{Invent. Math. Vol 126 (1996) 205-214.}

\bibitem{Ben} Y. Benoist.
\newblock{\em Proprietes asymptotiques des groupes lineaires.}
\newblock{Geom. Funct. Anal. (1997), 1-47.}




\bibitem{BP} C. Bishop and Y. Peres.
\newblock{\em Fractals in Probability and Analysis.}
\newblock{Cambridge University Press, Vol. 162 (2017)}


\bibitem{BCLS}  M. Bridgeman, R. Canary, F. Labourie and A. Sambarino.
\newblock{\em The pressure metric for Anosov representations.}
\newblock{Geom. Funct. Anal. 25 (2015), 1089--1179.}
 

\bibitem{Bu} M. Burger.
\newblock{\em Intersection, the Manhattan curve, and Patterson-Sullivan theory in rank 2.}
\newblock{Int. Math. Res. Not. 1993, no. 7, 217-225.}

\bibitem{BLLO} M. Burger,  O. Landesberg, M. Lee and H. Oh.
\newblock{\em The Hopf-Tsuji-Sullivan dichotomy in higher rank and applications to Anosov subgroups.}
\newblock{Journal of Modern Dynamics, Vol 19 (2023), 331-362}




\bibitem{CS}  M. Chow and P. Sarkar.
\newblock{\em Local mixing of one-parameter diagonal flows on Anosov homogeneous spaces.}
\newblock{Preprint, arXiv:2105.11377, To appear in IMRN.}


\bibitem{Co} K. Corlette.
\newblock{\em Hausdorff dimensions of limit sets.}
\newblock{Invent. Math. 102 (1990), 521--541.}



\bibitem{CI} K. Corlette and A. Iozzi.
\newblock{\em Limit sets of discrete groups of isometries of exotic hyperbolic spaces.}
\newblock{Trans. Amer. Math. Soc. (1999), 1507--1530.}


\bibitem{ELO} S. Edwards, M. Lee and H. Oh. 
\newblock{\em Anosov groups: local mixing, counting, and equidistribution.}
\newblock{arXiv:2003.14277, To appear in Geometry \& Topology.} 




 \bibitem{GMT}O. Glorieux, D. Monclair, N. Tholozan.
\newblock{\em Hausdorff dimension of limit sets for projective Anosov representations.}
\newblock{Preprint, arXiv:1902.01844.}  
 




\bibitem{KMO} D. Kim, Y. Minsky, H. Oh.
\newblock{\em Tent property of the growth indicator functions and applications.}
\newblock{Preprint, arXiv:2112.00877.}

\bibitem{GW} O. Guichard and A. Wienhard.
\newblock{\em Anosov representations: Domains of discontinuity and applications.}
\newblock{Invent. Math. 190, Issue 2 (2012), 357-438.}


\bibitem{La} F. Labourie.
\newblock{\em Anosov flows, surface groups and curves in projective space.}
\newblock{Invent. Math.}
\newblock{165 (2006), no. 1, 51-114.}


\bibitem{LO} M. Lee and H. Oh.
\newblock{\em Invariant measures for horospherical actions and Anosov groups.}
\newblock{Preprint, arXiv:2008.05296, to appear in IMRN}  

\bibitem{LO2} M. Lee and H. Oh.
\newblock{\em Ergodic decompositions of geometric measures on Anosov homogeneous spaces.}
\newblock{Preprint, arXiv:2010.11337, To appear in Israel J. Math.}  


\bibitem{Mar} A. Marden.
\newblock{\em Hyperbolic manifolds: an introduction in 2 and 3 dimensions.}
\newblock{Cambridge University Press, 2016.}

\bibitem{Mas} B. Maskit.
\newblock{\em Kleinian groups.}
\newblock{Springer-Verlag, 1988.}



\bibitem{Pa} 
S. Patterson.
\newblock{\em The limit set of a Fuchsian group.}
\newblock{Acta Math. 136 (1976),  241--273.}



\bibitem{SP} 
 R. Potrie and A. Sambarino. \newblock{\em  Eigenvalues and entropy of a Hitchin representation.}
\newblock{ Invent. Math. 209, issue 3 (2017), pp. 885-925.}




\bibitem{PSW2} B. Pozzetti, A. Sambarino and A. Wienhard.
\newblock{\em Conformality for a robust class of non-conformal attractors.}
\newblock{arXiv:1902.01303, To appear in Crelle.}


     
\bibitem{Quint1}
J.-F. Quint.
\newblock{\em Divergence exponentielle des sous-groupes discrets en rang sup\'erieur.}
\newblock{Comment. Math. Helv. 77 (2002), no. 3, 563-608.}

\bibitem{Quint2} 
J.-F. Quint.
\newblock{\em Mesures de Patterson-Sullivan en rang sup\'erieur.}
\newblock{Geom. Funct. Anal. 12 (2002), no. 4, 776-809.}



\bibitem{Samb3} A. Sambarino.
\newblock{\em Hyperconvex representations and exponential growth.}
\newblock{ Ergodic Theory Dynam. Systems 34 (2014), no. 3, 986 -1010.}

\bibitem{Samb1} A. Sambarino.
\newblock{\em Quantitative properties of convex representations.}
\newblock{Comm. Math. Helv., 89(2014), 443–488.}


\bibitem{Samb} A. Sambarino.
\newblock{\em The orbital counting problem for hyperconvex representations.}
\newblock{Ann. Inst. Fourier (Grenoble) 65 (2015), no. 4, 1755-1797.}  
 

 

 \bibitem{Su} D. Sullivan.
 \newblock{\em The density at infinity of a discrete subgroup of hyperbolic motions.}
\newblock{Publ. IHES (1979), 171-202} 
 


\end{thebibliography}
\end{document}